\documentclass[11pt]{amsart}

\usepackage{amssymb}
\usepackage[all,cmtip]{xy}
\usepackage{mathrsfs}

\newtheorem{theorem}{Theorem}[section]
\newtheorem{lem}[theorem]{Lemma}
\newtheorem{cor}[theorem]{Corollary}
\newtheorem{prop}[theorem]{Proposition}

\theoremstyle{definition}
\newtheorem{defi}[theorem]{Definition}
\newtheorem{example}[theorem]{Example}

\theoremstyle{remark}
\newtheorem{rem}[theorem]{Remark}

\newcommand{\dstirling}[2]{\genfrac{[}{]}{0pt}{0}{#1}{#2}}   

\newcommand{\BC}{\mathbb{C}}            
\newcommand{\BZ}{\mathbb{Z}}             
\newcommand{\BN}{\mathbb{N}}            

\newcommand{\Glie}{\mathfrak{g}}             
\newcommand{\Hlie}{\mathfrak{h}}          
\newcommand{\qaf}{U_q(\hat{\mathfrak{g}})}    
\newcommand{\Gaff}{\hat{\mathfrak{g}}}   

\newcommand{\BY}{Y^{\mathrm{ab}}}   
\newcommand{\CR}{\mathcal{R}}    
\newcommand{\BR}{\mathbf{R}}      
\newcommand{\ttimes}{\,\overline{\otimes}\,}    

\newcommand{\BQ}{\mathbf{Q}}                
\newcommand{\BP}{\mathbf{P}}            
\newcommand{\wt}{\mathrm{wt}}         
\newcommand{\Bp}{\mathbf{p}}    

\usepackage{color}

\allowdisplaybreaks                

\title[unitriangular R-matrices and Theta series]{unitriangular R-matrices of quantum affine algebras and Yangians via Theta series}
\author{Huafeng Zhang}
\address{HZ: CNRS, UMR 8524-Laboratoire Paul Painlev\'e, Univ. Lille, F-59000 Lille, France}
\email{huafeng.zhang@univ-lille.fr}

\begin{document}
\begin{abstract} The universal R-matrix of the quantum affine algebra associated to a finite-dimensional simple complex Lie algebra admits a Gauss decomposition into an uper unitriangular part, an abelian part, and a lower unitriangular part.  In this paper, we provide a simple conjugation formula for the unitriangular R-matrices with one tensor factor evaluated at an arbitrary finite-dimensional representation of the quantum affine algebra. Our formula involves the T-series of Frenkel--Hernandez and the Theta series introduced in a previous work.

We also extend our conjugation formula to the Yangian case, making use of associators for triple tensor product representations of shifted Yangians.  \end{abstract}

\maketitle


\section{Introduction}

Fix $\Glie$ to be a finite-dimensional simple complex Lie algebra and $q \in \BC^{\times}$ that is not a root of unity. Consider the untwisted quantum affine algebra $\qaf$ of zero central charge and without derivation operator. As a Drinfeld--Jimbo quantum group, it is a quasi-triangular Hopf algebra and admits therefore a universal R-matrix in a suitable completion of the tensor square of $\qaf$. 

The Hopf algebra $\qaf$ possesses a $\BC^{\times}$-action by dilatation, so that its universal R-matrix can be upgraded to a power series $R(z)$ in a formal variable $z$ with coefficients in the completed tensor square. We avoid the completion by evaluating one tensor factor of the power series at a finite-dimensional representation $\varphi: \qaf \longrightarrow \mathrm{End}(W)$. The resulting series in the ordinary tensor products are called {\it L-operators}:
$$ (\varphi \otimes \mathrm{Id})(R(z)) \in \mathrm{End}(W) \otimes \qaf[[z]], \qquad (\mathrm{Id}\otimes \varphi)(R(z)) \in \qaf[[z]] \otimes \mathrm{End}(W) $$
By the defining properties of the universal R-matrix, each L-operator satisfies a modified version of the quantum Yang--Baxter equation. Among its main applications let us mention: the formulation and proof of Knizhnik--Zamolodchikov difference equations for vertex operators between highest weight representations of the quantum affine algebra \cite{FR0}; the construction of quantum integrable models generalizing the six-vertex model and Baxter polynomiality for their spectra \cite{FR1,FH}; the isomorphism from R-matrix realization of the quantum affine algebra to its Drinfeld new realization \cite{FM,JLM1,JLM2} that is inverse to the Ding--Frenkel isomorphism \cite{DF}.  

Khoroshkin--Tolstoy \cite{TK} and Damiani \cite{Damiani} computed the universal R-matrix as infinite ordered products. From their formulas we observe a triple product decomposition
$$ R(z) = R^+(z) \times R^0(z) \times R^-(z). $$
Recall that the quantum affine algebra is weight graded by the root lattice $\BQ$ of the underlying simple Lie algebra $\Glie$. 
At the right-hand side, the third factor $R^-(z)$ is {\it lower unitriangular} in the sense that it is an infinite sum $\sum_{\beta \in \BQ_+} R^-_{\beta}(z)$ over the positive root cone $\BQ_+$, where each component $R_{\beta}^-(z)$ is a power series in $z$ with coefficients in the tensor product $\qaf_{-\beta} \otimes \qaf_{\beta}$ and $R_0^-(z) = 1 \otimes 1$. Likewise, the first factor $R^+(z)$ is upper unitriangular. The middle factor $R^0(z)$ has coefficients in the completed tensor square of a commutative subalgebra of $\qaf$, the Drinfeld--Cartan subalgebra. 

 Both unitriangular factors $R^{\pm}(z)$ are infinite ordered products of $q$-exponentials. Understanding their evaluations $(\varphi\otimes \psi)(R^{\pm}(z))$ at two representations $\varphi$ and $\psi$ of the quantum affine algebra is a highly non-trivial task. In the case $\Glie = \mathfrak{sl}_2$, explicit computations for Verma modules were performed by Khoroshkin--Stolin--Tolstoy \cite{KST}, and integral formulas for $R^{\pm}(z)$ were obtained by Ding--Pakuliak--Khoroshkin \cite{DPK}. For simply-laced $\Glie$ and suitably chosen finite-dimensional irreducible representations, up to Drinfeld--Cartan factors, Okounkov--Smirnov \cite{OS} identified $(\varphi\otimes \psi)(R^{\pm}(z))$ with stable envelopes in the equivariant K-theory of Nakajima quiver varieties. In general types, Hernandez \cite{H1} proposed the notion of algebraic stable map and showed that $(\varphi \otimes \psi)(R^{\pm}(z))$ for a large class of representations (including all the finite-dimensional irreducible ones) are typical examples of algebraic stable maps.    

 \medskip

 The main result of this paper is a simple conjugation formula for unitriangular L-operators associated to an arbitrary finite-dimensional representation $\varphi$ of $\qaf$.
 We shall explain the formula for $(\mathrm{Id} \otimes \varphi)(R^-(z))$, as the other cases are parallel. 
 
 Let $r$ be the rank of the simple Lie algebra $\Glie$. In the Drinfeld new realization, the quantum affine algebra has four families of generating series:
 $$ x_i^{\pm}(z) \in \qaf[[z,z^{-1}]] \quad \textrm{and} \quad \phi_i^{\pm}(z) \in \qaf[[z^{\pm 1}]]\quad \textrm{for $1\leq i \leq r$}. $$
 The coefficients of the power series $\phi_i^{\pm}(z)$ generate the aforementioned commutative Drinfeld--Cartan subalgebra. To a tuple $\Bp = (\Bp_i(z))_{1\leq i \leq r}$ of complex polynomials of constant term 1, Frenkel--Hernandez \cite{FH} attached a power series $T_{\Bp}(z)$ in $z$ of constant term 1 and with coefficients in the Drinfeld--Cartan subalgebra. Compared to $\phi$-series, T-series satisfy easier commutation relations with the generating series:
 $$ T_{\Bp}(z) x_i^-(w) = \Bp_i(\frac{z}{w})x_i^-(w) T_{\Bp}(z), \quad x_i^+(w) T_{\Bp}(z) = \Bp_i(\frac{z}{w}) T_{\Bp}(z) x_i^+(w).  $$
 Define the Theta series $\Theta_{\Bp}(z)$ by factorizing the coproduct \cite{Z2}
 $$ \Delta(T_{\Bp}(z)) = (1 \otimes T_{\Bp}(z)) \times \Theta_{\Bp}(z) \times (T_{\Bp}(z) \otimes 1) \in \qaf^{\otimes 2}[[z]]. $$
 As in the case of the lower unitriangular R-matrix $R^-(z)$, the Theta series is an infinite sum $\sum_{\beta \in \BQ_+} \Theta_{\Bp,\beta}(z)$ of power series in $z$ with coefficients in $\qaf_{-\beta}\otimes \qaf_{\beta}$ and its zero component is $1\otimes 1$. However, a sharp difference is that each component $\Theta_{\Bp,\beta}(z)$ is a {\it polynomial} in $z$, while for $R_{\beta}^-(z)$ this is rarely true.   

The only reason why we need the representation $\varphi$ to be finite-dimensional is the existence of a tuple $\Bp$ of polynomials such that, for each $1\leq i \leq r$, the formal Laurent series $\varphi(x_i^+(z))$ is annihilated by the Laurent polynomial $\Bp_i(z^{-1})$. Our conjugation formula then reads as follows (Theorem \ref{thm: conjugation formula qaf}) 
 $$ (\mathrm{Id}\otimes \varphi)(R^-(z)) = (T_{\Bp}(z) \otimes 1)^{-1} \times (\tau_z \otimes \varphi)(\Theta_{\Bp}(1)) \times (T_{\Bp}(z) \otimes 1).  $$
 In the middle term at the right-hand side, $\tau_z$ is the algebra homomorphism from $\qaf$ to $\qaf[z,z^{-1}]$ arising from the $\BC^{\times}$-action on the quantum affine algebra, and $\Theta_{\Bp}(1)$ is the Theta series $\Theta_{\Bp}(z)$ evaluated at $z = 1$, which is well-defined by polynomiality. 

Our conjugation formula follows from the polynomiality of Theta series \cite{Z2} and an intertwining property of the unitriangular R-matrices due to Khoroshkin--Tolstoy \cite{KT} and Enriquez--Khoroshkin--Pakuliak \cite{EKP}: the Drinfeld--Jimbo coproduct conjugated by $R^-(z)$ is a Drinfeld formal coproduct under which T-series are grouplike. 

As a direct application of the conjugation formula, we establish the rationality of $(\varphi\otimes \psi)(R^{\pm}(z))$ for arbitrary finite-dimensional representations $\varphi$ and $\psi$ and estimate their poles in terms of annihilators of the formal Laurent series $\varphi(x_i^-(z))$ and $\psi(x_i^+(z))$.

Consider the Yangian $Y(\Glie)$ associated to the simple Lie algebra $\Glie$. Its universal R-matrix has a triple product decomposition into two unitriangular factors and an abelian factor, as shown by Gautam--Toledano Laredo--Wendlandt \cite{GTLW}. Our conjugation formula has a natural Yangian counterpart. Its proof utilizes representation theory of shifted Yangians, notably the non-strict associators for triple tensor products.  

\medskip

One may think of Theta series as a simplified version of unitriangular R-matrices. In many situations, the former plays the role of the latter as indicated by the conjugation formulas and by our previous work \cite{Z2} on R-matrices of asymptotic representations. It is desirable to have simpler factorization formulas for Theta series than those of Khoroshkin--Tolstoy and Damiani for the universal R-matrix of the quantum affine algebra. As shown in \cite{Milot}, Theta series are {\it finite} ordered products of $q$-exponentials for $U_q(\widehat{\mathfrak{sl}}_2)$ and $ U_q(\widehat{\mathfrak{sl}}_3)$, and of ordinary exponentials for $Y(\mathfrak{sl}_{r+1})$. We comment that no such factorization formula is known for the Yangian universal R-matrix \cite{GTLW,AGW}.

In the proof of Yangian conjugation formula, we extend unitriangular R-matrices to shifted Yangians. Recently, a family of representations of shifted Yangians were obtained on critical cohomologies of framed quiver varieties with potentials \cite{COZZ}. We expect the evaluations of unitriangular R-matrices at these representations to be critical stable envelopes therein after a Drinfeld--Cartan normalization. In another direction, the abelian factor in the full R-matrices of representations of shifted Yangians is missing. Similar questions can be posed for shifted quantum affine algebras \cite{FT}.

\medskip

This paper is organized as follows. Section \ref{sec: qaf} collects basic properties of the quantum affine algebra and its universal R-matrix. In Section \ref{sec: conjugation formula qaf} we prove the conjugation formula for unitriangular L-operators of the quantum affine algebra. In Section \ref{sec: shifted Yangians} we recall known facts on representation theory of shifted Yangians:  Theta series and R-matrices for one-dimensional representations. Section \ref{sec: spectral parameter coproduct} studies the compatibility between shifted coproducts and spectral parameter automorphisms. We extend unitriangular R-matrices to shifted Yangians in Section \ref{sec: uni R shifted Yangian} and then prove the conjugation formula for unitriangular L-operators of shifted Yangians in Section \ref{se: conjugation formula shifted Yangians}. 

\medskip

\noindent {\bf Acknowledgments:} The author wishes to thank Andrea Appel and Jae-Hoon Kwon for useful discussions. He acknowledges the support of the CDP C2EMPI, as well as the French State under the France-2030 programme, the University of Lille, the Initiative of Excellence of the University of Lille, the European Metropolis of Lille for their funding and support of the R-CDP-24-004-C2EMPI project.

\section{Generalities on quantum affine algebras} \label{sec: qaf}
In this section, we review the basic properties of quantum affine algebras: two equivalent realizations, universal R-matrix, modified Drinfeld--Cartan series and Theta series. The ground field is $\BC$, and $\BN := \BZ_{\geq 0}$.

Fix $\Glie$ to be a finite-dimensional simple Lie algebra. Let $\Hlie$ be a Cartan subalgebra of $\Glie$, and $I := \{1,2,\cdots,r\}$ be the set of Dynkin nodes. The dual Cartan subalgebra $\Hlie^*$ admits a basis of {\it simple roots} $(\alpha_i)_{i \in I}$ and a non-degenerate invariant symmetric bilinear form $(,): \Hlie^* \times \Hlie^* \longrightarrow \BC$ normalized in such a way that the $d_i := \frac{(\alpha_i,\alpha_i)}{2}$ for $i \in I$ are co-prime positive integers in $\{1,2,3\}$. We have the Cartan matrix $(c_{ij})_{i,j\in I}$ and the symmetrized Cartan matrix $(b_{ij})_{i,j\in I}$ with integer entries defined by
$$c_{ij} := \frac{2(\alpha_i,\alpha_j)}{(\alpha_i,\alpha_i)},\quad b_{ij} := (\alpha_i,\alpha_j) \quad \textrm{for $i, j \in I$}. $$
We shall need the root lattice in $\Hlie^*$ and its cones 
$$ \BQ := \bigoplus_{i\in I} \BZ \alpha_i,\quad \BQ_+ := \sum_{i\in I} \BN \alpha_i,\quad \BQ_- := -\BQ_+.  $$

Fix $q \in \BC^{\times}$, which is not a root of unity. For $t \in \BZ$ and $n \in \BN$, we set
$$  [t]_q := \frac{q^t-q^{-t}}{q-q^{-1}}, \qquad \dstirling{t}{n}_q := \prod_{m=1}^n [t-m+1]_q,\qquad (t)_q := \frac{q^{2t}-1}{q^2-1}.  $$
Set $q_i := q^{d_i}$ for $i \in I$.
It is known that the $I\times I$-matrix $([b_{ij}]_q)_{i,j\in I}$ is invertible. Let $(\widetilde{B}_{ij}(q))_{i,j\in I}$ denote its inverse. 

\subsection{The quantum affine algebra}
Let $\theta = \sum_{i=1}^r a_i \alpha_i \in \BQ_+$ be the highest positive root of $\mathfrak{g}$.
We enlarge the Cartan matrix $(c_{ij})_{1\leq i,j \leq r}$ to a non-twisted affine Cartan matrix $(c_{ij})_{0\leq i,j\leq r}$ as follows:
$$c_{00} := 2, \quad c_{i0} = -\frac{2(\alpha_i, \theta)}{(\alpha_i, \alpha_i)},\quad  c_{0i} := -\frac{2(\theta, \alpha_i)}{(\theta, \theta)}\quad \mathrm{for}\ 1 \leq i \leq r. $$
 Define $q_0 := q^{d_0}$ where $d_0 := \frac{(\theta,\theta)}{2}$.
 
In the Drinfeld--Jimbo realization, the {\it quantum affine algebra} of zero central charge and without derivation operator, denoted by $\qaf$, is the associative algebra generated by $E_i, F_i, K_i^{\pm 1}$ for $0 \leq i \leq r$ and subject to the following relations for $0 \leq i, j \leq r$:
\begin{gather*}
 K_iK_i^{-1} = 1 = K_i^{-1}K_i,\quad K_i K_j = K_j K_i, \quad K_0 K_1^{a_1} K_2^{a_2} \cdots K_r^{a_r} = 1, \\
K_i E_j = q_i^{c_{ij}} E_j K_i,\quad K_i F_j = q_i^{-c_{ij}} F_j K_i, \quad [E_i, F_j] = \delta_{ij} \frac{K_i - K_i^{-1}}{q_i-q_i^{-1}}, \\
\sum_{s=0}^{1-c_{ij}} (-1)^s \dstirling{1-c_{ij}}{s}_{q_i} E_i^{1-c_{ij}-s} E_j E_i^s = 0 = \sum_{s=0}^{1-c_{ij}} (-1)^s \dstirling{1-c_{ij}}{s}_{q_i} F_i^{1-c_{ij}-s} F_j F_i^s  \quad \textrm{if $i \neq j$}.
\end{gather*}
It has a Hopf algebra structure with the coproduct given by:
\begin{equation*} 
\Delta(E_i) = E_i \otimes 1 + K_i \otimes E_i,\quad \Delta(F_i) = 1 \otimes F_i + F_i \otimes K_i^{-1},\quad \Delta(K_i) = K_i \otimes K_i.
\end{equation*}

With respect to the conjugate action of the Cartan generators $K_i$ for $1\leq i\leq r$, the Hopf algebra $\qaf$ admits a $\BQ$-grading, called {\it weight grading}: an element $x \in \qaf$ is of weight $\beta \in \BQ$, and we write $\wt(x) = \beta$, if $K_i x K_i^{-1} = q^{(\alpha_i,\beta)} x$ for $1\leq i \leq r$. Let $\qaf_{\beta}$ denote the subspace of elements in $\qaf$ of weight $\beta$.

The Hopf algebra $\qaf$ is also $\BZ$-graded by declaring the generator $E_0$ to be of degree 1, the generator $F_0$ to be of degree $-1$, and all the other Drinfeld--Jimbo generators to be of degree 0. 
Let $z$ be a formal variable, called {\it spectral parameter}. The $\BZ$-grading induces an algebra homomorphism
\begin{equation}   \label{def: spectral para qaf}
    \tau_z: \qaf \longrightarrow \qaf[z,z^{-1}], \quad x \mapsto z^{\deg (x)} x \quad \textrm{for $x$ homogeneous}.
\end{equation}
One specializes $z$ at nonzero complex numbers to get a one-parameter family $(\tau_a)_{a \in \BC^{\times}}$ of Hopf algebra automorphisms of $\qaf$. By replacing $z^{\deg(x)}$ with $z^{-\deg(x)}$, we get another algebra homomorphism from $\qaf$ to $\qaf[z,z^{-1}]$, denoted by $\tau_{z^{-1}}$.

The quantum affine algebra $\qaf$ has a second presentation, called Drinfeld new realization \cite{Dri88}. Its generators are $x_{i,m}^{\pm}$ and $\phi_{i,m}^{\pm}$ for $(i,m) \in I \times \BZ$. Its defining relations are as follows for $(i,j,m,n) \in I^2 \times \BZ^2$ and $\varepsilon \in \{+, -\}$:
\begin{gather*} 
\phi_{i,m}^+ = 0 \ \mathrm{if}\ m < 0,\quad \phi_{i,m}^- = 0 \ \mathrm{if}\ m > 0, \quad \phi_{i,0}^+ \phi_{i,0}^- = 1,  \\
[\phi_{i,m}^{\pm}, \phi_{j,n}^{\varepsilon}] = 0, \quad [x_{i,m}^+, x_{j,n}^-] = \delta_{ij} \frac{\phi_{i,m+n}^+-\phi_{i,m+n}^-}{q_i-q_i^{-1}},   \\
\phi_{i,m+1}^{\varepsilon} x_{j,n}^{\pm} -q^{\pm b_{ij}} \phi_{i,m}^{\varepsilon} x_{j,n+1}^{\pm} = q^{\pm b_{ij}} x_{j,n}^{\pm} \phi_{i,m+1}^{\varepsilon} - x_{j,n+1}^{\pm} \phi_{i,m}^{\varepsilon}, \\
x_{i,m+1}^{\pm} x_{j,n}^{\pm} -q^{\pm b_{ij}} x_{i,m}^{\pm} x_{j,n+1}^{\pm} = q^{\pm b_{ij}} x_{j,n}^{\pm} x_{i,m+1}^{\pm} - x_{j,n+1}^{\pm} x_{i,m}^{\pm},  \\
\sum_{s=0}^{1-c_{ij}} (-1)^s \dstirling{1-c_{ij}}{s}_{q_i}(x_{i,0}^{\pm})^{1-c_{ij}-s} x_{j,0}^{\pm} (x_{i,0}^{\pm})^s = 0 \quad \mathrm{if}\ i \neq j. 
\end{gather*}
Beck \cite{Beck} constructed a surjective algebra homomorphism from the Drinfeld new realization of $\qaf$ to its Drinfeld--Jimbo realization. Later Damiani \cite{Damiani2} completed the injectivity. Under the correspondence of Beck, we have $$E_i = x_{i,0}^+, \quad F_i = x_{i,0}^-,\quad K_i = \phi_{i,0}^+ \quad \textrm{for $1\leq i \leq r$}.$$
The weight grading and the $\BZ$-grading in terms of Drinfeld generators are given by
\begin{gather*}
 \wt(x_{i,m}^{\pm}) = \pm\alpha_i, \quad  \wt(\phi_{i,m}^{\pm}) = 0, \quad \deg(x_{i,m}^{\pm}) = \deg(\phi_{i,m}^{\pm}) = m.
\end{gather*}

Define five subalgebras of $\qaf$ by generating subsets:
\begin{gather*}
    U_q^+(\Gaff) := \langle x_{i,m}^+ \rangle_{(i,m)\in I \times \BZ}, \quad U_q^0(\Gaff) := \langle \phi_{i,m}^{\pm} \rangle_{(i,m)\in I \times \BZ}, \quad U_q^-(\Gaff) := \langle x_{i,m}^-\rangle_{(i,m) \in I \times \BZ}, \\
    U_q^{\geq}(\Gaff) := \langle x_{i,m}^+, \phi_{i,m}^{\pm} \rangle_{(i,m)\in I\times \BZ}, \quad U_q^{\leq}(\Gaff) := \langle x_{i,m}^-, \phi_{i,m}^{\pm} \rangle_{(i,m)\in I\times \BZ}.
\end{gather*}
The weight grading and the $\BZ$-grading restrict to these subalgebras. The subalgebra $U_q^0(\Gaff)$ is commutative, and commonly called the {\it Drinfeld--Cartan subalgebra}. 

Let $u$ be a formal variable. Define the generating series of $\qaf$ by:
\begin{equation}  \label{def: gene series qaf}
    x_i^{\pm}(u) := \sum_{m\in \BZ} x_{i,m}^{\pm} u^m, \quad \phi_i^{\pm}(u) := \sum_{m\in \BZ} \phi_{i,m}^{\pm} u^m \quad \textrm{for $i \in I$}.
\end{equation}
Then $\phi_i^{\pm}(u)$ are power series in $u^{\pm 1}$ with invertible constant terms $\phi_{i,0}^{\pm}$ and with coefficients in the commutative Drinfeld--Cartan subalgebra. Define the elements $h_{i,s}$ of the Drinfeld--Cartan subalgebra, for $i \in I$ and $s \in \BZ_{\neq 0}$, by the equations
\begin{equation*}  
\phi_i^{\pm}(u) =  \phi_{i,0}^{\pm} \exp\left(\pm (q-q^{-1}) \sum_{\pm s>0} h_{i,s} u^s\right) \in U_q^0(\Gaff)[[z^{\pm 1}]].
\end{equation*}
We follow the convention of \cite[(2.2)]{FR1} so that our $h_{i,s}$ is $[d_i] h_{i,s}$ in \cite{Beck}. 
\subsection{Gauss decomposition of the universal R-matrix} The quantum affine algebra $\qaf$ is a quasi-triangular Hopf algebra. In this subsection, we collect basic properties of its universal R-matrix $R_{\Gaff}$ from \cite{Damiani, KT, FM, EKP}. 

By \cite[Theorem 2]{Damiani}, the universal R-matrix $R_{\Gaff}$ is a quadruple product
$$ R_{\Gaff} = R^+ \times R^0 \times  R^- \times q^{-t_{\infty}} $$
in a completion of the tensor square of $\qaf$, which we do need here.
At the right hand side, the first three factors $R^+,\, R^0$ and $R^-$ arise from the linearly ordered set of positive roots of the affine Kac--Moody algebra $\Gaff$ partitioned into three subsets in \cite[Definition 4]{Damiani}, and $t_{\infty}$ in the fourth factor is the canonical element associated with the bilinear form on $\Hlie$. Our primary concern in this paper is $R^{\pm}$. Their precise formulas will be recalled in the proof of Proposition \ref{prop: unitriangular R qaf} below and will not be used elsewhere.

Following \cite[(2.25)]{KST}, we move the fourth $q^{-t_{\infty}}$ to the second factor $R^0$ and obtain a triple product, which is called a {\it Gauss decomposition} as in \cite{DF}:
$$ R_{\Gaff} = R^+ \times (R^0 q^{-t_{\infty}}) \times (q^{t_{\infty}} R^- q^{-t_{\infty}}).  $$
Add a spectral parameter $z$ by applying $\tau_z$ of Eq.\eqref{def: spectral para qaf} to the first tensor factors:
$$ R^+(z) := (\tau_z \otimes \mathrm{Id})(R^+), \quad R^0(z) := (\tau_z \otimes \mathrm{Id})(R^0),\quad \BR^-(z) := (\tau_z \otimes \mathrm{Id})(q^{t_{\infty}} R^- q^{-t_{\infty}}).  $$
Let $\BR^+(z) := R_{21}^+(z)$ be obtained from $R^+(z)$ by permuting its two tensor factors; the flip will simplify the formulas in Proposition \ref{prop: unitriangular R qaf} below. Then we have:
\begin{equation}  \label{equ: Gauss qaf}
    (\tau_z \otimes \mathrm{Id})(R_{\Gaff}) = \BR_{21}^+(z) \times R^0(z) q^{-t_{\infty}} \times \BR^-(z).
\end{equation}

Let us make sense of the space where the factors $\BR^{\pm}(z)$ live.
\begin{defi}\label{defi: completion qaf}
    Given two $\BQ$-graded algebras $\mathbb{A} = \oplus_{\alpha\in \BQ} \mathbb{A}_{\alpha}$ and $\mathbb{B} = \oplus_{\alpha\in \BQ} \mathbb{B}_{\alpha}$, we define their {\it 0-completed tensor product} $\mathbb{A}\ttimes \mathbb{B}$ to be the following vector space
    $$ \mathbb{A}\ttimes \mathbb{B} :=  \prod_{\beta \in \BQ_+} (\mathbb{A}_{-\beta} \otimes \mathbb{B}_{\beta}) \subset \prod_{\alpha,\beta \in \BQ} \mathbb{A}_{\alpha} \otimes \mathbb{B}_{\beta}.$$
    It is naturally an algebra that contains the subalgebra $\sum_{\beta\in \BQ_+} \mathbb{A}_{-\beta} \otimes \mathbb{B}_{\beta}$ of the ordinary tensor product algebra $\mathbb{A} \otimes \mathbb{B}$.
\end{defi}
We use 0-completion to distinguish with the  completion $\mathbb{A}\, \widetilde{\otimes}\,\mathbb{B}$ of the {\it full} tensor product algebra $\mathbb{A} \otimes \mathbb{B}$ defined in \cite[\S 8.3]{Z2}. The full completion contains the 0-completion. In this paper, working with the 0-completion is sufficient.

The next result, due to \cite{Damiani,Damiani2,KT,EKP}, summarizes the key properties of $\BR^{\pm}(z)$ to be used later on. We provide relevant details with references in the proof.  
\begin{prop}\label{prop: unitriangular R qaf}
\begin{itemize}
    \item[(1)] Both $\BR^{\pm}(z)$ are power series in $z$ with coefficients in the 0-completed tensor product $U_q^-(\Gaff)\ttimes U_q^+(\Gaff)$, and are {\it unitriangular}:
\begin{gather*}
    \BR^{\pm}(z) = \sum_{\beta \in \BQ_+} \BR_{\beta}^{\pm}(z) \quad \textrm{such that} \\
    \BR_{\beta}^{\pm}(z) \in (U_q^-(\Gaff)_{-\beta} \otimes U_q^+(\Gaff)_{\beta})[[z]] \quad \textrm{for}\quad \beta \in \BQ_+ \quad \textrm{and}\quad \BR_0^{\pm}(z)  = 1 \otimes 1.
\end{gather*}
\item[(2)] For $i \in I$ and $s\in \BZ_{\neq 0}$, we have the four intertwining equations of Laurent series in $z$ with coefficients in the 0-completed tensor product $U_q^{\leq}(\Gaff)\ttimes U_q^{\geq}(\Gaff)$: 
\begin{align*}
    (h_{i,s}\otimes z^s + 1 \otimes h_{i,s}) \times \BR^-(z) &= \BR^-(z) \times (\tau_z \otimes \mathrm{Id})\circ \Delta(h_{i,s}), \\
    \BR^+(z) \times (h_{i,s} \otimes 1 + z^s \otimes h_{i,s}) &= (\mathrm{Id} \otimes \tau_z)\circ \Delta(h_{i,s}) \times \BR^+(z), \\
    (h_{i,s} \otimes 1 + z^{-s} \otimes h_{i,s}) \times \BR^-(z) &= \BR^-(z) \times (\mathrm{Id}\otimes \tau_{z^{-1}})\circ \Delta(h_{i,s}), \\
    \BR^+(z) \times (h_{i,s} \otimes z^{-s} + 1\otimes h_{i,s}) &= (\tau_{z^{-1}} \otimes \mathrm{Id})\circ \Delta(h_{i,s}) \times \BR^+(z).
\end{align*}
\end{itemize}    
\end{prop}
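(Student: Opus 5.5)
The plan is to read both parts off Damiani's explicit product formula for $R_{\Gaff}$ and then to invoke the intertwining results of Khoroshkin--Tolstoy and Enriquez--Khoroshkin--Pakuliak. No new computation is needed beyond careful bookkeeping of conventions.

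For part (1), we write $R^+$ and $R^-$ as the ordered products of $q$-exponentials from \cite[Theorem 2]{Damiani}. Each factor has the form $\exp_{q_\gamma}\bigl((q_\gamma^{-1}-q_\gamma)E_\gamma\otimes F_\gamma\bigr)$, where $\gamma$ runs over the real affine roots $\alpha+m\delta$ in the two relevant subsets. These are the roots with $\alpha\in\Delta_+$ and $m\geq 0$ for $R^+$, and with $\alpha\in\Delta_-$ and $m>0$ for $R^-$.

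\begin{itemize}
\item[(a)] By Beck's identification of root vectors with Drinfeld generators \cite{Beck,Damiani2}, the root vectors lie in the Drinfeld half-subalgebras. Specifically, $E_{\alpha+m\delta}\in U_q^+(\Gaff)_{\alpha}$ of degree $m$ for $\alpha>0$, and $E_{-\alpha+m\delta}\in U_q^-(\Gaff)_{-\alpha}$ of degree $m$. The $F$'s behave symmetrically.
\item[(b)] After the flip $\BR^+=R^+_{21}$, both $\BR^+(z)$ and $\BR^-(z)$ have first tensor factor in $U_q^-$ and second in $U_q^+$. For $\BR^-$ we also conjugate by $q^{t_\infty}$, which only rescales a weight-$(-\beta,\beta)$ term by Cartan factors. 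We need to check that this conjugation really stays inside $U_q^-\otimes U_q^+$. It does, since $q^{t_\infty}(x\otimes y)q^{-t_\infty}=xK_{\beta}^{\pm1}\otimes K_{\beta}^{\mp1}y$ up to scalars, and this must be absorbed. I expect this step to be the main subtlety. The intended reading is that $\BR^-$ is already in Drinfeld-half form because of the convention in \cite{KST,EKP}, so I would double-check the exact convention there.
\item[(c)] Applying $\tau_z$ to the first factor multiplies a term of degree $(m,-m)$ by $z^{\pm m}$. For $\BR^+(z)$ the first factor is taken before the flip, so we use the degrees of the $E$'s. In both cases this yields only nonnegative powers of $z$.
\end{itemize}

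Grouping by total weight gives $\BR^\pm_\beta(z)\in(U_q^-(\Gaff)_{-\beta}\otimes U_q^+(\Gaff)_{\beta})[[z]]$. Each coefficient of $z^n$ in a fixed weight involves only finitely many factors, so the sum is well defined. The weight-zero part comes only from the constant terms of the exponentials, hence $\BR_0^\pm=1\otimes1$.

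For part (2), the key input is \cite{KT,EKP}: conjugation by $R^-$ (respectively $R^+$) transforms the Drinfeld--Jimbo coproduct into the Drinfeld formal coproduct $\Delta^{(D)}$, for which $\Delta^{(D)}(h_{i,s})=h_{i,s}\otimes1+1\otimes h_{i,s}$. Concretely, \cite[Theorem 5.1]{EKP} gives $R^-\Delta(x)(R^-)^{-1}=\Delta^{(D)}(x)$ and $(R^+)^{-1}_{21}\Delta(x)R^+_{21}=\Delta^{(D),op}$-type identities in suitable completions. We then proceed in three steps.

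\begin{itemize}
\item[(a)] Specialize these identities to $x=h_{i,s}$.
\item[(b)] Apply $\tau_z\otimes\mathrm{Id}$ or $\mathrm{Id}\otimes\tau_z$, using $\tau_z(h_{i,s})=z^sh_{i,s}$. This yields the first two equations.
\item[(c)] For the third and fourth equations, apply the Hopf automorphisms $\tau_{z^{-1}}\otimes\tau_{z^{-1}}$ to the first two. Since $\Delta$ commutes with $\tau_a\otimes\tau_a$, this turns $(\tau_z\otimes\mathrm{Id})\Delta$ into $(\mathrm{Id}\otimes\tau_{z^{-1}})\Delta$ and rescales $h_{i,s}\otimes z^s$ into $h_{i,s}\otimes1$ and $1\otimes h_{i,s}$ into $z^{-s}\otimes h_{i,s}$. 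We also use that $\BR^\pm(z)$ is invariant under $\tau_a\otimes\tau_a$ up to replacing $z$ appropriately. This holds because each weight-$(-\beta,\beta)$ component has total degree $0$, by the degree analysis of part (1).
\end{itemize}

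Finally, everything lies in $U_q^{\leq}\ttimes U_q^{\geq}$ because $h_{i,s}\in U_q^0$ and $\Delta(h_{i,s})\in h_{i,s}\otimes1+1\otimes h_{i,s}+U_q^{\leq}\ttimes U_q^{\geq}$ by \cite{Damiani2,EKP}. The conjugation by $q^{t_\infty}$ commutes with $h_{i,s}\otimes1$ and $1\otimes h_{i,s}$ and fixes $\Delta(h_{i,s})$, since $q^{t_\infty}$ is compatible with weight-zero elements.
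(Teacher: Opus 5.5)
Your route is the same as the paper's: Damiani's ordered products together with Damiani's Proposition 9.3 for (1), the Khoroshkin--Tolstoy/Enriquez--Khoroshkin--Pakuliak intertwining for (2), and total degree zero for the last two equations. However, the one step that is not pure formality, the placement of $q^{t_\infty}$, is mishandled in both parts, so as written the proof has a gap.

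\textbf{Part (1).} Your claim in (a) that $E_{n\delta-\beta}\in U_q^-(\Gaff)$ is false. Already for $\widehat{\mathfrak{sl}}_2$ one has $E_0=E_{\delta-\alpha_1}=x_{1,1}^-K_1^{-1}$ (up to normalization), which is not in $U_q^-(\Gaff)$. What \cite[Proposition 9.3]{Damiani2} gives is $K_\beta E_{n\delta-\beta}\in U_q^-(\Gaff)$ and $F_{n\delta-\beta}K_\beta^{-1}\in U_q^+(\Gaff)$. For $x\in\qaf_{\lambda}$ and $y\in\qaf_{\gamma}$ we have $q^{t_\infty}(x\otimes y)q^{-t_\infty}=K_\gamma x\otimes yK_\lambda$. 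Hence the conjugation sends $E_{n\delta-\beta}\otimes F_{n\delta-\beta}$ exactly to $K_\beta E_{n\delta-\beta}\otimes F_{n\delta-\beta}K_\beta^{-1}$. The Cartan factors are therefore not something to be ``absorbed''. They are precisely what puts $\BR^-(z)$ into $U_q^-(\Gaff)\ttimes U_q^+(\Gaff)$, which is why $q^{t_\infty}$ is moved next to $R^-$ in the Gauss decomposition. With your (a), the conjugated factors would leave $U_q^-\otimes U_q^+$, and your (b) stops at ``double-check''.

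\textbf{Part (2).} Your closing claim that conjugation by $q^{t_\infty}$ fixes $\Delta(h_{i,s})$ is false. Total weight zero does not mean each tensor factor has weight zero. On $\qaf_{-\beta}\otimes\qaf_{\beta}$ the conjugation acts by $x\otimes y\mapsto K_\beta x\otimes yK_\beta^{-1}$. Moreover, $\Delta(h_{i,s})-h_{i,s}\otimes 1-1\otimes h_{i,s}$ is a nonzero sum of such components with $\beta\neq 0$. In fact, an intertwining identity for Damiani's $R^-$ alone and one for $R^-q^{-t_\infty}$ cannot both hold, since together they would force $[q^{t_\infty},\Delta(h_{i,s})]=0$.

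The correct input, \cite[Theorem 8.1]{KT} and \cite[Theorem 3.8]{EKP}, is that $R^+_{21}$ and $(R^-q^{-t_\infty})^{-1}$ intertwine $h_{i,s}\otimes 1+1\otimes h_{i,s}$ with $\Delta(h_{i,s})$. In particular, $(h_{i,s}\otimes 1+1\otimes h_{i,s})\,R^-q^{-t_\infty}=R^-q^{-t_\infty}\,\Delta(h_{i,s})$. Write $\BR^-=q^{t_\infty}\cdot(R^-q^{-t_\infty})$ and use only that $q^{t_\infty}$ commutes with $h_{i,s}\otimes 1+1\otimes h_{i,s}$. This gives $(h_{i,s}\otimes 1+1\otimes h_{i,s})\BR^-=\BR^-\Delta(h_{i,s})$, and applying $\tau_z\otimes\mathrm{Id}$ yields the first equation. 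With that repair, your derivation of the second equation via $\mathrm{Id}\otimes\tau_z$, and of the third and fourth via total degree zero, is fine and matches the paper.
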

\begin{proof}
We use the notations of \cite{Damiani}.
    Let $\Phi_{0,+} \subset \BQ_+$ denote the set of positive roots of the simple Lie algebra $\Glie$. By \cite[Definitions 4 \& 7]{Damiani}, there exist two linear orders on the sets $\BZ_{>0} \times \Phi_{0,+}$ and $\BZ_{\geq 0} \times \Phi_{0,+}$ respectively, with respect to which we have the following infinite ordered product formulas for $\BR^{\pm}(z)$:
    \begin{align*}
        \BR^-(z) &= \prod_{(n,\beta)\in \BZ_{>0}\times \Phi_{0,+}}^{\succ} \exp_{q_{\beta}}\left( (q_{\beta}^{-1}-q_{\beta}) K_{\beta} E_{n\delta-\beta} \otimes F_{n\delta-\beta} K_{\beta}^{-1} z^n \right), \\
        \BR^+(z) &= \prod_{(n,\beta)\in \BZ_{\geq 0}\times \Phi_{0,+}}^{\succ} \exp_{q_{\beta}}\left( (q_{\beta}^{-1}-q_{\beta}) F_{n\delta+\beta} \otimes E_{n\delta+\beta} z^n \right).
    \end{align*}
    Here, for $\beta \in \Phi_{0,+}$, we have $q_{\beta} := q^{\frac{(\beta,\beta)}{2}} \in \{q,q^2,q^3\}$. The $q$-exponential $\exp_q(x)$ is  
    $$ \exp_q(x) := 1 + \sum_{m=1}^{+\infty}  \frac{1}{(1)_q(2)_q \cdots (m)_q} x^m \quad \textrm{with }\quad (m)_q := \frac{q^{2m}-1}{q^2-1}.  $$
    For $\BR^-(z)$ we used the following conjugation relation 
    $$ q^{t_{\infty}} (x \otimes y) q^{-t_{\infty}} = K_{\gamma} x \otimes y K_{\beta} \quad \textrm{for $x \in \qaf_{\beta}$ and $y \in \qaf_{\gamma}$.} $$
        Each root vector $E_{n\delta+\beta}$, for $n \in \BZ$ and $\beta \in \pm \Phi_{0,+}$, is of weight $\beta$ and of degree $n$. Similarly, $F_{n\delta+\beta}$ is of weight $-\beta$ and of degree $-n$. By \cite[Proposition 9.3]{Damiani2}, we have $K_{\beta}E_{n\delta-\beta},\, F_{n\delta+\beta} \in U_q^-(\Gaff)$ and $F_{n\delta-\beta} K_{\beta}^{-1},\, E_{n\delta+\beta} \in U_q^+(\Gaff)$. This proves Part (1).
    
    Part (2) is a consequence of \cite[Theorem 8.1]{KT} and \cite[Theorem 3.8]{EKP}: both $R^+_{21}$ and $(R^-q^{-t_{\infty}})^{-1}$ intertwine $h_{i,s}\otimes 1 + 1 \otimes h_{i,s}$ with $\Delta(h_{i,s})$.
\end{proof}

We call $\BR^{\pm}(z)$ {\it unitriangular R-matrices} of the quantum affine algebra.
In Part (2) of the proposition, the first and third equations are equivalent because the coproduct $\Delta$ preserves the $\BZ$-grading of $\qaf$ and the coefficients of $\BR^-(z)$ are of total degree 0. Similarly, the remaining two equations are equivalent. We list all these equations because they will lead to four different conjugation formulas of $\BR^{\pm}(z)$.

\subsection{T-series and Theta series} In this subsection, we recall from \cite{FH} the T-series of Frenkel--Hernandez and from \cite{Z2} the construction of Theta series. These auxiliary series will play a key role in our conjugation formulas.

To each Dynkin node $i \in I$ are attached the positive T-series $T_i^+(z)$ and the negative T-series $T_i^-(z)$ as follows:
\begin{equation} 
   T_i^{\pm}(z) := \exp\left(\sum_{s > 0}  \sum_{j\in I} \frac{\widetilde{B}_{ji}(q^s)}{[s]_q} h_{j,\pm s} z^s\right) \in U_q^0(\Gaff)[[z]].  \label{def: T}
\end{equation}
Our $T_i^{\pm}(z)$ are precisely the series $\mathbf{T}_i^{\pm}(z^{\pm 1})$ in \cite[(3.13)]{HZ2}, and $T_i^-(z)$ is the original Frenkel--Hernandez series $T_i(z)$ in \cite[Proposition 5.5]{FH}. We use power series in the spectral parameter $z$ instead of formal Laurent series in the variable $u$ as in Eq.\eqref{def: gene series qaf} because later we will compute the R-matrices $\BR^{\pm}(z)$ in terms of T-series.

Compared with the Drinfeld--Cartan series, T-series have simpler commutation relations with the Drinfeld generators. By \cite[(3.16)]{HZ2} and its proof, we have the following relations in $\qaf[[z,u,u^{-1}]]$ for $i, j \in I$:
\begin{equation}  \label{rel: Ti x qaf}
    \begin{split}
        T_i^{\pm}(z)  x_j^-(u) = (1 - \delta_{ij} u^{\mp 1} z)  x_j^-(u)  T_i^{\pm}(z), \\
        x_j^+(u) T_i^{\pm}(z) = (1 - \delta_{ij} u^{\mp 1} z)T_i^{\pm}(z) x_j^+(u) .
    \end{split}
\end{equation}

More generally, we attach T-series to tuples of polynomials.
    We call a complex polynomial $p(u) \in \BC[u]$ a {\it Drinfeld polynomial} if its constant term is 1, or equivalently, $p(u)$ is a finite product of $1-au$ for $a \in \BC^{\times}$. Given an $I$-tuple $\mathbf{p} = (\mathbf{p}_i(u))_{i\in I}$ of Drinfeld polynomials, we factorize each component
    $$ \mathbf{p}_i(u) = (1-a_{i1}u) (1-a_{i2}u) \cdots (1-a_{i,k_i}u) \quad \textrm{with $k_i \in \BN$ and $a_{ij} \in \BC^{\times}$ for $1\leq j \leq k_i$.} $$
   The T-series associated with $\Bp$ are defined as \cite[Definition 9.2]{Z2}
   \begin{equation}  \label{def: T gen qaf}
       T_{\mathbf{p}}^{\pm}(z) := \prod_{i\in I} T_i^{\pm}( a_{i1}^{\mp 1}z) T_i^{\pm}( a_{i2}^{\mp 1} z) \cdots T_i^{\pm}(a_{i,k_i}^{\mp1} z) \in U_q^0(\Gaff)[[z]].
   \end{equation}
   The commutation relations \eqref{rel: Ti x qaf} are generalized as follows:
   \begin{equation}  \label{rel: Tp x qaf}
    \begin{split}
        T_{\mathbf{p}}^-(z) x_j^-(u) = \mathbf{p}_i(zu) x_j^-(u) T_{\mathbf{p}}^-(z), \quad T_{\mathbf{p}}^+(z) x_j^-(u) = \mathbf{p}_i^{*}(u^{-1}z) x_j^-(u) T_{\mathbf{p}}^+(z), \\
        x_j^+(u) T_{\mathbf{p}}^-(z) = \mathbf{p}_i(zu)T_{\mathbf{p}}^-(z) x_j^+(u), \quad  x_j^+(u) T_{\mathbf{p}}^+(z) = \mathbf{p}_i^*(u^{-1}z)  T_{\mathbf{p}}^+(z) x_j^+(u) .
    \end{split}
\end{equation}
  Here, $\mathbf{p}_i^*(u)$ denotes the Drinfeld polynomial
  $$ \mathbf{p}_i^*(u) = (1-a_{i1}^{-1}u) (1-a_{i2}^{-1}u) \cdots (1-a_{i,k_i}^{-1}u). $$

 Both T-series $T_{\Bp}^{\pm}(z)$ are invertible power series in $z$ of constant term 1. Factorizing properly their coproduct in the space $\qaf^{\otimes 2}[[z]]$ leads to the {\it Theta series} $\Theta_{\Bp}^{\pm}(z)$; see \cite[Definition 9.2]{Z2}:
\begin{equation}  \label{def: Theta}
    \Delta(T_{\Bp}^{\pm}(z)) = (1\otimes T_{\Bp}^{\pm}(z)) \times\Theta_{\Bp}^{\pm}(z) \times (T_{\Bp}^{\pm}(z) \otimes 1). 
\end{equation}  
 
 The next result was first proved for $\Theta^-$ in \cite[Theorem 9.5]{Z2}; the $\Theta^+$-case follows from the $\Theta^-$-case by \cite[Theorem 3.3]{HZ2}. Recently, Negu\c{t} extended it to more general quantum loop algebras \cite[Theorem 4.4]{Neg} using shuffle algebras.
\begin{theorem}\cite{Z2,HZ2}  \label{thm: Theta qaf}
    For $\Bp$ an $I$-tuple of Drinfeld polynomials, both Theta series $\Theta_{\Bp}^{\pm}(z)$ are unitriangular sums of polynomials in $z$ with coefficients in $U_q^-(\Gaff) \otimes U_q^+(\Gaff)$:
    \begin{gather*}
            \Theta_{\Bp}^{\pm}(z) = \sum_{\beta \in \BQ_+} \Theta_{\Bp,\beta}^{\pm}(z) \quad \textrm{with} \\
            \Theta_{\Bp,\beta}^{\pm}(z) \in U_q^-(\Gaff)_{-\beta}\otimes U_q^+(\Gaff)_{\beta}[z] \quad \textrm{for $\beta \in \BQ_+$ and} \quad \Theta_{\Bp,0}^{\pm}(z) = 1\otimes 1.
        \end{gather*}
\end{theorem}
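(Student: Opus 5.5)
Since each $T_{\Bp}^{\pm}(z)$ is by \eqref{def: T gen qaf} a finite product of T-series $T_i^{\pm}(az)$, my plan is to reduce to a single $T_i^{\pm}$ and then control the coproduct of one $T_i^{\pm}$ through the commutation relations \eqref{rel: Ti x qaf}. The central object is
\[
\Theta^{\pm}(z) = (1\otimes T_{\Bp}^{\pm}(z))^{-1}\,\Delta(T_{\Bp}^{\pm}(z))\,(T_{\Bp}^{\pm}(z)\otimes 1)^{-1}.
\]
I will first show that it lies in a completion of $U_q^{\leq}(\Gaff)\otimes U_q^{\geq}(\Gaff)$, is of total weight zero, and has components in $U_q^-(\Gaff)_{-\beta}\otimes U_q^+(\Gaff)_{\beta}$ with zero component $1\otimes 1$. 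Only afterwards will I prove polynomiality in $z$.

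\textbf{Unitriangularity.} I will use the known form of the Drinfeld--Jimbo coproduct on the Drinfeld--Cartan subalgebra (Damiani, Beck): modulo terms of the form $U_q^-(\Gaff)_{-\beta}U_q^0\otimes U_q^0U_q^+(\Gaff)_{\beta}$ with $\beta>0$, one has $\Delta(h_{i,s}) \equiv h_{i,s}\otimes 1+1\otimes h_{i,s}$. Exponentiating, $\Delta(T_i^{\pm}(z))$ equals $T_i^{\pm}(z)\otimes T_i^{\pm}(z)$ plus positive-weight corrections. Factoring out $T\otimes 1$ on the right and $1\otimes T$ on the left (these commute, being in $U_q^0\otimes U_q^0$), and absorbing the remaining Cartan parts, I obtain $\Theta^{\pm}$ with the stated weight decomposition. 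To show the Cartan parts are absorbed, I will use the triangular decomposition $U_q^-\otimes U_q^0\otimes U_q^+$ together with the Hopf pairing, arguing as in \cite{Z2}.

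\textbf{Polynomiality.} This is the main obstacle. The idea is to compute the commutator of $\Delta(T_{\Bp}(z))$ with $\Delta(x_j^{\pm}(u))$ using \eqref{rel: Tp x qaf}, which gives an exact commutation with the scalar factor $\Bp_j(zu)$. The point is that $\Theta_{\Bp}(z)$ then satisfies a difference-type relation expressing how it intertwines $\Delta(x_j^-(u))$ with a modified coproduct. Reading off the $\beta$-component, $\Theta_{\Bp,\beta}(z)$ pairs with elements of $U_q^+(\Gaff)_\beta$ via products of the $\Bp_j$. Concretely, I would identify $\Theta_{\Bp,\beta}(z)$ via the Drinfeld pairing with the linear form
\[
x^+_{j_1}(u_1)\cdots x^+_{j_n}(u_n)\longmapsto \prod_k \Bp_{j_k}(zu_k)\times(\text{pairing kernel}).
\]
In the shuffle picture this is a symmetric function multiplied by $\prod_k \Bp_{j_k}(z u_k)$, whose degree in $z$ is bounded by $\sum_k \deg \Bp_{j_k}$, which is finite for fixed $\beta$. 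Hence each component is a polynomial in $z$, of degree at most $\sum_i m_i\deg\Bp_i$ for $\beta=\sum_i m_i\alpha_i$.

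The rigorous version of this step is exactly \cite[Theorem 9.5]{Z2} for $\Theta^-$. I would deduce the $\Theta^+$ case from it by the involution of \cite[Theorem 3.3]{HZ2}, which exchanges $T^+$ and $T^-$ (with $\Bp\mapsto\Bp^*$) and is compatible with the coproduct up to flip and the anti-automorphism.
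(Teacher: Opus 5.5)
Your proposal ultimately rests on the same two citations the paper gives for this statement: \cite[Theorem 9.5]{Z2} for $\Theta^-_{\Bp}(z)$, transferred to $\Theta^+_{\Bp}(z)$ by \cite[Theorem 3.3]{HZ2}. So it is correct and agrees with the paper, but your preliminary sketches are heuristics rather than proofs. A coproduct estimate that allows Cartan parts $U_q^-U_q^0\otimes U_q^0U_q^+$ cannot just be ``absorbed''; what actually holds, and what makes your factorization go through, is
$$\Delta(h_{i,s})-h_{i,s}\otimes 1-1\otimes h_{i,s}\in\sum_{0\neq\beta\in\BQ_+}U_q^-(\Gaff)_{-\beta}\otimes U_q^+(\Gaff)_{\beta},$$
which follows from Proposition \ref{prop: unitriangular R qaf}(2) because $\mathrm{ad}(h_{i,s})$ preserves $U_q^{\pm}(\Gaff)$. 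Your shuffle-pairing degree bound is essentially Negu\c{t}'s route \cite{Neg} and is not carried out.

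For a self-contained proof, see the remark after Theorem \ref{thm: conjugation formula qaf}. The key formula derived from that same intertwining property writes $(\tau_z\otimes\mathrm{Id})(\Theta^+_{\Bp,\beta}(u))$ as a finite sum over $\gamma$ of products of weight components of $\BR^-(z)^{-1}$ and $\BR^-(z)$, conjugated by $T^+_{\Bp}(u)$ and $T^+_{\Bp}(zu)$. This exhibits the $U_q^-(\Gaff)_{-\beta}\otimes U_q^+(\Gaff)_{\beta}$-valuedness and gives polynomiality in $u$ directly from \eqref{rel: Tp x qaf}, with degree at most $\sum_i m_i\deg\Bp_i$ for $\beta=\sum_i m_i\alpha_i$, matching your prediction.
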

As a consequence, one can specialize $z$ at an arbitrary nonzero complex number $a$ to get unitriangular elements $\Theta_{\Bp}^{\pm}(a)$ of the 0-completed tensor product $U_q^-(\Gaff) \ttimes U_q^+(\Gaff)$.

\begin{example}  \label{ex: Theta qaf}
    For $j\in I,\, a\in \BC^{\times}$, let $\Psi_{j,a}$ denote the $I$-tuple $(1-\delta_{ij} au)_{i\in I}$; this is a prefundamental $\ell$-weight from \cite[Definition 3.7]{HJ}.  Combining \cite[Example 9.6]{Z2} with \cite[Example 3.4]{HZ2}, we have the following formulas:
    \begin{align*}
        \sum_{n\geq 0}\Theta_{\Psi_{j,a},n\alpha_i}^+(z) &= \exp_{q_i^{-1}}\left( \delta_{ij} (q_i^{-1}-q_i) a^{-1}  x_{i,1}^- \otimes x_{i,0}^+z\right), \\
        \sum_{n\geq 0}\Theta_{\Psi_{j,a}, n\alpha_i}^-(z) &= \exp_{q_i}\left( \delta_{ij} (q_i-q_i^{-1}) ax_{i,0}^-\otimes x_{i,-1}^+z\right).
    \end{align*}
    More generally, let $\Bp$ be an $I$-tuple of Drinfeld polynomials such that $\Bp_j(u) = 1 - a u$. Factorizing $\Bp$ as a product of prefundamental $\ell$-weights and applying the multiplicativity property of Theta series in \cite[comments below (9.49)]{Z2}, we see that  $\Theta_{\Psi_{j,a}}^{\pm}(z)$ and $\Theta_{\Bp}^{\pm}(z)$ have the same $n\alpha_j$-components for $n \in \BN$. 
\end{example}

\section{Conjugation formulas for the quantum affine algebra}  \label{sec: conjugation formula qaf}
The two unitriangular R-matrices $\BR^{\pm}(z)$ are power series in $z$ with coefficients in the 0-completed tensor product $U_q^-(\Gaff) \ttimes U_q^+(\Gaff)$. In this section, we take suitable quotients of $U_q^{\pm}(\Gaff)$ and simplify the resulting power series as conjugations of Theta series by T-series. All finite-dimensional representations of the full quantum affine algebra $\qaf$ factorize through these quotients, so that our conjugation formulas are applicable and we prove the rationality of $\BR^{\pm}(z)$ evaluated at these representations.

\begin{defi}  \label{def: quotient qaf}
    For $\Bp = (\Bp_i(u))_{i\in I}$ an $I$-tuple of Drinfeld polynomials, we define $\mathbb{A}_{\Bp}^+$  to be the quotient of $U_q^+(\Gaff)$ by the two-sided ideal generated by the coefficients of $\Bp_i(u) x_i^+(u)$ for all $i \in I$. Similarly, define $\mathbb{A}_{\Bp}^-$ to be the quotient of $U_q^-(\Gaff)$ by the two-sided ideal generated by the coefficients of $\Bp_i(u)x_i^-(u)$ for $i \in I$. Let $ \pi_{\Bp}^{\pm}: U_q^{\pm}(\Gaff) \longrightarrow \mathbb{A}_{\Bp}^{\pm}$ denote the corresponding quotient morphisms of $\BQ$-graded algebras. 
\end{defi}
\begin{rem}
We conjecture $\mathbb{A}_{\Bp}^{\pm}$ to be finite-dimensional for any $I$-tuple $\Bp$ of Drinfeld polynomials. As an example,
   assume $\Bp_i(u) = 1 - a_i u$ with $a_i \in \BC^{\times}$ for each $i \in I$ and $a_i \neq a_j q^{b_{ij}}$ whenever $b_{ij} \neq 0$. Let $y_i \in \mathbb{A}_{\Bp}^+$ denote the image of $x_{i,0}^+$ under the quotient map $\pi_{\Bp}^+$. Then we have $\pi_{\Bp}(x_{i,n}^+) = a_i^n y_i$, so the algebra $\mathbb{A}_{\Bp}^+$ is generated by $y_i$ for $i\in I$. The Drinfeld relations and degree-two Serre relations are simplified as
   $$  (a_i-q^{b_{ij}} a_j) y_iy_j = (q^{b_{ij}}a_i-a_j) y_jy_i \quad \textrm{if $b_{ij} \neq 0$}, \quad y_i y_j = y_j y_i \quad \textrm{if $b_{ij} = 0$}. $$
Our assumption on $\Bp$ forces $y_iy_j \in \BC^{\times} y_jy_i$ and $y_i^2 = 0$ for all $i, j \in I$, so that the algebra $\mathbb{A}_{\Bp}^+$ is finite-dimensional. 
\end{rem}

 We arrive at the first main result of this paper. 
\begin{theorem}  \label{thm: conjugation formula qaf}
    Let $\Bp = (\Bp_i(u))_{i\in I}$ be an $I$-tuple of Drinfeld polynomials. Then we have the following conjugation formulas in $(U_q^-(\Gaff) \ttimes \mathbb{A}_{\Bp}^+)[[z]]$ for the first two equations and in $(\mathbb{A}_{\Bp}^- \ttimes U_q^+(\Gaff))[[z]]$ for the last two equations:
    \begin{align*}
        (\mathrm{Id} \otimes \pi_{\Bp}^+)(\BR^-(z)) &= \left( T_{\Bp}^+(z)\otimes 1\right)^{-1} \times (\tau_z \otimes \pi_{\Bp}^+)(\Theta_{\Bp}^+(1)) \times \left( T_{\Bp}^+(z)\otimes 1\right),  \\
        (\pi_{\Bp}^- \otimes \mathrm{Id})(\BR^+(z)) &= \left(1\otimes T_{\Bp}^+(z)\right) \times (\pi_{\Bp}^- \otimes \tau_z)(\Theta_{\Bp}^+(1)) \times \left(1\otimes T_{\Bp}^+(z)\right)^{-1}, \\
        (\pi_{\Bp}^- \otimes \mathrm{Id})(\BR^-(z)) &= \left(1\otimes T_{\Bp}^-(z)\right) \times (\pi_{\Bp}^- \otimes \tau_{z^{-1}})\left(\Theta_{\Bp}^-(1)^{-1}\right) \times \left(1\otimes T_{\Bp}^-(z)\right)^{-1},\\
        (\mathrm{Id} \otimes \pi_{\Bp}^+)(\BR^+(z)) &= \left( T_{\Bp}^-(z)\otimes 1\right)^{-1} \times (\tau_{z^{-1}} \otimes \pi_{\Bp}^+)\left(\Theta_{\Bp}^-(1)^{-1}\right) \times \left( T_{\Bp}^-(z)\otimes 1\right).
    \end{align*}
\end{theorem}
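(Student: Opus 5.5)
We treat the first formula in detail; the other three follow by the same argument, using the other intertwining equations of Proposition \ref{prop: unitriangular R qaf}(2). The strategy is a uniqueness argument. Both sides are unitriangular series in $(U_q^-(\Gaff)\ttimes \mathbb{A}_{\Bp}^+)[[z]]$, and both intertwine the same pair of "Cartan coproducts". We show that such an intertwiner is determined by its zero-weight component $1\otimes 1$.

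\textbf{Step 1: the intertwining property of the left-hand side.} Project the first equation of Proposition \ref{prop: unitriangular R qaf}(2) through $\mathrm{Id}\otimes\pi_{\Bp}^+$. Exponentiating the $h_{i,s}$ with the coefficients of \eqref{def: T}, we get
$$ (T_{\Bp}^+(wz)\otimes 1)\,(1\otimes T_{\Bp}^+(w))\; L = L\; (\tau_z\otimes\mathrm{Id})\Delta(T_{\Bp}^+(w)), $$
where $L := (\mathrm{Id}\otimes\pi_{\Bp}^+)(\BR^-(z))$, $w$ is an auxiliary variable, and $\pi_{\Bp}^+$ is applied to the second factor of the right-hand side. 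This requires $(\mathrm{Id}\otimes\pi^+_{\Bp})$ to be extended to $U_q^{\leq}\ttimes U_q^{\geq}$. I would do this by writing everything via \eqref{def: Theta}:
$$ \Delta(T_{\Bp}^+(w)) = (1\otimes T^+_{\Bp}(w))\,\Theta^+_{\Bp}(w)\,(T^+_{\Bp}(w)\otimes 1), $$
and moving the Cartan factors across unitriangular elements using \eqref{rel: Tp x qaf}. The resulting relations involve only $U_q^-\ttimes U_q^+$ twisted by scalar factors, to which $\pi_{\Bp}^+$ applies. Equivalently, conjugating by the T-series gives the twisted relation
$$ \mathrm{Ad}\big(1\otimes T^+_{\Bp}(w)\big)^{-1}\,\mathrm{Ad}\big(T^+_{\Bp}(wz)\otimes 1\big)^{-1}\,L \;\cdot\; (\ldots) = L\,\Theta^+_{\Bp}(w)\,(\ldots). $$

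\textbf{Step 2: the same identity for the right-hand side.} For the candidate $M := (T^+_{\Bp}(z)\otimes 1)^{-1}(\tau_z\otimes\pi^+_{\Bp})(\Theta^+_{\Bp}(1))(T^+_{\Bp}(z)\otimes 1)$, verify the same intertwining identity. The key inputs are:
\begin{itemize}
\item the coassociativity-type cocycle identity for Theta series coming from $(\Delta\otimes\mathrm{Id})\Delta(T)=(\mathrm{Id}\otimes\Delta)\Delta(T)$;
\item the relations \eqref{rel: Tp x qaf}, which show that conjugating $\tau_z(x_j^-(u))$ by $T^+_{\Bp}(z)$ multiplies it by $\Bp_j^*(u^{-1}z)$-type factors;
\item the crucial input: in $\mathbb{A}^+_{\Bp}$, the series $\Bp_j(u)x_j^+(u)$ vanishes.
\end{itemize}
The vanishing is what allows the polynomial $\Theta_{\Bp}^+$ (Theorem \ref{thm: Theta qaf}) to be evaluated at the point $1$: after projection, the dependence on the spectral variable is absorbed into $T$-conjugation, with $z$ carried by $\tau_z$. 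I expect this step to be the main obstacle. One must check carefully that the $\Bp$-factors produced by commuting $T^+_{\Bp}(z)\otimes 1$ past the first tensor factors combine with those from the second factors into exactly $\Bp_j(u)$ multiplying $x_j^+(u)$ in $\mathbb{A}^+_{\Bp}$. I would first do the rank-one check using Example \ref{ex: Theta qaf}, where $\Theta^+$ is an explicit $q$-exponential, to fix conventions such as $u$ versus $u^{-1}$ and $\Bp$ versus $\Bp^*$.

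\textbf{Step 3: uniqueness.} Suppose $X=\sum_\beta X_\beta$ is unitriangular with $X_0=1\otimes1$ and satisfies the intertwining relation for all $h_{i,s}$. Comparing weight-$\beta$ components gives a recursion in which $[h_{i,s}\otimes z^s + 1\otimes h_{i,s},\,X_\beta]$ is expressed through the $X_\gamma$ with $\gamma<\beta$. The commutator with the Cartan part acts on $U_q^-{}_{-\beta}\otimes \mathbb{A}^+_\beta$ by operators that are injective on the $z$-adic leading terms; this follows from the standard Drinfeld relation $[h_{i,s},x_{j,m}^\pm]=\pm\frac{[s b_{ij}]_q}{s}x^\pm_{j,m+s}$ and its dependence on $z^s$. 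Hence $X$ is determined by induction on the height of $\beta$ and on the $z$-degree, so $L=M$.

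If injectivity fails in this form, I would fall back on a direct argument: apply the intertwining to $L^{-1}M$. This element commutes with the twisted Cartan coproduct, and a leading-term argument on a minimal nonzero weight component forces it to equal $1\otimes 1$.
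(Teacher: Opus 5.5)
\paragraph{The gap: Step 2.} Step 2 carries the whole content of the theorem, and it is left as a list of ingredients that do not deliver it.

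\emph{What Step 2 would have to prove.} Exponentiate the intertwining relation as in your Step 1 and set $\theta(w) := (\tau_z\otimes\pi_{\Bp}^+)(\Theta_{\Bp}^+(w))$. For your candidate $M$ you would then need the functional equation
\[ \mathrm{Ad}_{T_{\Bp}^+(zw)T_{\Bp}^+(z)^{-1}\otimes 1}\bigl(\theta(1)\bigr) = \mathrm{Ad}^{-1}_{T_{\Bp}^+(z)\otimes T_{\Bp}^+(w)}\bigl(\theta(1)\bigr)\cdot\theta(w), \]
which relates Theta series at the two points $1$ and $w$ inside the quotient. At $w=1$ it is a tautology: conjugation by $T_{\Bp}^+(w)^{-1}$ on $\mathbb{A}_{\Bp}^+$, specialized at $w=1$, is the augmentation. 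So all of its content sits at generic $w$.

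\emph{Why your ingredients do not give it.} Coassociativity of $\Delta(T_{\Bp}^+)$ only yields three-fold identities involving $(\Delta\otimes\mathrm{Id})(\Theta_{\Bp}^+)$. Unlike the Yangian section, $\qaf$ has no one-dimensional modules with nontrivial $\ell$-weight that could collapse the middle factor into a two-fold statement. The only source of this identity I can see is the intertwining property of $\BR^-(z)$ itself, and once you use that, Step 3 becomes superfluous.

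\emph{Two further points.} Step 3 is plausible but incomplete as written. It needs the joint kernel of the $\mathrm{ad}(h_{i,s})$, $s>0$, on $\mathbb{A}^+_{\Bp,\beta}$ with $\beta\neq 0$ to vanish, and that is a statement about the quotient algebra, not a consequence of the Drinfeld relations on generators. You also misplace the role of the relation $\Bp_j(u)x_j^+(u)=0$. By Theorem \ref{thm: Theta qaf}, $\Theta_{\Bp}^+(z)$ is polynomial before any projection, so evaluating it at $1$ needs no quotient.

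\paragraph{The missing idea.} Run your Step 1 on the unprojected $\BR^-(z)$ and then specialize the auxiliary variable, instead of comparing two intertwiners.
\begin{enumerate}
\item Insert \eqref{def: Theta} into the exponentiated intertwining equation, left-multiply by $1\otimes T_{\Bp}^+(u)^{-1}$ and right-multiply by $T_{\Bp}^+(zu)^{-1}\otimes 1$. This gives
\[ \mathrm{Ad}_{T_{\Bp}^+(zu)\otimes 1}\bigl(\BR^-(z)\bigr) = \bigl(\mathrm{Id}\otimes\mathrm{Ad}^{-1}_{T_{\Bp}^+(u)}\bigr)\bigl(\BR^-(z)\bigr)\cdot(\tau_z\otimes\mathrm{Id})\bigl(\Theta_{\Bp}^+(u)\bigr). \]
All second tensor factors here lie in $U_q^+(\Gaff)$, so $\pi_{\Bp}^+$ never has to be extended to $U_q^{\geq}(\Gaff)$.
\item By \eqref{rel: Tp x qaf}, $T_{\Bp}^+(u)^{-1}x_i^+(v)T_{\Bp}^+(u) = \Bp_i^*(v^{-1}u)\,x_i^+(v)$. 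Hence the coefficient of each $z^n$ in each weight component of each factor is polynomial in $u$, and one may set $u=1$.
\item Apply $\mathrm{Id}\otimes\pi_{\Bp}^+$. The middle factor becomes $1\otimes 1$, because $\Bp_i^*(v^{-1})x_i^+(v)$ is a unit times $v^{k}\Bp_i(v)x_i^+(v)$ and so has coefficients in $\ker\pi_{\Bp}^+$.
\end{enumerate}
What remains is exactly the first conjugation formula, with no uniqueness statement needed. The other three formulas follow the same way from the other intertwining equations.
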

\begin{proof}
   We only prove the first conjugation formula, based on the first intertwining equation of Proposition \ref{prop: unitriangular R qaf}(2). The same idea applied to the last three intertwining equations gives the last three conjugation formulas.

\medskip

\noindent {\bf Step 1: Key formula.} According to \eqref{def: T} and \eqref{def: T gen qaf}, the power series $T_{\Bp}^+(z)$ is the exponential of $\sum_{s>0} h_{\Bp,s} z^s$ where each $h_{\Bp,s}$ is a linear combination of $h_{i,s}$ for $i \in I$. The first intertwining equation of Proposition \ref{prop: unitriangular R qaf}(2) remains true by replacing $h_{i,s}$ with $h_{\Bp,s}$. Multiply the resulting equation by $u^s$, sum over $s > 0$, and then take the exponential. We get the following equation in the algebra $(U_q^{\leq}(\Gaff)\ttimes U_q^{\geq}(\Gaff))((z))[[u]]$:
    $$ \left( T_{\Bp}^+(zu) \otimes T_{\Bp}^+(u) \right) \times \BR^-(z) = \BR^-(z) \times (\tau_z \otimes \mathrm{Id})\circ \Delta(T_{\Bp}^+(u)). $$
    All the first three factors belong to the subalgebra $(U_q^{\leq}(\Gaff)\ttimes U_q^{\geq}(\Gaff))[[z]][[u]]$. Since the third factor is constant in $u$,  the fourth factor belongs to this subalgebra. 
Replacing $\Delta(T_{\Bp}^+(u))$ by Theta series via Eq.\eqref{def: Theta}, and left multiplying by $1\otimes T_{\Bp}^+(u)^{-1}$, we obtain the {\bf key formula} in the algebra $(U_q^{\leq}(\Gaff)\ttimes U_q^\geq(\Gaff))[[u,z]]$:
$$ \left( T_{\Bp}^+(zu) \otimes 1\right) \times \BR^-(z) = (\mathrm{Id} \otimes \mathrm{Ad}_{T_{\Bp}^+(u)}^{-1})(\BR^-(z)) \times (\tau_z \otimes \mathrm{Id})(\Theta_{\Bp}^+(u)) \times (T_{\Bp}^+(zu) \otimes 1).  $$

\medskip

\noindent {\bf Step 2: polynomiality in $u$.}
We show that the five factors of the key formula are power series in $z$ with coefficients in the 0-completed tensor product $U_q^{\leq}(\Gaff) \ttimes (U_q^+(\Gaff)[u])$. Namely, the five factors lie in the following subalgebra of $(U_q^{\leq}(\Gaff)\ttimes U_q^\geq(\Gaff))[[u,z]]$: 
$$ \left(U_q^{\leq}(\Gaff) \ttimes (U_q^+(\Gaff)[u])\right)[[z]] =    \prod_{\beta \in \BQ_+} \left( U_q^{\leq}(\Gaff)_{-\beta} \otimes U_q^+(\Gaff)_{\beta}[u] \right)[[z]].  $$
\begin{itemize}
  \item[(i)] For the first, second and fifth factors, this is evident.
    \item[(ii)] The fourth factor is the sum, over $\beta \in \BQ_+$, of $(\tau_z \otimes \mathrm{Id})(\Theta_{\Bp,\beta}^+(u))$. Step 1 shows that no negative powers of $z$ appear in such a $\beta$-component. Together with Theorem \ref{thm: Theta qaf}, we see that the $\beta$-component lies in $U_q^-(\Gaff)_{-\beta} \otimes U_q^+(\Gaff)_{\beta}[u,z]$.
    \item[(iii)] The third factor is the sum, over $\beta \in \BQ_+$, of power series $(\mathrm{Id}\otimes \mathrm{Ad}_{T_{\Bp}^+(u)}^{-1})(\BR_{\beta}^-(z))$. By Proposition \ref{prop: unitriangular R qaf}(1), the second tensor factor of $\BR_{\beta}^-(z)$ lies in the weight space $U_q^+(\Gaff)_{\beta}$. By Eq.\eqref{rel: Tp x qaf}, conjugation by $T_{\Bp}^+(u)^{-1}$ sends each such weight space $U_q^+(\Gaff)_{\beta}$ to the space $U_q^+(\Gaff)_{\beta}[u]$ of polynomials. As a consequence,
    $$  (\mathrm{Id}\otimes \mathrm{Ad}_{T_{\Bp}^+(u)}^{-1})(\BR_{\beta}^-(z)) \in (U_q^-(\Gaff)_{-\beta} \otimes U_q^+(\Gaff)_{\beta}[u]) [[z]]. $$
\end{itemize}

\medskip

\noindent {\bf Step 3: evaluation at $u = 1$.}
In the key formula, let us evaluate $u$ at 1, left multiply both sides by $T_{\Bp}^+(z)^{-1} \otimes 1$, and apply $\mathrm{Id} \otimes \pi_{\Bp}^+$. This gives an equation
$$(\mathrm{Id}\otimes \pi_{\Bp}^+)(\BR^-(z)) = \mathrm{Ad}_{T_{\Bp}^+(z)\otimes 1}^{-1} \left(\lim_{u\rightarrow 1}\left(\mathrm{Id} \otimes \pi_{\Bp}^+ \circ \mathrm{Ad}_{T_{\Bp}^+(u)}^{-1})(\BR^-(z)\right) \times (\tau_z \otimes \pi_{\Bp}^+)(\Theta_{\Bp}^+(1)\right)$$
in the algebra $(U_q^-(\Gaff) \ttimes \mathbb{A}_{\Bp}^+)[[z]]$.
In comparison with the first conjugation formula, it suffices to prove that the limit factor at the right-hand side is $1\otimes 1$. Since the second tensor factor of $\BR^-(z)$ lies in $U_q^+(\Gaff)$, it is enough to show that
$$ \lim_{u\rightarrow 1} \pi_{\Bp}^+\left( T_{\Bp}^+(u)^{-1} x_i^+(z) T_{\Bp}^+(u)\right) = 0 \quad \textrm{for $i \in I$}. $$
By \eqref{rel: Tp x qaf}, the conjugation is given by $\Bp_i^*(z^{-1}u) x_i^+(z) \in U_q^+(\Gaff)[u][[z,z^{-1}]]$. Its evaluation at $u=1$ is the formal Laurent series $\Bp_i^*(z^{-1}) x_i^+(z)$, whose coefficients belong to the kernel of $\pi_{\Bp}^+$ because $\Bp_i^*(z^{-1}) = \lambda z^k \Bp_i(z)$ for certain $\lambda \in \BC^{\times}$ and $k \in \BZ$. 
\end{proof}
\begin{rem}
    Let us use the key formula at Step 1 to give a direct proof of Theorem \ref{thm: Theta qaf}. It suffices to show that each $(\tau_z \otimes \mathrm{Id})(\Theta_{\Bp,\beta}(u))$, for $\beta \in \BQ_+$, is a polynomial in $u$. For $\gamma \in \BQ$, let $\BR_{\gamma}^{*}(z)$ denote the $(-\gamma,\gamma)$-component of the unitriangular power series $\BR^-(z)^{-1} \in (U_q^-(\Gaff)\ttimes U_q^+(\Gaff))[[z]]$. Then the key formula implies a finite summation
    $$ (\tau_z \otimes \mathrm{Id})(\Theta_{\Bp,\beta}^+(u)) = \sum_{\gamma\in \BQ_+} (\mathrm{Id} \otimes \mathrm{Ad}_{T_{\Bp}^+(u)}^{-1})(\BR_{\beta-\gamma}^{*}(z)) \times (\mathrm{Ad}_{T_{\Bp}^+(zu)} \otimes \mathrm{Id})(\BR_{\gamma}^-(z)).  $$
   We argue as in Step 2(iii) to obtain the desired polynomiality in $u$.
\end{rem}
To apply the theorem, we need representations of the quotient algebras $\mathbb{A}_{\Bp}^{\pm}$.
 The next result is well-known and dates back to \cite[\S 6]{BeK}.  Given a formal Laurent series $f(u) = \sum_{n\in \BZ}f_nu^n$ and $k \in \BZ$, let $f(u)_{\geq k}$ denote $\sum_{n\geq k}f_n u^n$, which is a Laurent series in $u$. Similarly $f(u)_{<k}$ is defined as a Laurent series in $u^{-1}$.
\begin{lem}  \label{lem: quotient qaf} 
Let $\varphi: U_q^{\geq}(\Gaff) \longrightarrow \mathrm{End}(W)$ be a finite-dimensional representation.
\begin{itemize}
    \item[(1)] Let $i \in I,\, k \in \BZ$ and $f(u) \in \BC[u]$ be a Drinfeld polynomial. Then the following two vector-valued formal Laurent series are Laurent expansions at $u=0$ and $u=\infty$ respectively of the same rational function of $u$: 
    $$ \varphi(x_i^+(u)_{\geq k}) \in \mathrm{End}(W)((u)), \quad -\varphi(x_i^+(u)_{<k}) \in \mathrm{End}(W)((u^{-1})). $$
    We have $f(u) \varphi(x_i^+(u)) = 0$ if and only if $f(u) \varphi(x_i^+(u)_{\geq k})$ is a Laurent polynomial in $u$, if and only if $f(u) \varphi(x_i^+(u)_{<k})$ is a Laurent polynomial in $u$. 
    \item[(2)] There exists an $I$-tuple of Drinfeld polynomials $\Bp$ such that the restriction of $\varphi$ to $U_q^+(\Gaff)$ factorizes through the quotient map $\pi_{\Bp}^+$.
\end{itemize}
Similar statements hold true for finite-dimensional representations of $U_q^{\leq}(\Gaff)$.
\end{lem}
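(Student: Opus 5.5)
The plan is to reduce everything to the Drinfeld relation between $\phi_j^{\varepsilon}$ and $x_i^+$, together with finite-dimensionality of $W$. I would prove Part (1) first and then deduce Part (2) from it.

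\textbf{Part (1).} Write $X(u) := \varphi(x_i^+(u))$. I would look at the diagonal relation for $i=j$ and the positive sign. Combining it with $[\phi_{i,m}^{\pm},\cdot]$ is not available here, since we are inside $U_q^{\geq}(\Gaff)$. However, the relation $x_{i,m+1}^+ x_{i,n}^+ - q^{b_{ii}} x_{i,m}^+ x_{i,n+1}^+ = q^{b_{ii}} x_{i,n}^+ x_{i,m+1}^+ - x_{i,n+1}^+ x_{i,m}^+$ alone does not give a recursion. So I would use the $\phi$-relation instead. Since $\phi_{i,0}^+$ is invertible, the relation with $m=-1$, where $\phi_{i,-1}^+=0$, gives
$$\phi_{i,0}^+ x_{i,n+1}^+ (\phi_{i,0}^+)^{-1} = q^{b_{ii}} x_{i,n+1}^+,$$
which is only the weight relation. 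The better route is the standard one of \cite{BeK}. Consider the finite-dimensional span $V_{\ge k}\subset \mathrm{End}(W)$ of the coefficients $\varphi(x_{i,n}^+)$, $n\in\BZ$. The shift $x_{i,n}^+\mapsto x_{i,n+1}^+$ is implemented, up to the invertible element $\phi_{i,0}^+$, by commutation with $\phi_{i,1}^+$ (equivalently with $h_{i,1}$): indeed $[h_{i,1}, x_{i,n}^+] = c\, x_{i,n+1}^+$ with $c=[b_{ii}]_q\neq 0$. So $\mathrm{ad}\,\varphi(h_{i,1})$ is a linear operator $S$ on the finite-dimensional space $\mathrm{End}(W)$ with $c^{-1}S\varphi(x_{i,n}^+) = \varphi(x_{i,n+1}^+)$ for all $n$. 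Let $g$ be the minimal polynomial of $c^{-1}S$ on the span of the $\varphi(x^+_{i,n})$. Because $c^{-1}S$ shifts surjectively in both directions, it is invertible on this span, so $g(0)\neq 0$. Then $g(u^{-1})X(u)=0$ coefficientwise. This gives exactly the statement that the two truncations are expansions of one rational function: $X(u)_{\ge k}$ is of the form $P(u)/g^\vee(u)$, and the two-sided annihilation identity $g^\vee(u)X(u)=0$ means $g^\vee X_{\ge k} = -g^\vee X_{<k}$ is a Laurent polynomial.

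For the equivalences, I would note three things. If $fX=0$, then $fX_{\ge k}=-fX_{<k}$ lies in both $\mathrm{End}(W)((u))$ and $\mathrm{End}(W)((u^{-1}))$, so it is a Laurent polynomial. Conversely, if $fX_{\ge k}$ is a Laurent polynomial, then $X_{\ge k}$ is the expansion at $0$ of a rational function with poles among the roots of $f$. Since $X_{\ge k}$ and $-X_{<k}$ expand the same rational function $F$, we get that $fX = f\cdot(\text{expansion at }0 \text{ of } F) - f\cdot(\text{expansion at }\infty \text{ of } F)$. Both expansions of the Laurent polynomial $fF$ agree, so $fX=0$. The third equivalence is symmetric.

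\textbf{Part (2).} Take $\Bp_i := $ the Drinfeld normalization of $g_i^\vee$, obtained from the minimal polynomial for each $i$. Then $\Bp_i(u)X_i(u)=0$, so the generators of the ideal defining $\mathbb{A}_{\Bp}^+$ act by zero, and $\varphi|_{U_q^+(\Gaff)}$ factors through $\pi_{\Bp}^+$. The $U_q^{\leq}$ case is identical with $x^-$.

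\textbf{Main obstacle.} The delicate point is getting the shift operator: I need the relation $[h_{i,1},x_{i,n}^+]=\frac{[b_{ii}]_q}{1}x_{i,n+1}^+$ inside $U_q^{\geq}(\Gaff)$ in the paper's normalization. I would derive it from the $\phi$–$x$ relation by taking the logarithm of $\phi_i^+(u)$.
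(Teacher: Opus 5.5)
Your argument is correct and is the standard shift-operator argument for this well-known fact; the paper gives no proof of its own and only refers to Beck--Kac (Section 6), so there is no separate route to compare against. The step you flagged as the main obstacle takes one line. Since $\phi_{i,0}^-=(\phi_{i,0}^+)^{-1}$ and $\phi_{i,1}^+=(q-q^{-1})\phi_{i,0}^+h_{i,1}$, the element $h_{i,1}$ lies in $U_q^{\geq}(\Gaff)$. Your early remark that the $\phi$-relations are unavailable there is therefore mistaken, though your proof does not rely on it. The $m=0$ case of the $\phi$--$x$ relation, combined with its $m=-1$ case $\phi_{i,0}^+x_{i,n}^+=q^{b_{ii}}x_{i,n}^+\phi_{i,0}^+$, gives $[h_{i,1},x_{i,n}^+]=[b_{ii}]_q\,x_{i,n+1}^+$. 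Here $[b_{ii}]_q=[2d_i]_q\neq 0$ because $q$ is not a root of unity.
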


Given a representation $\varphi: U_q^+(\Gaff) \longrightarrow \mathrm{End}(W)$, let us make sense of the coefficients of the power series $(\mathrm{Id}\otimes \varphi)(\BR^{\pm}(z))$. For $\beta \in \BQ$, we define 
$$\mathbb{E}_{W,\beta} := \{ f \in \mathrm{End}(W)\ |\ \rho(K_i) \circ f = q^{(\alpha_i,\beta)} f \circ \rho(K_i) \quad \textrm{for $i \in I$}   \}. $$
Clearly, $\varphi$ sends $U_q^+(\Gaff)_{\beta}$ to $\mathbb{E}_{W,\beta}$. The sum of the subspaces $\mathbb{E}_{W,\beta}$ for $\beta \in \BQ$ is direct and forms a subalgebra of $\mathrm{End}(W)$ that contains the image of $\varphi$. Let $\mathbb{E}_W$ denote the resulting $\BQ$-graded algebra, so that $(\mathrm{Id} \otimes \varphi)(\BR^{\pm}(z))\in (U_q^-(\Gaff) \ttimes \mathbb{E}_W)[[z]]$. If the restriction of the representation $\varphi$  to $U_q^+(\Gaff)$ factorizes through certain $\pi_{\Bp}^+$, then we obtain from Theorem \ref{thm: conjugation formula qaf} two equations in $(U_q^-(\Gaff)\ttimes \mathbb{E}_W)[[z]]$:
\begin{align*}
        (\mathrm{Id} \otimes \varphi)(\BR^-(z)) &= \left(\mathrm{Ad}_{T_{\Bp}^+(z)}^{-1} \circ \tau_z\otimes  \varphi\right)(\Theta_{\Bp}^+(1)), \\
        (\mathrm{Id} \otimes \varphi)(\BR^+(z)) &= \left(\mathrm{Ad}_{T_{\Bp}^-(z)}^{-1} \circ \tau_{z^{-1}}\otimes \varphi\right)\left(\Theta_{\Bp}^-(1)^{-1}\right).
    \end{align*}

\begin{prop}   \label{prop: rationality qaf}
    Let $\varphi: \qaf \longrightarrow \mathrm{End}(V)$ and $\psi: \qaf \longrightarrow \mathrm{End}(W)$ be two finite-dimensional representations. Then both power series $(\varphi \otimes \psi)(\BR^{\pm}(z))$ are Taylor expansions at $z=0$ of $\mathrm{End}(V\otimes W)$-valued rational functions of $z$. Any pole of the rational functions $(\varphi\otimes \psi)(\BR^-(z))$ and $(\varphi\otimes \psi)(\BR^+(z^{-1}))$ is necessarily the ratio of a pole of the rational function $\varphi(x_i^-(z)_{>0})$ to a pole of $\psi(x_i^+(z)_{\geq 0})$ for certain $i \in I$. 
\end{prop}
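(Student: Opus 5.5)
We apply the conjugation formula of Theorem \ref{thm: conjugation formula qaf} (in its evaluated form displayed just before the proposition) with one tensor factor evaluated at $\psi$. First, Lemma \ref{lem: quotient qaf}(2) gives an $I$-tuple $\Bp$ of Drinfeld polynomials such that $\psi|_{U_q^+(\Gaff)}$ factors through $\pi_{\Bp}^+$. We take the minimal choice: $\Bp_i(u)$ is the denominator of the rational function $\psi(x_i^+(u)_{\geq 0})$ from Lemma \ref{lem: quotient qaf}(1). Then
$$(\varphi\otimes\psi)(\BR^-(z)) = \left(\varphi\circ \mathrm{Ad}_{T_{\Bp}^+(z)}^{-1}\circ\tau_z \otimes \psi\right)(\Theta_{\Bp}^+(1)).$$
By Theorem \ref{thm: Theta qaf}, $\Theta_{\Bp}^+(1)$ is a sum over $\beta\in\BQ_+$ of components in $U_q^-(\Gaff)_{-\beta}\otimes U_q^+(\Gaff)_\beta$. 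Since $W$ is finite-dimensional, $\psi$ vanishes on $U_q^+(\Gaff)_\beta$ except for finitely many $\beta$. Hence only finitely many components survive, and each is a finite sum of tensors $x\otimes y$ with $x$ a product of generators $x_{i_1,m_1}^-\cdots x_{i_n,m_n}^-$.

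Second, we compute $\varphi\circ\mathrm{Ad}^{-1}_{T_{\Bp}^+(z)}\circ\tau_z$ on such monomials. By \eqref{rel: Tp x qaf},
$$T_{\Bp}^+(z)^{-1}x_j^-(u)T_{\Bp}^+(z) = \Bp_j^*(u^{-1}z)^{-1}x_j^-(u).$$
Since $\tau_z$ multiplies $x_{j,m}^-$ by $z^m$, the element $x_{j,m}^-$ is sent to the $u^m$-coefficient of $\Bp_j^*(u^{-1}z)^{-1}x_j^-(uz)$, viewed as a series in $z$. Factoring $\Bp_j^*(w)=\prod_k(1-a_{jk}^{-1}w)$ and expanding the geometric series, this coefficient is
$$z^m\sum_{n\ge0}c_n\, x^-_{j,m+n}\,,$$
where the $c_n$ are the coefficients of $\prod_k (1-a_{jk}^{-1}z\,\cdot)^{-1}$ acting as a shift. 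After applying $\varphi$, this is expressed through the Laurent series $\varphi(x_j^-(w)_{\ge m})$, evaluated by a Hadamard-type product against $\prod_k (1-a_{jk}^{-1} z/w)^{-1}$.

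Third, we show this is rational in $z$. By Lemma \ref{lem: quotient qaf}(1) for $U_q^{\leq}$, $\varphi(x_j^-(w)_{\geq m})$ is the expansion at $w=0$ of a rational function whose poles $b$ are those of $\varphi(x_j^-(w)_{>0})$, a set independent of $m$. Writing it in partial fractions, $\varphi(x^-_{j,m+n})$ is a finite sum of terms $P(n)b^{-n}$ with $P$ a polynomial. Summing the geometric-type series in $n$ then gives a rational function of $z$ whose poles lie at $z=a_{jk}b$, i.e. at $b/a_{jk}^{-1}$. Since $a_{jk}^{-1}$ is a pole of $\psi(x_j^+(u)_{\ge0})$ (the roots of $\Bp_j$), each pole is a ratio of a pole $b$ of $\varphi(x_i^-)$ to a pole of $\psi(x_i^+)$, as claimed; we will fix the exact ratio convention by tracking $\Bp_j$ versus $\Bp_j^*$. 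Products of finitely many such rational functions are rational, with poles in the same set, so $(\varphi\otimes\psi)(\BR^-(z))$ is rational with the asserted poles. The case of $\BR^+(z^{-1})$ is handled in the same way using the fourth formula of the theorem together with the substitution $z\mapsto z^{-1}$ and $\Theta^-_{\Bp}(1)^{-1}$, which is again a finite unitriangular sum after evaluation.

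The main obstacle is the second and third steps: making the substitution $\mathrm{Ad}^{-1}_{T^+_{\Bp}(z)}\circ\tau_z$ explicit on products of several $x^-$ generators, and verifying that the resulting Hadamard-type sums are genuinely rational with only the predicted poles. This includes checking whether the truncation at $>0$ versus $\ge m$ introduces extra polynomial terms; those terms do not create poles.
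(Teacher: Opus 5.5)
Your proposal is correct and follows the paper's proof essentially step for step. You use the same minimal $\Bp$ given by the denominators of $\psi(x_i^+(u)_{\geq 0})$ and the same first conjugation formula. You reduce, via the finitely many surviving weight components and the homomorphism $\varphi\circ\mathrm{Ad}_{T_{\Bp}^+(z)}^{-1}\circ\tau_z$, to single generators $x_{j,m}^-$, and you get rationality from Lemma~\ref{lem: quotient qaf}(1). The only difference is cosmetic: you take partial fractions on the $\varphi$-side and sum a Hadamard-type series, whereas the paper decomposes $1/\Bp_j^*$ and writes the image as a combination of $\partial_z^n\bigl(z^n x_j^-(a_{jk}^{-1}z)_{\geq m-n}\bigr)$; both give poles at $z=a_{jk}b$.

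Two small repairs are needed. First, the intermediate series should be the $u^m$-coefficient of $\Bp_j^*(u^{-1})^{-1}x_j^-(uz)$; your $\Bp_j^*(u^{-1}z)^{-1}$ counts $z$ twice, although your displayed $z^m\sum_n c_n x_{j,m+n}^-$ with $c_n$ absorbing $z^n$ is right. Second, a possible pole at $z=0$ from generators with $m<0$ is excluded because the left-hand side is a power series in $z$; the paper sidesteps this by using only $x_{j,m}^-$ with $m\in\BN$.
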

\begin{proof}
    We consider the case of $\BR^-(z)$, as the other case is parallel. By Lemma \ref{lem: quotient qaf}, the restriction of $\psi$ to $U_q^+(\Gaff)$ factorizes through $\pi_{\Bp}^+$ for an $I$-tuple $\Bp = (\Bp_i(u))_{i\in I}$ of Drinfeld polynomials; we take $\Bp_i(u)$ to be the denominator of the vector-valued rational function $\psi(x_i^+(u)_{\geq 0})$.  Then by the first conjugation formula, 
    $$ (\varphi\otimes \psi)(\BR^-(z)) = (\varphi \circ \mathrm{Ad}_{T_{\Bp}^+(z)}^{-1} \circ \tau_z \otimes \psi)(\Theta_{\Bp}^+(1)).   $$
    Since $\psi$ is finite-dimensional, by Theorem \ref{thm: Theta qaf} the right-hand side lies in the subalgebra of $\mathrm{End}(V\otimes W)[[z]]$ generated by the following vectors 
    $$ \varphi \circ \mathrm{Ad}_{T_{\Bp}^+(z)}^{-1} (x_{i,k}^-z^k) \otimes \psi(x_{j,n}^+) \quad \textrm{for $i,j \in I$ and $k,\, n \in \BN$}.  $$
    It is therefore enough to prove the rationality of $\varphi\circ \mathrm{Ad}_{T_{\Bp}^+(z)}^{-1} (x_{i,k}^-z^k)$ for all $i \in I$ and $k \in \BN$. Let $\sigma$ denote the algebra automorphism of $U_q^-(\Gaff)$ which sends $x_{j,m}^-$ to $x_{j,m+1}^-$ for $j \in I$ and $m \in \BZ$. Then by Eq.\eqref{rel: Tp x qaf} we have
    $$ T_{\Bp}^+(z)^{-1} x_{i,k}^- T_{\Bp}^+(z) = \frac{1}{\Bp_i^*(\sigma z)}(x_{i,k}^-). $$
    If $\Bp_i(u) = 1$, then the rationality is clear. Assume $\Bp_i(u) \neq 1$  and take the partial fraction decomposition 
    $$ \frac{1}{\Bp_i^*(z)} = \sum_{(b,n)\in \Lambda} \frac{\lambda_{b,n}}{(1-bz)^{n+1}} $$
    where $\Lambda$ is a finite subset of $\BC^{\times} \times \BN$, and $\lambda_{b,n} \in \BC^{\times}$ for any $(b,n) \in \Lambda$. Each such $b$ is a pole of the rational function $\psi(x_i^+(z)_{\geq 0})$. We have 
    \begin{align*}
        \frac{1}{(1-b\sigma z)^{n+1}}(x_{i,k}^-z^k) &= \frac{\sigma^{-n}}{n! b^n} \partial_z^n\left(\frac{1}{1-b\sigma z}(x_{i,k}^-z^k)\right) = \frac{\partial_z^n}{n!}\left(b^{-k}z^n \sum_{m\geq k-n}x_{i,m}^-(bz)^m\right), \\
        \mathrm{Ad}_{T_{\Bp}^+(z)}^{-1} (x_{i,k}^-z^k) &= \sum_{(b,n)\in \Lambda} \frac{\lambda_{b,n}}{n! b^k} \partial_z^n\left(z^n x_i^-(bz)_{\geq k-n} \right).
    \end{align*}
    The rationality follows from the negative counterpart of Lemma \ref{lem: quotient qaf}(1).
\end{proof}

The next result is essentially due to Hernandez \cite[Proposition 4.4]{H1}.

\begin{prop}  \label{prop: rationality qaf inverse}
    In the situation of Proposition \ref{prop: rationality qaf}, the two $\mathrm{End}(V\otimes W)$-valued rational functions $(\varphi\otimes \psi)(\BR^-(z))$ and $(\varphi\otimes \psi)(\BR^+(z^{-1}))$ are inverses of each other.
\end{prop}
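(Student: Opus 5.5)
The plan is to work with the rational functions given by Proposition~\ref{prop: rationality qaf}, rather than with formal series. This is necessary because $(\varphi\otimes\psi)(\BR^-(z))$ is a power series in $z$ while $(\varphi\otimes\psi)(\BR^+(z^{-1}))$ is a power series in $z^{-1}$, so their product does not make sense formally. Set
$$A(z) := (\varphi\otimes\psi)(\BR^-(z)), \qquad B(z) := (\varphi\otimes\psi)(\BR^+(z^{-1})),$$
both viewed as $\mathrm{End}(V\otimes W)$-valued rational functions. The goal is to show that $P(z) := A(z)B(z)$ is the identity at generic $z\in\BC^\times$; the identity of rational functions then follows.

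\textbf{Step 1: intertwining relations.} Take the first intertwining equation of Proposition~\ref{prop: unitriangular R qaf}(2) and the fourth one with $z$ replaced by $z^{-1}$. Both involve the same operator $(\tau_z\otimes\mathrm{Id})\circ\Delta(h_{i,s})$. Evaluating at $\varphi\otimes\psi$ and writing $H_{i,s}(z) := z^s\varphi(h_{i,s})\otimes 1 + 1\otimes\psi(h_{i,s})$, one gets
$$H_{i,s}(z)\,A(z) = A(z)\,D_{i,s}(z), \qquad B(z)\,H_{i,s}(z) = D_{i,s}(z)\,B(z),$$
where $D_{i,s}(z) = (\varphi\otimes\psi)\bigl((\tau_z\otimes\mathrm{Id})\Delta(h_{i,s})\bigr)$ is a Laurent polynomial in $z$. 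These are identities of Laurent series, first in $z$ and then in $z^{-1}$. Since every ingredient is rational in $z$, they are identities of rational functions. Combining them gives $[H_{i,s}(z),P(z)]=0$ for all $i\in I$ and $s\neq 0$. In addition, $P(z)$ commutes with $K_i\otimes K_i$, because each coefficient of $\BR^\pm$ has total weight zero.

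\textbf{Step 2: unitriangularity.} Decompose $V=\bigoplus V_\gamma$ and $W=\bigoplus W_{\gamma'}$ into generalized $\ell$-weight spaces, that is, joint generalized eigenspaces of the commuting operators $K_i$ and $h_{i,s}$. Write $\gamma_{i,s}$ and $\gamma'_{i,s}$ for the corresponding eigenvalues. By Proposition~\ref{prop: unitriangular R qaf}(1), both $A(z)-1$ and $B(z)-1$ are sums of terms sending $V_\lambda\otimes W_\mu$ into $V_{\lambda-\beta}\otimes W_{\mu+\beta}$ with $\beta\in\BQ_+\setminus\{0\}$, where $\lambda,\mu$ denote ordinary weights. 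Hence $P(z)-1$ is strictly lower triangular for the partial order given by the $V$-weight. On $V_\gamma\otimes W_{\gamma'}$ the operator $H_{i,s}(z)$ has the single generalized eigenvalue $z^s\gamma_{i,s}+\gamma'_{i,s}$.

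\textbf{Step 3: genericity argument (main obstacle).} Take two distinct pairs $(\gamma,\gamma')$ and $(\delta,\delta')$ whose $V$-weights differ by a nonzero element of $\BQ_+$. Then $\gamma\neq\delta$ as $\ell$-weights; if the $h$-parts of $\gamma$ and $\delta$ agreed, then also $\gamma'$ and $\delta'$ would need to be compared. I would argue as follows:
\begin{itemize}
\item If $(\gamma_{i,s},\gamma'_{i,s})\neq(\delta_{i,s},\delta'_{i,s})$ for some $(i,s)$, then $z^s\gamma_{i,s}+\gamma'_{i,s}=z^s\delta_{i,s}+\delta'_{i,s}$ holds for at most finitely many $z$ (or never).
\item Otherwise $\gamma$ and $\delta$ agree on all $h_{i,s}$ and differ only in their $K$-parts. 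The weight-zero operator $K_i\otimes 1$ does not commute with $P$, so it cannot separate these spaces directly. Instead I would use the triangular structure: restrict to the coarser decomposition into blocks with fixed $h$-parts. Within such a block, $P-1$ still strictly lowers the $V$-weight and is nilpotent. To conclude $P=1$ I would then follow Hernandez~\cite[Proposition 4.4]{H1} and use the inverse series directly, rather than relying only on commutation.
\end{itemize}
Since $V$ and $W$ have finitely many $\ell$-weights, only finitely many $z$ are excluded. For generic $z$, $P(z)$ commutes with all $H_{i,s}(z)$, so it preserves the joint generalized eigenspaces $V_\gamma\otimes W_{\gamma'}$. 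But $P(z)-1$ maps each such space into a sum of spaces with different eigenvalue data, so $P(z)-1=0$.

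The delicate point is exactly the separation in Step 3. Distinct pairs with $V$-weight shift $\beta\neq0$ must yield distinct eigenvalue tuples of the $H_{i,s}(z)$ for generic $z$. I would first try to prove this by observing that the eigenvalue tuples of the $h_{i,s}$ for $s>0$ and $s<0$ (the series $\phi^\pm$) determine the weight through $\phi_{i,0}^\pm$, up to the sign ambiguity in type-1 representations, which can be handled by twisting. If that fails, the fallback is to invoke Hernandez's argument~\cite[Proposition 4.4]{H1} directly.
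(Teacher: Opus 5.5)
Your Steps 1 and 2 are correct. The first intertwining equation of Proposition~\ref{prop: unitriangular R qaf}(2), together with the fourth one with $z$ replaced by $z^{-1}$, gives $[H_{i,s}(z),P(z)]=0$ as rational functions, and $P(z)-1$ strictly lowers the $V$-weight. The gap is Step 3, which carries the whole content of the statement, and you leave it open.

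In your second bullet you allow $\gamma$ and $\delta$ to have identical $h$-parts while their weights differ by $\beta\in\BQ_+\setminus\{0\}$. If that happened, commuting with the $H_{i,s}(z)$ would give no information inside that block. The observation that $P-1$ is nilpotent there does not force $P=1$.

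Your fallback of citing \cite[Proposition 4.4]{H1} directly does not repair this. Hernandez's proof rests on \cite[Proposition 2.14]{H1}: ratios of $\ell$-weights of a \emph{simple} module are monomials in the $A_{i,a}$. A finite-dimensional module need not be a direct sum of simple ones, so this is not available for arbitrary $V$ and $W$. That is exactly why the paper first decomposes $V$ and $W$ into summands with the monomial property via \cite[Proposition 3.8]{MY}, and only then runs the proofs of \cite[Propositions 3.8 \& 4.4]{H1} on those summands.

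Your first idea does close the gap once you use the right fact. On a finite-dimensional module, the eigenvalues $\gamma_i^{\pm}(u)$ of $\phi_i^{\pm}(u)$ on a joint eigenvector in $V_\gamma$ are the expansions at $u=0$ and $u=\infty$ of a single rational function. This follows from Lemma~\ref{lem: quotient qaf}: if $f(u)\varphi(x_i^+(u))=0$, then $f(u)$ also annihilates $\varphi(\phi_i^+(u))-\varphi(\phi_i^-(u))=(q_i-q_i^{-1})[\varphi(x_i^+(u)),\varphi(x_{i,0}^-)]$.

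Write $\gamma_i^+(u)=\gamma_{i,0}^+F_i(u)$, where $F_i$ is determined by the eigenvalues of the $h_{i,s}$ with $s>0$. The constant term of the expansion at $\infty$ is $\gamma_{i,0}^+F_i(\infty)$. Since $\phi_{i,0}^+\phi_{i,0}^-=1$, this constant term equals $(\gamma_{i,0}^+)^{-1}$, so $(\gamma_{i,0}^+)^2=F_i(\infty)^{-1}$.

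Hence two $\ell$-weights of $V$ with the same $h$-parts have $K_i$-eigenvalues that agree up to sign. If their weights differ by $\beta$, this gives $q^{2(\alpha_i,\beta)}=1$ for all $i$, and so $\beta=0$ because $q$ is not a root of unity. Your second bullet therefore never occurs, and no twisting is needed. Each pair of pairs is bad for only finitely many $z$, and your conclusion $P(z)=1$ for generic $z$ follows.

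Completed this way, your argument is a genuinely different proof from the paper's. It uses only triangularity with respect to ordinary weights plus rationality of $\ell$-weights, and it avoids both the Mukhin--Young decomposition and Hernandez's $A_{i,a}$-ordering of $\ell$-weights. The paper's route instead obtains the result as an instance of Hernandez's theory of algebraic stable maps.
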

\begin{proof}
    By \cite[Proposition 3.8]{MY}, each finite-dimensional representation is a direct sum of submodules with the following property as a replacement of \cite[Proposition 2.14]{H1}: the ratio of two $\ell$-weights of this submodule is always a Laurent monomial in the generalized simple roots. The proofs of \cite[Propositions 3.8 \& 4.4]{H1} work for these submodules.
\end{proof}

\begin{rem}
  (1) When the two representations $\varphi$ and $\psi$ are irreducible, the rationality of $(\varphi\otimes \psi)(\BR^{\pm}(z))$ was established in \cite[Theorem 3.9, Proposition 4.4]{H1} by a different method. Our approach is more direct and drops the irreducibility assumption. 
 
  (2) Each representation in the category $\mathcal{O}$ for the quantum affine algebra \cite[\S 4.3]{H0} is naturally a union of finite-dimensional $U_q^+(\Gaff)$-modules, so the conjugation formulas of Theorem \ref{thm: conjugation formula qaf} are still applicable. Propositions \ref{prop: rationality qaf}--\ref{prop: rationality qaf inverse} hold true for arbitrary representations $\varphi$ and $\psi$ in this category with the same proofs. 

  (3) Theorem \ref{thm: conjugation formula qaf} and Proposition \ref{prop: rationality qaf} can be applied to the category $\mathcal{O}^{sh}$ of representations of shifted quantum affine algebras \cite[\S 4.4]{H2} for the same reason as (2). We expect Proposition \ref{prop: rationality qaf inverse} to be true in the shifted case. The main obstacle is the lack of Drinfeld--Jimbo coproduct for shifted quantum affine algebras to make sense of the triangularity of $\ell$-weight vectors in \cite[Theorem 2.16]{H1}.
\end{rem}

  \begin{example}  \label{ex: Ding-Frenkel}
        Let $\Glie = \mathfrak{sl}_{r+1}$. On the vector space $\BC^{r+1}$ there is a representation $\varphi$ of the quantum affine algebra $\qaf$ given by:
        $$ \varphi(x_{i,n}^+) = q^{in} E_{i,i+1},\quad \varphi(x_{i,n}^-) = q^{in} E_{i+1,i} \quad \textrm{for $1\leq i \leq r$,} $$
        where the $E_{ij} \in \mathrm{End}(\BC^{r+1}) = \mathrm{Mat}_{r+1}(\BC)$ are elementary matrices.
        This is an representation of highest $\ell$-weight  $Y_{i,1}$ in \cite[\S 5.4.1]{FR1}. Its restrictions to $U_q^{\pm}(\Gaff)$ factorize through $\pi_{\Bp}^{\pm}$ for the $I$-tuple $\Bp = (1-q^i u)_{1\leq i \leq r}$. From Example \ref{ex: Theta qaf} we get
        \begin{align*}
        &(\mathrm{Id}\otimes \varphi)(\BR_{\alpha_i}^-(z)) = (\mathrm{Ad}_{T_{\Bp}^+(z)}^{-1} \circ \tau_z \otimes \varphi)(\Theta_{\Bp,\alpha_i}^+(1)) \\
        &\qquad = (q^{-1}-q)q^{-i} z \mathrm{Ad}_{T_{\Bp}^+(z)}^{-1}( x_{i,1}^-) \otimes \varphi(x_{i,0}^+) = (q^{-1}-q)  \sum_{n\geq 1} x_{i,n}^-(q^{-i}z)^n \otimes E_{i,i+1},    \\
            &(\varphi \otimes \mathrm{Id})(\BR_{\alpha_i}^+(z)) = (\varphi \otimes \mathrm{Ad}_{T_{\Bp}^+(z)} \circ \tau_z)(\Theta_{\Bp,\alpha_i}^+(1))  \\
            &\qquad = (q^{-1}-q) q^{-i} \varphi(x_{i,1}^-) \otimes \mathrm{Ad}_{T_{\Bp}^+(z)}(x_{i,0}^+) = (q^{-1}-q) E_{i+1,i} \otimes \sum_{n\geq 0} x_{i,n}^+ (q^{-i}z)^n,  \\
            &(\varphi \otimes \mathrm{Id})(\BR_{\alpha_i}^-(z)) = (\varphi \otimes \mathrm{Ad}_{T_{\Bp}^-(z)} \circ \tau_{z^{-1}})(-\Theta_{\Bp,\alpha_i}^-(1)) \\
            &\qquad = \varphi(x_{i,0}^-) \otimes (q^{-1}-q) q^i z \mathrm{Ad}_{T_{\Bp}^-(z)}(x_{i,-1}^+) = (q^{-1}-q) E_{i+1,i} \otimes \sum_{n<0} x_{i,n}^+ (q^{-i} z^{-1})^n, \\
            & (\mathrm{Id} \otimes \varphi)(\BR_{\alpha_i}^+(z)) = (\mathrm{Ad}_{T_{\Bp}^-(z)}^{-1} \circ \tau_{z^{-1}} \otimes \varphi)(-\Theta_{\Bp,\alpha_i}^-(1)) \\
            &\qquad = (q^{-1}-q) q^i\mathrm{Ad}_{T_{\Bp}^-(z)}^{-1}(x_{i,0}^-) \otimes \varphi(x_{i,-1}^+) = (q^{-1}-q) \sum_{n\leq 0} x_{i,n}^- (q^{-i} z^{-1})^n \otimes E_{i,i+1}.
        \end{align*}
        The generating series $x_i^{\pm}(q^{-i}z)$ are recovered as off-diagonal entries of the unitriangular L-operators associated with $\varphi$.  This agrees with the Ding--Frenkel \cite[(3.23)]{DF} homomorphism from the Drinfeld new realization of the quantum affien algebra to its R-matrix realization. We comment that a direct computation of these off-diagonal entries without Theta series appeared in \cite[Proof of Lemma 3.6]{FM}.
    \end{example}

\section{Background on shifted Yangians}  \label{sec: shifted Yangians}
The goal of the rest of this paper is to adapt the conjugation formulas of Theorem \ref{thm: conjugation formula qaf} to the Yangian situation. For that purpose, we shall need the notion of shifted Yangians. In this section, we collect the basic properties of shifted Yangians and their representation theory.

Recall the finite-dimensional simple Lie algebra $\Glie$, its Cartan subalgebra $\Hlie$, and the set $I$ of Dynkin nodes. The dual Cartan subalgebra $\Hlie^*$ admits a basis consisting of simple roots $\alpha_i$ for $i \in I$.
Let $(\varpi_i^{\vee})_{i \in I}$ be the dual basis of $\Hlie$  with respect to the natural pairing $\langle, \rangle: \Hlie \times \Hlie^* \longrightarrow \BC$; the $\varpi_i^{\vee}$ are called {\it fundamental coweights}. The fundamental coweights generate an additive subgroup of $\Hlie$, called the coweight lattice, and denoted by $\BP^{\vee}$.
By a coweight, we mean an element $\mu$ of the coweight lattice. It is a $\BZ$-linear combination $\sum_{i\in I} m_i \varpi_i^{\vee}$ of the fundamental coweights with coefficients $m_i = \langle \mu, \alpha_i \rangle$. Call $\mu$ {\it dominant} if $m_i \in \BN$ for all $i \in I$; call $\mu$ {\it antidominant} if $-\mu$ is dominant. 

\subsection{Shifted Yangians} 
Given a coweight $\mu = \sum_{i\in I} m_i \varpi_i^{\vee}$, we define the shifted Yangian $Y_{\mu}(\Glie)$ to be the associative algebra with generators
 $$ x_{i,n}^{\pm},\quad \xi_{i,p} \quad \mathrm{for}\ (i, n, p) \in I \times \BN \times \BZ $$
 called Drinfeld generators, subject to the following relations \cite{Dri88,KWWY,BFN}: 
 \begin{gather*}
[\xi_{i,p}, \xi_{j,q}] = 0, \quad [x_{i,m}^+,x_{j,n}^-] = \delta_{ij} \xi_{i,m+n},   \\
[\xi_{i,p+1}, x_{j,n}^{\pm}] - [\xi_{i,p}, x_{j,n+1}^{\pm}] = \pm \frac{1}{2} b_{ij} (\xi_{i,p}x_{j,n}^{\pm} + x_{j,n}^{\pm} \xi_{i,p}),\\
[x_{i,m+1}^{\pm}, x_{j,n}^{\pm}] - [x_{i,m}^{\pm}, x_{j,n+1}^{\pm}] = \pm \frac{1}{2} b_{ij} (x_{i,m}^{\pm}x_{j,n}^{\pm} + x_{j,n}^{\pm} x_{i,m}^{\pm}), \\
\mathrm{ad}_{x_{i,0}^{\pm}}^{1-c_{ij}}(x_{j,0}^{\pm}) = 0\quad \mathrm{if}\ i \neq j,  \\
\xi_{i,-m_i- 1} = 1,\quad \xi_{i,p} = 0 \quad \mathrm{for}\ p < -m_i - 1. 
\end{gather*}
Here $\mathrm{ad}_x(y) := xy - yx$.
We define four families of generating series in a formal variable $u$, all indexed by $i \in I$:
 \begin{equation*} 
x_i^{\pm}(u) :=  \sum_{n\in \BN} x_{i,n}^{\pm} u^{-n-1},\quad \xi_i(u) :=  \sum_{p\in \BZ} \xi_{i,p} u^{-p-1},\quad \overline{\xi}_i(u) := u^{-m_i} \xi_i(u).
\end{equation*}
These are Laurent series in $Y_{\mu}(\Glie)((u^{-1}))$, with leading terms $x_{i,0}^{\pm} u^{-1},\ u^{m_i}$ and 1.

The shifted Yangian $Y_{\mu}(\Glie)$ admits a $\BQ$-grading, called its {\it weight grading}, defined by declaring the weights of the generators $x_{i,n}^+, x_{i,n}^-$ and $\xi_{i,p}$ to be $\alpha_i, -\alpha_i$ and $0$. Equivalently, for $\beta \in \BQ$, an element $x \in Y_{\mu}(\Glie)$ is of weight $\beta$ if and only if $[\xi_{i,-m_i}, x] = (\alpha_i,\beta) x$ for all $i \in I$.
Let $Y_{\mu}(\Glie)_{\beta}$ denote the subspace of elements of weight $\beta$. 

Let $z$ be another formal variable, referred to as {\it spectral parameter}. We have an algebra homomorphism $\tau_z: Y_{\mu}(\Glie) \longrightarrow Y_{\mu}(\Glie)[z]$ defined by
\begin{gather}  \label{rel: spectral shift formal Yangian}
    \tau_z(X_p) = \sum_{n\in \BN} \binom{p}{n}  X_{p-n} z^n \quad \mathrm{for}\ X \in \{x_i^{\pm}, \xi_i\}\ \mathrm{and}\ p \in \BZ.
\end{gather}
Here it is understood that $x_{i,p}^{\pm} = 0$ for $p < 0$. In terms of generating series, we have 
$$ \tau_z(x_i^{\pm}(u)) = x_i^{\pm}(u-z), \quad \tau_z(\xi_i(u)) = \xi_i(u-z). $$
Evaluating $z$ at complex numbers, we get a one-parameter family of algebra automorphisms $\tau_a$ of $Y_{\mu}(\Glie)$ satisfying $\tau_a \circ \tau_b = \tau_{a+b}$ and $\tau_0 = \mathrm{Id}$ for $a, b \in \BC$. These are called {\it spectral parameter automorphisms}. 

In the shifted Yangian $Y_{\mu}(\Glie)$ let us define five subalgebras by generating subsets:
\begin{gather*}
Y_{\mu}^+(\Glie) = \langle x_{i,n}^+\rangle_{(i,n)\in I\times \BN},\quad Y_{\mu}^0(\Glie) = \langle \xi_{i,p}\rangle_{(i,p)\in I \times \BZ},\quad Y_{\mu}^-(\Glie) =  \langle x_{i,n}^-\rangle_{(i,n)\in I\times \BN}, \\
Y_{\mu}^{\geq}(\Glie) =  \langle x_{i,n}^+, \xi_{i,p}\rangle_{(i,n,p)\in I\times \BN \times \BZ},\quad Y_{\mu}^{\leq}(\Glie) = \langle x_{i,n}^-, \xi_{i,p}\rangle_{(i,n,p)\in I\times \BN \times \BZ}.
\end{gather*}
 The weight grading and the spectral parameter automorphisms restrict to these five subalgebras.
By definition, the subalgebra $Y^0_{\mu}(\Glie)$ is commutative.

\subsection{Drinfeld--Jimbo coproduct and its shifts}  \label{sss: Yangian coproduct}
In this subsection, we recall two families of algebra homomorphisms relating various shifted Yangians: the shift homomorphisms and the shifted coproducts.  

 The zero-shifted Yangian $Y_0(\Glie)$, also denoted by $Y(\Glie)$, is precisely the ordinary Yangian in its Drinfeld new realization \cite{Dri88} with deformation parameter $\hbar = 1$. Let us first recall its Hopf algebra structure. The Yangian contains the universal enveloping algebra $U(\Glie)$ as a Hopf subalgebra by identifying the $x_{i,0}^{\pm}$ with root vectors in the Lie algebra $\Glie$ associated to the roots $\pm \alpha_i$. Let $\Phi \subset \BQ_+$ denote the set of positive roots of $\Glie$. One can extend $x_{i,0}^{\pm} =: x_{\alpha_i}^{\pm}$ to root vectors $x_{\beta}^{\pm} \in \Glie_{\pm \beta}$ for $\beta \in \Phi$ suitably normalized with respect to an invariant bilinear form of $\Glie$. Then we have the following coproduct formulas (see \cite[\S 4]{GNW} for a proof):
    \begin{gather}   
\Delta(x_{i,0}^{\pm}) = x_{i,0}^{\pm} \otimes 1 \otimes x_{i,0}^{\pm}, \quad \Delta(\xi_{i,0}) = \xi_{i,0} \otimes 1 + 1 \otimes \xi_{i,0}, \\
        \Delta(\xi_{i,1}) = \xi_{i,1} \otimes 1 + 1 \otimes \xi_{i,1} + \xi_{i,0} \otimes \xi_{i,0} - \sum_{\beta \in \Phi} (\alpha_i, \beta) x_{\beta}^- \otimes x_{\beta}^+.  \label{equ: coproduct Yangian xi}
    \end{gather}
We refer to $\Delta$ as the Drinfeld--Jimbo coproduct, to distinguish it from the Drinfeld formal coproduct introduced in \cite{GTLW}. 

For antidominant coweights $\epsilon$ and $\eta$, we have an injective algebra morphism $\iota_{\epsilon,\eta}^{\mu}: Y_{\mu}(\Glie) \longrightarrow Y_{\mu+\epsilon+\eta}(\Glie)$, called {\it shift homomorphism} \cite[Corollary 3.16]{coproduct}:
\begin{gather}   \label{def: shift homo}
x_{i,n}^+ \mapsto x_{i,n-\langle\epsilon,\alpha_i\rangle}^+, \quad x_{i,n}^- \mapsto x_{i,n-\langle\eta,\alpha_i\rangle}^-,\quad \xi_{i,p} \mapsto \xi_{i,p-\langle\epsilon+\eta,\alpha_i\rangle}.
\end{gather}
It preserves the normalized series $\overline{\xi}_i(z) \mapsto \overline{\xi}_i(z)$. 
It also induces canonical identifications of subalgebras of shifted Yangians:
$$ Y_{\mu}^{\pm}(\Glie) \cong Y_{\nu}^{\pm}(\Glie),\qquad x_{i,n}^{\pm} \longrightarrow x_{i,n}^{\pm}. $$

The Drinfeld--Jimbo coproduct of the ordinary Yangian can be extended to shifted Yangians in a compatible way with the above shift homomorphisms.

\begin{theorem}\cite[Theorem 4.12, Proposition 4.14]{coproduct}  \label{thm: coproduct shifted Yangian}
There exists a unique family of algebra homomorphisms for all coweights $\mu, \nu$ $$\Delta_{\mu,\nu}: Y_{\mu+\nu}(\Glie) \longrightarrow Y_{\mu}(\Glie) \otimes Y_{\nu}(\Glie)$$ 
that satisfy the following properties: 
 \begin{itemize}
     \item[(1)] The zero-shifted case $\Delta_{0,0}$ is the coproduct of the ordinary Yangian.
     \item[(2)] If $0\leq m < - \langle \mu, \alpha_i\rangle$  and $0 \leq n < -\langle \nu,\alpha_i\rangle$, then 
     $$\Delta_{\mu,\nu}(x_{i,m}^+) = x_{i,m}^+ \otimes 1,\quad \Delta_{\mu,\nu}(x_{i,n}^-) = 1 \otimes x_{i,n}^-.  $$
     \item[(3)] It is compatible with shift homomorphisms for antidominant coweights $\epsilon$ and $\eta$:  
     $$ (\iota_{\epsilon,0}^{\mu} \otimes \iota_{0,\eta}^{\nu}) \circ \Delta_{\mu,\nu} = \Delta_{\mu+\epsilon,\nu+\eta} \circ \iota_{\epsilon,\eta}^{\mu+\nu}: Y_{\mu+\nu}(\Glie) \longrightarrow Y_{\mu+\epsilon}(\Glie) \otimes Y_{\nu+\eta}(\Glie). $$
     \item[(4)] It is coassociative if the middle coweight $\epsilon$ is antidominant:
     $$ (\Delta_{\mu,\epsilon} \otimes \mathrm{Id}) \circ \Delta_{\mu+\epsilon,\nu} = (\mathrm{Id} \otimes \Delta_{\epsilon,\nu}) \circ \Delta_{\mu,\epsilon+\nu}: Y_{\mu+\epsilon+\nu}(\Glie) \longrightarrow Y_{\mu}(\Glie)\otimes Y_{\epsilon}(\Glie) \otimes Y_{\nu}(\Glie). $$
 \end{itemize}
\end{theorem}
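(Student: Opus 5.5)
Since this theorem is quoted from \cite{coproduct}, I only sketch how I would prove it. I would treat uniqueness first, then existence for antidominant shifts, then extend to arbitrary shifts through the shift homomorphisms.

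\textbf{Uniqueness.} Fix $\mu,\nu$ and choose antidominant $\epsilon,\eta$ with $\mu+\epsilon$ and $\nu+\eta$ antidominant. Property~(3) expresses $\Delta_{\mu+\epsilon,\nu+\eta}\circ\iota_{\epsilon,\eta}^{\mu+\nu}$ through $\Delta_{\mu,\nu}$. Since $\iota_{\epsilon,0}^{\mu}\otimes\iota_{0,\eta}^{\nu}$ is injective, $\Delta_{\mu,\nu}$ is determined by the antidominant case. In the antidominant case I would use a small generating set of $Y_{\mu+\nu}(\Glie)$: the elements $x_{i,n}^{\pm}$ with $n$ in a bounded window, together with $\xi_{i,-m_i}$ and $\xi_{i,-m_i+1}$. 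The relation $[\xi_{i,p+1},x_{j,n}^{\pm}]-[\xi_{i,p},x_{j,n+1}^{\pm}]=\cdots$ recovers all higher $x^{\pm}_{i,n}$ from these. On this generating set, properties~(2) and~(3) pin down the values: the low modes are fixed by~(2), and the modes just above the shift are transported from $\Delta_{0,0}$ via~(3), in particular from the formula \eqref{equ: coproduct Yangian xi} for $\Delta(\xi_{i,1})$.

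\textbf{Existence for antidominant $\mu,\nu$.} Here I would write explicit candidate formulas on the generating set. These are $x^+_{i,m}\mapsto x^+_{i,m}\otimes1$ and $x^-_{i,n}\mapsto1\otimes x^-_{i,n}$ in the low ranges. The first mode above the shift is sent to the shifted analogue of the Drinfeld--Jimbo formula, for instance $x^+_{i,-m_i}\mapsto x^+_{i,-\langle\mu,\alpha_i\rangle}\otimes1+1\otimes x^+_{i,0}$ with appropriate index shifts. Finally $\xi_{i,-m_i+1}$ is sent to the shifted analogue of \eqref{equ: coproduct Yangian xi}, with the correction term $-\sum_{\beta\in\Phi}(\alpha_i,\beta)x^-_\beta\otimes x^+_\beta$. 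I would then verify the defining relations of $Y_{\mu+\nu}(\Glie)$ in the presentation by these generators.

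\textbf{Main obstacle.} This relation check is where I expect the real difficulty, because the shifted Yangian has no Hopf structure to lean on. My first attempt would be to avoid a direct check by realizing the candidate map as a limit or restriction of the ordinary coproduct. I would embed $Y_{\mu+\nu}(\Glie)$ into a localization or completion of $Y(\Glie)$ in which the negative-index $\xi$'s make sense, for example via GKLO-type representations, or by presenting $Y_\mu$ as a subquotient of $Y$ through the ``extended'' generators. The relations would then be inherited from $\Delta_{0,0}$, and the verification would reduce to checking that the candidate map lands in the correct subalgebra.

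\textbf{Arbitrary shifts and the remaining properties.} For general $\mu,\nu$ I define $\Delta_{\mu,\nu}$ by the identity in~(3). This requires showing that the image of $\Delta_{\mu+\epsilon,\nu+\eta}\circ\iota_{\epsilon,\eta}^{\mu+\nu}$ lies in the image of $\iota_{\epsilon,0}^{\mu}\otimes\iota_{0,\eta}^{\nu}$; it suffices to check this on the generators, using the explicit formulas. Independence of the choice of $(\epsilon,\eta)$ follows from uniqueness together with $\iota\circ\iota=\iota$. Coassociativity~(4) for antidominant middle coweight $\epsilon$ is checked on the same generating set: both sides agree on the low modes by~(2), and on the $\xi_{i,-m_i+1}$-type generator via coassociativity of $\Delta_{0,0}$ transported through~(3).
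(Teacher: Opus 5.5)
Your uniqueness argument and the passage from antidominant to arbitrary shifts through the injective maps $\iota^{\mu}_{\epsilon,0}\otimes\iota^{\nu}_{0,\eta}$ are the right architecture. This is the zigzag of \cite{coproduct}, which the paper only quotes and then reuses in Propositions~\ref{prop: coproduct spectral} and~\ref{prop: R- intertwining}. The genuine gap is the substantive step: existence of $\Delta_{\mu,\nu}$ for antidominant $\mu,\nu$ is left open, so the proposal is a roadmap rather than a proof.

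Writing down the values forced by (2) and (3) is easy. The work is to show they respect the relations mixing the new low modes $x^+_{i,m}$ ($m<-\langle\mu,\alpha_i\rangle$) and $x^-_{i,n}$ ($n<-\langle\nu,\alpha_i\rangle$) with $\iota^0_{\mu,\nu}(Y(\Glie))$. For instance, take $n\ge-\langle\nu,\alpha_i\rangle$ and the transported value $\Delta_{\mu,\nu}(x^-_{i,n})=(\iota^0_{\mu,0}\otimes\iota^0_{0,\nu})\Delta_{0,0}(x^-_{i,n+\langle\nu,\alpha_i\rangle})$. Its commutator with $x^+_{i,m}\otimes 1$ must equal $\Delta_{\mu,\nu}(\xi_{i,m+n})$. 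This has no counterpart in $Y(\Glie)$ and needs control of $\Delta_{0,0}(x^-_{i,k})$ well beyond its leading terms. Checking relations on your small generating set also presupposes a presentation of $Y_{\mu+\nu}(\Glie)$ by those generators, which is itself a Levendorskii-type theorem.

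The detour you suggest does not touch this difficulty. For antidominant $\mu+\nu$, every $\xi_{i,p}$ already lies in $\iota^0_{\mu,\nu}(Y(\Glie))$, so there are no negative-index $\xi$'s to accommodate; those arise for dominant shifts. What is new is exactly the low modes, with relations such as $[x^+_{i,0},x^-_{i,0}]=1$ in $Y_{-\varpi_i^{\vee}}(\Glie)$. You do not specify an embedding into a localization or completion of $Y(\Glie)$ to which $\Delta_{0,0}$ extends compatibly with (2). GKLO representations are maps out of $Y_\mu(\Glie)$, so they help only once you already have a coproduct on the difference-operator side and a faithfulness statement.

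The later steps are also thinner than stated, though repairable.

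\emph{Containment.} The inclusion $\Delta_{\mu+\epsilon,\nu+\eta}(\iota^{\mu+\nu}_{\epsilon,\eta}(Y_{\mu+\nu}(\Glie)))\subset\mathrm{im}(\iota^{\mu}_{\epsilon,0})\otimes\mathrm{im}(\iota^{\nu}_{0,\eta})$ cannot be read off your few explicit formulas. The images of generators of a non-antidominant $Y_{\mu+\nu}(\Glie)$ (e.g.\ the $\xi_{i,p}$ with $p<0$) are not among your generators. The inclusion does follow from the triangularity of $\Delta_{0,0}$ (Lemma~\ref{lem: Yangian coproduct estimation} for $\mu=\nu=0$, due to Knight), together with $Y^{\le}_{\mu+\epsilon}(\Glie)\subset\mathrm{im}(\iota^{\mu}_{\epsilon,0})$ and $Y^{\ge}_{\nu+\eta}(\Glie)\subset\mathrm{im}(\iota^{\nu}_{0,\eta})$.

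\emph{Coassociativity.} The two sides are pulled back along different embeddings $\iota^0_{\mu+\epsilon,\nu}\neq\iota^0_{\mu,\epsilon+\nu}$ of $Y(\Glie)$, so coassociativity of $\Delta_{0,0}$ does not simply transport. On $\xi_{i,-m_i+1}$ you must use $\Delta_{\mu,\epsilon}(x^-_\gamma)=\pi(x^-_\gamma)\otimes 1+1\otimes x^-_\gamma$, where $\pi$ is the algebra map killing the $x^-_{j,0}$ with $\langle\epsilon,\alpha_j\rangle<0$, and dually for $\Delta_{\epsilon,\nu}(x^+_\gamma)$. You then have to check by hand that the leftover terms $\sum_\gamma(\alpha_i,\gamma)\pi(x^-_\gamma)\otimes1\otimes x^+_\gamma$ and $\sum_\gamma(\alpha_i,\gamma)x^-_\gamma\otimes1\otimes\pi(x^+_\gamma)$ coincide. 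They do, because root vectors are primitive.

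\emph{Index slip.} The first non-low mode is $x^+_{i,-\langle\mu,\alpha_i\rangle}$, not $x^+_{i,-m_i}$, since $\iota^0_{\mu,\nu}$ shifts $x^+$ by $\mu$ only.
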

We call $\Delta_{\mu,\nu}$ a shifted coproduct, although there is no Hopf algebra structure involved. Given a $Y_{\mu}(\Glie)$-module $M$ and a $Y_{\nu}(\Glie)$-module $N$, we can now equip the tensor product $M\otimes N$ with a $Y_{\mu+\nu}(\Glie)$-module structure via $\Delta_{\mu,\nu}$.

The following coproduct estimation for the ordinary Yangian is due to Knight \cite{Kn}. Its proof works for shifted Yangians.
\begin{lem}\cite[Lemma 2.5]{HZ} \label{lem: Yangian coproduct estimation}
Let $\mu$ and $\nu$ be two coweights. For $i \in I,\ n \in \BN$ and $p \in \BZ$, we have the following coproduct estimation:
\begin{gather*}
\Delta_{\mu,\nu}(x_{i,n}^+) \equiv  x_{i,n}^+ \otimes 1 + \sum_{m\in \BN} \xi_{i,n-m-1} \otimes x_{i,m}^+  \ \mathrm{mod}.\sum_{0\neq \beta \in \BQ_+} Y_{\mu}^{\leq}(\Glie)_{-\beta}  \otimes Y_{\nu}^{\geq}(\Glie)_{\beta+\alpha_i},  \\
\Delta_{\mu,\nu}(x_{i,n}^-) \equiv 1 \otimes x_{i,n}^- + \sum_{m\in \BN} x_{i,m}^- \otimes \xi_{i,n-m-1}  \ \mathrm{mod}. \sum_{0\neq\beta \in \BQ_+}  Y_{\mu}^{\leq}(\Glie)_{-\beta-\alpha_i} \otimes  Y_{\nu}^{\geq}(\Glie)_{\beta}, \\
\Delta_{\mu,\nu}(\xi_{i,p}) \equiv \sum_{t\in \BZ} \xi_{i,t} \otimes \xi_{i,p-t-1} \ \mathrm{mod}. \sum_{0\neq \beta \in \BQ_+} Y_{\mu}^{\leq}(\Glie)_{-\beta} \otimes Y_{\nu}^{\geq}(\Glie)_{\beta}, \\
\Delta_{\mu,\nu}(\xi_{i,-\langle\mu+\nu,\alpha_i\rangle}) = \xi_{i,-\langle\mu,\alpha_i\rangle} \otimes 1 + 1 \otimes \xi_{i,-\langle\nu,\alpha_i\rangle}.
\end{gather*}
In particular, we have the co-ideal subalgebra property:
$$ \Delta_{\mu,\nu}(Y_{\mu+\nu}^{\geq}(\Glie)) \subset Y_{\mu}(\Glie) \otimes Y_{\nu}^{\geq}(\Glie),\quad  \Delta_{\mu,\nu}(Y_{\mu+\nu}^{\leq}(\Glie)) \subset Y_{\mu}^{\leq}(\Glie) \otimes Y_{\nu}(\Glie). $$
\end{lem}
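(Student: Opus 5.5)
The plan is to reduce everything to the ordinary Yangian, where the estimation is Knight's theorem. The two tools are the compatibility of shifted coproducts with shift homomorphisms (Theorem~\ref{thm: coproduct shifted Yangian}(3)) and the injectivity of the shift homomorphisms.

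\textbf{Preliminary.} For $\gamma\in\BQ$, let $\mathcal{J}^{\mu,\nu}_{\gamma}:=\sum_{0\neq\beta\in\BQ_+} Y_{\mu}^{\leq}(\Glie)_{-\beta}\otimes Y_{\nu}^{\geq}(\Glie)_{\beta+\gamma}$ be the error space. I will use two facts about these spaces.
\begin{itemize}
\item[(a)] Shift homomorphisms preserve the weight grading and send $Y^{\leq}$, $Y^{\geq}$ into $Y^{\leq}$, $Y^{\geq}$. Hence $\iota^{\mu}_{\epsilon,0}\otimes\iota^{\nu}_{0,\eta}$ maps $\mathcal{J}^{\mu,\nu}_{\gamma}$ into $\mathcal{J}^{\mu+\epsilon,\nu+\eta}_{\gamma}$.
\item[(b)] Products of ``leading term plus error'' expressions stay of that form. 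Concretely, if $A\equiv A_0 \bmod \mathcal{J}_{\gamma}$ and $B\equiv B_0 \bmod \mathcal{J}_{\gamma'}$, where $A_0$ and $B_0$ are sums of terms in $Y^{\leq}_{\leq 0}\otimes Y^{\geq}$ of the displayed shape, then $AB$ and $[A,B]$ are congruent to the corresponding product or commutator of leading terms modulo $\mathcal{J}_{\gamma+\gamma'}$. This is because weights add and the $\beta$'s add in $\BQ_+$.
\end{itemize}

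\textbf{Step 1 (zero shift).} For $\mu=\nu=0$ this is Knight's estimation. If I reprove it, I argue as follows.
\begin{itemize}
\item The elements $x_{i,0}^{\pm}$ and $\xi_{i,0}$ are handled by their explicit coproducts. The formula \eqref{equ: coproduct Yangian xi} gives $\Delta(\xi_{i,1})$ with error term $-\sum_{\beta}(\alpha_i,\beta)x_\beta^-\otimes x_\beta^+$, which lies in $\mathcal{J}_0$.
\item The relation $x_{i,n+1}^{\pm}=\pm\tfrac{1}{2d_i}[\xi_{i,1}-\tfrac{1}{2}\xi_{i,0}^2\cdots, x_{i,n}^{\pm}]$ (the standard $\tilde\xi_{i,1}$ trick) combined with (b) gives the $x^{\pm}$ estimates by induction on $n$.
\item Then $\xi_{i,p}=[x_{i,p}^+,x_{i,0}^-]$ gives the $\xi$ estimates.
\end{itemize}
One must track that the leading terms close up correctly, which reduces to the $\mathfrak{sl}_2$ loop computation; the residual cross terms land in the error space.

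\textbf{Step 2 (both coweights antidominant).} Apply Theorem~\ref{thm: coproduct shifted Yangian}(3) with $\mu=\nu=0$ and antidominant shifts $\epsilon=\mu$, $\eta=\nu$:
$$(\iota^0_{\mu,0}\otimes\iota^0_{0,\nu})\circ\Delta_{0,0}=\Delta_{\mu,\nu}\circ\iota^0_{\mu,\nu}.$$
\begin{itemize}
\item Applying $\iota\otimes\iota$ to Knight's estimate for $x_{i,n}^+$ and using (a) yields the claimed estimate for $x^+_{i,n-\langle\mu,\alpha_i\rangle}$. Here the term $\xi_{i,n-m-1}\otimes x^+_{i,m}$ becomes $\xi_{i,n-m-1-\langle\mu,\alpha_i\rangle}\otimes x^+_{i,m}$, which is exactly the required form.
\item The remaining generators $x^+_{i,k}$ with $0\le k<-\langle\mu,\alpha_i\rangle$ satisfy $\Delta(x^+_{i,k})=x^+_{i,k}\otimes1$ by Theorem~\ref{thm: coproduct shifted Yangian}(2). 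This matches the formula, since $\xi_{i,k-m-1}=0$ for those indices by the truncation relations.
\item The $x^-$ generators are handled symmetrically.
\item For $\xi_{i,p}$ I use $\xi_{i,p}=[x^+_{i,a},x^-_{i,b}]$ with $a+b=p$ and apply (b). This covers all $p\ge -\langle\mu+\nu,\alpha_i\rangle-1$; the remaining $p$ are trivial.
\item The exact formula for $\xi_{i,-\langle\mu+\nu,\alpha_i\rangle}$ is the image of the primitive $\Delta(\xi_{i,0})$.
\end{itemize}

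\textbf{Step 3 (general coweights).} For arbitrary $\mu,\nu$, choose antidominant $\epsilon,\eta$ such that $\mu+\epsilon$ and $\nu+\eta$ are antidominant, and use
$$(\iota_{\epsilon,0}^{\mu}\otimes\iota_{0,\eta}^{\nu})\circ\Delta_{\mu,\nu}=\Delta_{\mu+\epsilon,\nu+\eta}\circ\iota^{\mu+\nu}_{\epsilon,\eta}.$$
\begin{itemize}
\item The right-hand side is controlled by Step 2.
\item The candidate right-hand side of the lemma maps under $\iota\otimes\iota$ to the Step 2 expression, again using (a).
\item It remains to pull the congruence back along the injective map $\iota\otimes\iota$. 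This requires the error spaces to satisfy $(\iota\otimes\iota)^{-1}(\mathcal{J}^{\mu+\epsilon,\nu+\eta}_\gamma)=\mathcal{J}^{\mu,\nu}_\gamma$.
\end{itemize}

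I expect this pull-back to be the main obstacle. The shift maps are injective, but one needs a PBW-type triangular decomposition $Y_\mu\cong Y^-_\mu\otimes Y^0_\mu\otimes Y^+_\mu$ compatible with $\iota$ to see that the preimage of $Y^{\leq}$ is exactly $Y^{\leq}$. I would first try to cite the triangular decomposition of shifted Yangians from \cite{coproduct}. Alternatively, I would avoid pull-backs altogether by redoing Step 2 directly for general $\mu,\nu$ from the defining construction of $\Delta_{\mu,\nu}$ in \cite{coproduct}.

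Finally, the co-ideal property follows immediately from the three estimates: every term in them lies in $Y_\mu\otimes Y_\nu^{\geq}$, respectively $Y_\mu^{\leq}\otimes Y_\nu$.
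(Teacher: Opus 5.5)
\textbf{Comparison with the paper.} Your route differs from the paper's. The paper gives no transfer argument; it simply asserts, via \cite[Lemma 2.5]{HZ}, that Knight's inductive computation can be rerun for $\Delta_{\mu,\nu}$ itself. You instead transport the unshifted estimate to antidominant shifts along Theorem~\ref{thm: coproduct shifted Yangian}(3), then pull it back to arbitrary $\mu,\nu$ along injective shift maps. This is the same zigzag mechanism the paper uses in Propositions~\ref{prop: coproduct spectral} and~\ref{prop: R- intertwining}, and it needs only structural properties of the shifted coproducts, not explicit formulas for them.

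\textbf{What is correct.} The following parts are all fine:
\begin{itemize}
\item fact (a);
\item the $x^{\pm}$ part of Step 2, including the use of Theorem~\ref{thm: coproduct shifted Yangian}(2) and the truncation $\xi_{i,p}=0$ for $p<s_i-1$, where $s_i=-\langle\mu,\alpha_i\rangle$;
\item the exact formula for $\xi_{i,-\langle\mu+\nu,\alpha_i\rangle}$.
\end{itemize}
The pull-back in Step 3 is not a real obstacle. The PBW/triangular decomposition of \cite{coproduct}, which also underlies injectivity of the shift maps, gives a weight-graded splitting $Y_\mu(\Glie)=Y^{\leq}_\mu(\Glie)\oplus Y^{\leq}_\mu(\Glie)\,Y^{+}_{\mu}(\Glie)_{\neq 0}$. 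Each summand is sent by the shift homomorphism into its counterpart, so injectivity gives $\iota^{-1}(Y^{\leq})=Y^{\leq}$. The same holds for $Y^{\geq}$, and the resulting four-term splitting of the tensor square yields exactly the required preimage of the error spaces.

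\textbf{The gap: fact (b) and the $\xi$ step.} The hypothesis of (b) requires the leading terms to lie in $Y^{\leq}\otimes Y^{\geq}$. The leading terms $x^+_{i,a}\otimes 1$ and $1\otimes x^-_{i,b}$ do not, and the conclusion genuinely fails for them.
\begin{itemize}
\item We have $[x^+_{i,a}\otimes 1,\,y\otimes z]=[x^+_{i,a},y]\otimes z$.
\item Already in $Y(\Glie)$, take $y=(x^-_{i,0})^2\xi_{i,0}\in Y^{\leq}_{-2\alpha_i}$, which is allowed in the error of $\Delta(x^-_{i,b})$ with $\beta=\alpha_i$.
\item Then $[x^+_{i,a},y]\equiv -2d_i\,(x^-_{i,0})^2x^+_{i,a}$ modulo $Y^{\leq}$, and this element is not in $Y^{\leq}$ by PBW.
\item Symmetrically, $y\otimes[z,x^-_{i,b}]$ leaves $Y^{\leq}\otimes Y^{\geq}$ whenever $z$ has a Drinfeld--Cartan factor.
\end{itemize}
So $[\Delta_{\mu,\nu}(x^+_{i,a}),\Delta_{\mu,\nu}(x^-_{i,b})]$ cannot be controlled from the $x^{\pm}$ estimates alone.

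Your sketched reproof of Knight in Step~1 has the same hole. It closes only with finer bookkeeping:
\begin{itemize}
\item the error of $\Delta(\tilde\xi_{i,1})$ lies in $Y^-\otimes Y^+$;
\item hence, by induction, the error of $\Delta(x^+_{i,n})$ has second tensor factor in $Y^+$, not merely $Y^{\geq}$;
\item one then uses $\xi_{i,p}=[x^+_{i,p},x^-_{i,0}]$ with $\Delta(x^-_{i,0})$ exactly primitive.
\end{itemize}

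\textbf{The fix.} In Step 2 you do not need the commutator at all. Let $\mu,\nu$ be antidominant and set $t_i=-\langle\nu,\alpha_i\rangle$. Every nonzero $\xi_{i,q}\in Y_{\mu+\nu}(\Glie)$ has $q\geq s_i+t_i-1$ and equals $\iota^0_{\mu,\nu}(\xi_{i,q-s_i-t_i})$. So Knight's $\xi$-estimate transfers through Theorem~\ref{thm: coproduct shifted Yangian}(3) and (a) exactly like the $x^{\pm}$ estimates, with $p=q-s_i-t_i$:
$$\sum_t \xi_{i,t+s_i}\otimes \xi_{i,p-t-1+t_i}=\sum_t\xi_{i,t}\otimes\xi_{i,q-t-1}.$$
With this change, Steps 2 and 3 give a complete proof.
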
 
As a consequence, if $M$ is a $Y_{\mu}(\Glie)$-module and $N$ is a $Y_{\nu}^{\geq}(\Glie)$-module, then $M\otimes N$ is naturally a $Y_{\mu+\nu}^{\geq}(\Glie)$-module. Similar statement holds for $Y_{\mu}^{\leq}(\Glie)$-modules.

Another important application of the coproduct estimation is the exact projection formulas \eqref{equ: trivial times}--\eqref{equ: times  trivial} below. 

\begin{defi}\cite[Definition 3.1]{Z2}   \label{defi: trivial module}
    For $\mu$ a coweight, let $\BY_{\mu}(\Glie)$ denote the quotient of $Y_{\mu}(\Glie)$ by the two-sided ideal generated by $x_{i,m}^{\pm}$ for $i \in I$ and $m \in \BN$. Denote the quotient map by $\pi_{\mu}: Y_{\mu}(\Glie) \longrightarrow \BY_{\mu}(\Glie)$. Call a $Y_{\mu}(\Glie)$-module {\it trivial} if its structural map factorizes through the quotient map $\pi_{\mu}$, namely, the actions of the $x_{i,m}^{\pm}$ are identically zero. Similarly, call a $Y_{\mu}^{\geq}(\Glie)$-module {\it trivial} if the actions of the $x_{i,m}^+$ are identically zero. Call a $Y_{\mu}^{\leq}(\Glie)$-module {\it trivial} if the actions of the $x_{i,m}^-$ are identically zero.
\end{defi}

By abuse of language, let $\xi_i(u)$ denote the image of $\xi_i(u)$ by $\pi_{\mu}$. Consider the algebra homomorphism $(\pi_{\mu} \otimes \mathrm{Id}) \circ \Delta_{\mu,\nu}: Y_{\mu+\nu}(\Glie) \longrightarrow \BY_{\mu}(\Glie) \otimes Y_{\nu}(\Glie)$, the images by which of the generating series of $Y_{\mu+\nu}(\Glie)$ are given by \cite[Eq.(3.6)]{Z2}
\begin{equation}  \label{equ: trivial times}
    x_i^+(u) \mapsto  \langle \xi_i(u) \otimes x_i^+(u)\rangle_+, \quad x_i^-(u) \mapsto 1 \otimes x_i^-(u),\quad \xi_i(u) \mapsto \xi_i(u) \otimes \xi_i(u).
\end{equation}
Here, for a formal Laurent series $f = \sum_{p\in \BZ} f_p u^{-p-1}$ with coefficients in a vector space, the symbol $\langle f\rangle_+$ means the power series $\sum_{n\in \BN} f_n u^{-n-1}$; sometimes, we write $\langle f\rangle_+^u$ to emphasize the variable $u$. Similarly, the algebra homomorphism $(\mathrm{Id} \otimes \pi_{\nu}) \circ \Delta_{\mu,\nu}$ from $Y_{\mu+\nu}(\Glie)$ to $Y_{\mu}(\Glie) \otimes \BY_{\nu}(\Glie)$ is described in terms of the generating series as:
\begin{equation}  \label{equ: times  trivial}
    x_i^+(u) \mapsto x_i^+(u) \otimes 1, \quad x_i^-(u) \mapsto \langle x_i^-(u) \otimes \xi_i(u) \rangle_+,\quad \xi_i(u) \mapsto \xi_i(u) \otimes \xi_i(u).
\end{equation}
\begin{rem}  \label{rem: coideal}
    The formulas \eqref{equ: trivial times}--\eqref{equ: times  trivial} can be restricted to coideal subalgebras. Let $\varphi: Y_{\mu}(\Glie) \longrightarrow \mathrm{End}(M)$ and $\psi: Y_{\nu}^{\geq}(\Glie) \longrightarrow \mathrm{End}(N)$ be two representations, one of which is trivial. Then in the $Y_{\mu+\nu}^{\geq}(\Glie)$-module $M\otimes N$, each Drinfeld--Cartan series $\xi_i(u)$ acts simply as $\varphi(\xi_i(u)) \otimes \psi(\xi_i(u))$. Similar statements hold for $Y_{\mu+\nu}^{\leq}(\Glie)$. 
\end{rem}

\subsection{Co-associativity and Theta series}   \label{ss: Theta}
Given three coweights $\mu, \epsilon$ and $\nu$, consider the two sides of the equation in Theorem \ref{thm: coproduct shifted Yangian}(4):
$$ \Delta_{(\mu\epsilon)\nu} := (\Delta_{\mu,\epsilon} \otimes \mathrm{Id}) \circ \Delta_{\mu+\epsilon,\nu}\quad \textrm{and}\quad \Delta_{\mu(\epsilon\nu)} := (\mathrm{Id}\otimes \Delta_{\epsilon,\nu}) \circ \Delta_{\mu,\epsilon+\nu} $$
as algebra homomorphisms from $Y_{\mu+\epsilon+\nu}(\Glie)$ to $Y_{\mu}(\Glie) \otimes Y_{\epsilon}(\Glie) \otimes Y_{\nu}(\Glie)$. Let $M,\, K$ and $N$ be modules over the corresponding three shifted Yangians. On the same triple tensor product $M \otimes K \otimes N$, we have two module structures over $Y_{\mu+\epsilon+\nu}(\Glie)$ induced by $\Delta_{(\mu\epsilon)\nu}$ and $\Delta_{\mu(\epsilon\nu)}$, and denoted by $(M\otimes K) \otimes N$ and $M \otimes (K \otimes N)$, respectively. In this subsection, we recall the results of \cite{Z2} on comparisons of these two modules.

\begin{theorem}\cite[Theorem 3.3]{Z2}   \label{thm: asso trivial}
    Given three coweights $\mu, \epsilon$ and $\nu$, we have the following two identities of algebra homomorphisms from $Y_{\mu+\epsilon+\nu}(\Glie)$ to $Y_{\mu}(\Glie)\otimes Y_{\epsilon}(\Glie) \otimes \BY_{\nu}(\Glie)$ and to $\BY_{\mu}(\Glie) \otimes Y_{\epsilon}(\Glie) \otimes Y_{\nu}(\Glie)$, respectively: 
        \begin{gather*}
            (\pi_{\mu} \otimes \mathrm{Id} \otimes \mathrm{Id}) \circ \Delta_{(\mu\epsilon)\nu} = (\pi_{\mu} \otimes \mathrm{Id} \otimes \mathrm{Id}) \circ \Delta_{\mu(\epsilon\nu)}; \\
            (\mathrm{Id} \otimes \mathrm{Id} \otimes \pi_{\nu}) \circ \Delta_{(\mu\epsilon)\nu} = (\mathrm{Id} \otimes \mathrm{Id} \otimes \pi_{\nu}) \circ \Delta_{\mu(\epsilon\nu)}.
        \end{gather*}
\end{theorem}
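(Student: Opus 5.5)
Since both sides are algebra homomorphisms, it suffices to compare them on the Drinfeld generators $x_{i,n}^{\pm}$ and $\xi_{i,p}$ of $Y_{\mu+\epsilon+\nu}(\Glie)$. I treat the first identity; the second is symmetric, with the roles of $\pm$ and of the left/right factors exchanged, using \eqref{equ: times  trivial} in place of \eqref{equ: trivial times}. The idea is to compute both compositions explicitly in generating series using the exact projection formulas \eqref{equ: trivial times}--\eqref{equ: times  trivial}.

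On the left-hand side, the map $(\pi_{\mu}\otimes \mathrm{Id}\otimes\mathrm{Id})\circ\Delta_{(\mu\epsilon)\nu}$ equals $\big(((\pi_\mu\otimes \mathrm{Id})\circ\Delta_{\mu,\epsilon})\otimes \mathrm{Id}\big)\circ \Delta_{\mu+\epsilon,\nu}$. The inner map $(\pi_\mu\otimes \mathrm{Id})\circ\Delta_{\mu,\epsilon}$ is known exactly by \eqref{equ: trivial times}. The outer $\Delta_{\mu+\epsilon,\nu}$, however, is only known up to Knight's estimate, Lemma \ref{lem: Yangian coproduct estimation}.

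On the right-hand side, the map is $\big((\pi_\mu\otimes\mathrm{Id})\circ \Delta_{\mu,\epsilon+\nu}\big)$ followed by $\mathrm{Id}\otimes\Delta_{\epsilon,\nu}$. By \eqref{equ: trivial times}, the generators are sent as follows:
\begin{gather*}
x_i^-(u)\mapsto 1\otimes \Delta_{\epsilon,\nu}(x_i^-(u)), \qquad \xi_i(u)\mapsto \xi_i(u)\otimes \Delta_{\epsilon,\nu}(\xi_i(u)), \\
x_i^+(u)\mapsto \langle \xi_i(u)\otimes\Delta_{\epsilon,\nu}(x_i^+(u))\rangle_+ .
\end{gather*}

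The cleanest route avoids the unknown tails. I factor through the smaller algebra $\BY_\mu(\Glie)$ and view $\BY_\mu(\Glie)\otimes -$ as a "trivial module times anything". The key observation is that both sides, composed with $\pi_\mu$, are algebra maps $Y_{\mu+\epsilon+\nu}\to \BY_\mu\otimes Y_\epsilon\otimes Y_\nu$. They agree on all $x_{i,n}^-$ immediately, since both give $1\otimes\Delta_{\epsilon,\nu}(x_{i,n}^-)$. For the left side this uses that $\Delta_{\mu+\epsilon,\nu}(x_i^-(u))$ has first tensor factor in $Y^{\leq}_{\mu+\epsilon}$, and that $(\pi_\mu\otimes\mathrm{Id})\Delta_{\mu,\epsilon}$ restricted to $Y^{\le}_{\mu+\epsilon}$ is $1\otimes(\cdot)$ on $x^-$ and $\xi\otimes\xi$ on $\xi$. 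This needs care with tails: the terms in $Y^{\leq}_{-\beta-\alpha_i}\otimes Y^{\geq}_\beta$ map to $1\otimes(\text{element of } Y_\epsilon^{\leq})\otimes(\cdots)$.

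This suggests a structural argument instead of generator chasing. First, show that on the subalgebra $Y^{\leq}_{\mu+\epsilon+\nu}$ both sides agree, because $(\pi_\mu\otimes\mathrm{Id})\Delta_{\mu,\epsilon}$ restricted to $Y^{\le}_{\mu+\epsilon}$ equals $(\pi_\mu\otimes \mathrm{Id})\circ\Delta_{\mu,\epsilon}$ given by the simple "diagonal" formula $\xi\otimes\xi$, $1\otimes x^-$. This formula is multiplicative and in effect coassociative with $\Delta$ on Cartan and lowering parts. Second, handle $x_{i,n}^+$. Knowing both sides on $x^-$ and $\xi$, use the relation $[x_{i,m}^+,x_{j,n}^-]=\delta_{ij}\xi_{i,m+n}$ together with the weight grading: the difference $D$ of the two images of $x_{i,n}^+$ commutes with all images of $x_{j,m}^-$ and has weight $\alpha_i$.

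The main obstacle is this second step: concluding that $D=0$. For that I would use a nondegeneracy argument, namely faithfulness on tensor products of suitable modules (e.g. highest-weight/Verma-type modules in category $\mathcal{O}$ tensored with trivial ones). Alternatively, I would invoke the shift compatibility, Theorem \ref{thm: coproduct shifted Yangian}(3), to reduce to large antidominant shifts. There, Theorem \ref{thm: coproduct shifted Yangian}(2) gives $\Delta(x_{i,m}^+)=x_{i,m}^+\otimes 1$ exactly for small $m$, and these finitely many generators plus $\xi$'s generate after shifting. If that route is blocked, I would first try the direct generating-series comparison on the $\xi_i(u)$, using the third and fourth formulas of Lemma \ref{lem: Yangian coproduct estimation} and the fact that $\pi_\mu$ kills the tails.
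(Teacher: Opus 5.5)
There is a genuine gap, and it begins at the step you call immediate. Write $\Gamma^{\mu}_{\lambda}:=(\pi_\mu\otimes\mathrm{Id})\circ\Delta_{\mu,\lambda}\colon Y_{\mu+\lambda}(\Glie)\to\BY_\mu(\Glie)\otimes Y_\lambda(\Glie)$. This map is known exactly by \eqref{equ: trivial times}. On $x^-_{i,n}$ the left-hand side is $(\Gamma^\mu_\epsilon\otimes\mathrm{Id})\Delta_{\mu+\epsilon,\nu}(x^-_{i,n})$, but Lemma \ref{lem: Yangian coproduct estimation} only controls leading terms.

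The tail of $\Delta_{\mu+\epsilon,\nu}(x^-_{i,n})$ is only known to lie in $\sum_{\beta\neq 0}Y^{\le}_{\mu+\epsilon}(\Glie)_{-\beta-\alpha_i}\otimes Y^{\ge}_\nu(\Glie)_\beta$. Its first factors may involve Drinfeld--Cartan elements, which $\Gamma^\mu_\epsilon$ sends to $\xi\otimes\xi$ rather than to $1\otimes(\cdot)$. The result must then match the tail of $\Delta_{\epsilon,\nu}(x^-_{i,n})$, which is computed for a different shift of the first factor. Nothing you invoke relates these two tails. So ``multiplicative and in effect coassociative on Cartan and lowering parts'' is just the statement itself restricted to $Y^{\le}_{\mu+\epsilon+\nu}(\Glie)$.

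The second step ($D=0$) is, as you admit, unproved, and no centralizer lemma is available. The lowest-weight argument that would work in $U(\Glie)^{\otimes 2}$ breaks down here: in $Y_\lambda(\Glie)$ one has $\xi_{i,0}=0$ as soon as $\langle\lambda,\alpha_i\rangle\le -2$, so $x^\pm_{i,0}$ no longer generate an $\mathfrak{sl}_2$.

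Your fallback of shifting to antidominant coweights also misfires. $\BY_\mu(\Glie)=0$ unless $\mu$ is dominant, so the trivial factor cannot be shifted. The real obstruction lies in the middle factor: Theorem \ref{thm: coproduct shifted Yangian}(3) requires $\iota_{0,\eta}$ there for $\Delta_{\mu,\epsilon}$ but $\iota_{\eta,0}$ for $\Delta_{\epsilon,\nu}$.

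The missing idea is Theorem \ref{thm: coproduct shifted Yangian}(4), which you never use, together with the observation that trivializing the first factor removes exactly this obstruction. Assume $\mu$ is dominant (otherwise both sides are $0$). Then $\pi_\mu(\xi_i(u))$ is a polynomial in $u$, and $\langle P\langle Qf\rangle_+\rangle_+=\langle PQf\rangle_+$ holds for polynomials $P,Q$. Combined with \eqref{equ: trivial times}, a check on $x_i^\pm(u)$ and $\xi_i(u)$ gives, for every coweight $\lambda$ and antidominant $\eta$,
\begin{equation*}
(\mathrm{Id}\otimes\iota^{\lambda}_{\eta,0})\circ\Gamma^\mu_\lambda=\Gamma^\mu_{\lambda+\eta}\circ\iota^{\mu+\lambda}_{\eta,0}.
\end{equation*}
Now apply the injective map $\mathrm{Id}\otimes\iota^\epsilon_{\eta,0}\otimes\mathrm{Id}$ to both sides of the first identity. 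Use the display above (with $\lambda=\epsilon$ and $\lambda=\epsilon+\nu$) and Theorem \ref{thm: coproduct shifted Yangian}(3) for $\Delta_{\mu+\epsilon,\nu}$ and $\Delta_{\epsilon,\nu}$, with $\iota_{\eta,0}$ on the first factor only. The two sides become
$(\pi_\mu\otimes\mathrm{Id}\otimes\mathrm{Id})\circ(\Delta_{\mu,\epsilon+\eta}\otimes\mathrm{Id})\circ\Delta_{\mu+\epsilon+\eta,\nu}\circ\iota^{\mu+\epsilon+\nu}_{\eta,0}$ and $(\pi_\mu\otimes\mathrm{Id}\otimes\mathrm{Id})\circ(\mathrm{Id}\otimes\Delta_{\epsilon+\eta,\nu})\circ\Delta_{\mu,\epsilon+\eta+\nu}\circ\iota^{\mu+\epsilon+\nu}_{\eta,0}$.
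Choose $\eta$ with $\epsilon+\eta$ antidominant. These two maps then agree by coassociativity with antidominant middle coweight, and injectivity gives the claim. The second identity is symmetric, using $\iota_{0,\eta}$ on the middle factor and \eqref{equ: times  trivial}. No control of tails and no nondegeneracy argument is needed.
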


 As a direct consequence, if $M$ or $N$ is a trivial module, then the identity map is a $Y_{\mu+\epsilon+\nu}(\Glie)$-module isomorphism from $(M\otimes K) \otimes N$ to $M\otimes (K\otimes N)$.

Assume from now on that the middle module $K$ is one-dimensional with structural map $\rho: Y_{\epsilon}(\Glie) \longrightarrow \BC$. The triple tensor product modules $(M\otimes K) \otimes N$ and $M\otimes (K\otimes N)$ are pullbacks of the $Y_{\mu}(\Glie)\otimes Y_{\nu}(\Glie)$-module $M\otimes N$ along the algebra homomorphisms 
$$(\mathrm{Id} \otimes \rho \otimes \mathrm{Id}) \circ \Delta_{(\mu\epsilon)\nu} \ \textrm{ and }\ (\mathrm{Id} \otimes \rho \otimes \mathrm{Id}) \circ \Delta_{\mu(\epsilon\nu)}: Y_{\mu+\epsilon+\nu}(\Glie) \longrightarrow Y_{\mu}(\Glie) \otimes Y_{\nu}(\Glie). $$
We deform the two maps by a formal spectral parameter $z$.
Define the algebra homomorphism $\rho_z: Y_{\epsilon}(\Glie) \longrightarrow \BC[z]$ to be the composition of $\tau_z: Y_{\epsilon}(\Glie) \longrightarrow Y_{\epsilon}(\Glie)[z]$ with $\rho \otimes \mathrm{Id}_{\BC[z]}: Y_{\epsilon}(\Glie)[z] \longrightarrow \BC[z]$. Consider the following two algebra homomorphisms 
$$ (\mathrm{Id} \otimes \rho_z \otimes \mathrm{Id}) \circ \Delta_{(\mu\epsilon)\nu} \ \textrm{ and }\ (\mathrm{Id} \otimes \rho_z \otimes \mathrm{Id}) \circ  \Delta_{\mu(\epsilon\nu)}: Y_{\mu+\epsilon+\nu}(\Glie) \longrightarrow Y_{\mu}(\Glie) \otimes Y_{\nu}(\Glie)[z]. $$
Their evaluations at $z=0$ recover the previous two algebra homomorphisms.

\begin{defi}\cite[Definition 5.4]{Z2} \label{defi: completion}
    Given two $\BQ$-graded vector spaces $M$ and $N$, we define the $z$-completed tensor product $M\otimes_z N$ to be the following vector space
    $$ M \otimes_z N :=  \sum_{\alpha,\beta \in \BQ} \prod_{\gamma \in \BQ_+} (M_{\alpha-\gamma} \otimes N_{\beta+\gamma})((z^{-1})).$$
    Here, for $\alpha \in \BQ$, we let $M_{\alpha}$ denote the $\alpha$-component of $M$.
\end{defi}
We view $M \otimes_z N$ as a completion of the space $M\otimes N[z]$ of polynomials.
If $M$ and $N$ are $\BQ$-graded modules over $\BQ$-graded algebras $\mathscr{A}$ and $\mathscr{B}$ respectively, then $\mathscr{A}\otimes_z \mathscr{B}$ is naturally an algebra that admits a representation on $M\otimes_z N$. 
 
\begin{theorem}\cite[Theorem 5.7, Proposition 5.11]{Z2}   \label{thm: Theta}
    Let $\mu, \epsilon$ and $\nu$ be three coweights and $K$ be a one-dimensional $Y_{\epsilon}(\Glie)$-module with structural map $\rho: Y_{\epsilon}(\Glie) \longrightarrow \BC$. 
    \begin{itemize}
        \item[(1)] In the $z$-completed tensor product $Y_{\mu}(\Glie) \otimes_z Y_{\nu}(\Glie)$ there is a unique element $\Theta_K^{\mu,\nu}(z)$ which is unitriangular of the form 
        \begin{gather*}
            \Theta_K^{\mu,\nu}(z) = \sum_{\beta \in \BQ_+} \Theta_{K,\beta}^{\mu,\nu}(z) \quad \textrm{with} \\
            \Theta_{K,\beta}^{\mu,\nu}(z) \in (Y_{\mu}(\Glie)_{-\beta} \otimes Y_{\nu}(\Glie)_{\beta}) ((z^{-1})) \quad \textrm{ for $\beta \in \BQ_+$ and } \quad \Theta_{K,0}^{\mu,\nu}(z) = 1 \otimes 1
        \end{gather*}
       and which satisfies the intertwining equations for all $x \in Y_{\mu+\epsilon+\nu}(\Glie)$:
      $$ \Theta_K^{\mu,\nu}(z) \times (\mathrm{Id} \otimes \rho_z \otimes \mathrm{Id}) \circ \Delta_{(\mu\epsilon)\nu}(x) = (\mathrm{Id} \otimes \rho_z \otimes \mathrm{Id}) \circ \Delta_{\mu(\epsilon\nu)}(x) \times \Theta_K^{\mu,\nu}(z).  $$
      \item[(2)] For $\beta \in \BQ_+$, the Laurent series $\Theta_{K,\beta}^{\mu,\nu}(z)$ is a polynomial in $z$ with coefficients in $Y_{\mu}^-(\Glie)_{-\beta} \otimes Y_{\nu}^+(\Glie)_{\beta}$ and of degree bounded above by $\langle \epsilon, \beta \rangle$. Furthermore, under the natural identifications $Y_{\mu}^{\pm}(\Glie) = Y_{0}^{\pm}(\Glie)$, we have $\Theta_{K,\beta}^{\mu,\nu}(z) = \Theta_{K,\beta}^{0,0}(z)$.  
    \end{itemize}
\end{theorem}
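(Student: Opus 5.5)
The plan is to build $\Theta_K^{\mu,\nu}(z)$ weight component by weight component, with the intertwining equation for the Drinfeld--Cartan generators as the engine, and then to get polynomiality by reducing to a shift-independent, rank-one-controlled situation.

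\emph{Uniqueness and existence (Part (1)).} Write $\Delta^{(1)}_z := (\mathrm{Id}\otimes\rho_z\otimes\mathrm{Id})\circ\Delta_{(\mu\epsilon)\nu}$ and $\Delta^{(2)}_z := (\mathrm{Id}\otimes\rho_z\otimes\mathrm{Id})\circ\Delta_{\mu(\epsilon\nu)}$. By Lemma \ref{lem: Yangian coproduct estimation}, applied twice, both maps send $\xi_{i,p}$ to the same leading Cartan term modulo $\sum_{\beta\neq 0} Y_\mu^{\leq}(\Glie)_{-\beta}\otimes Y_\nu^{\geq}(\Glie)_{\beta}$. That leading term is $\sum_{s,t}\xi_{i,s}\otimes \rho_z(\xi_{i,t})\,\xi_{i,p-s-t-2}$, where $\rho_z(\xi_i(u))=\rho(\xi_i(u-z))$ is a scalar rational function of $u$ and $z$.

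I would project the intertwining equation for $x=\xi_{i,p}$ onto the $(-\beta,\beta)$-component. This gives a recursion of the form
$$\mathcal{D}_{i,p}(z)\bigl(\Theta_\beta\bigr) = (\textrm{expression in } \Theta_\gamma,\ \gamma<\beta).$$
Here $\mathcal{D}_{i,p}(z)$ is the operator "left multiplication by the Cartan part of $\Delta^{(2)}_z$ minus right multiplication by that of $\Delta^{(1)}_z$". I expect that, after summing over $p$ into generating series in $u$ and expanding in $z^{-1}$, $\mathcal{D}$ has a leading coefficient given by a nonzero scalar multiple of $(\alpha_i,\beta)$-type pairings. It is therefore invertible on $(Y_\mu(\Glie)_{-\beta}\otimes Y_\nu(\Glie)_\beta)((z^{-1}))$ once we pick $i$ with $(\alpha_i,\beta)\neq 0$. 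This invertibility is precisely why the $z$-completed tensor product of Definition \ref{defi: completion} is the right home, and it yields existence of a candidate and uniqueness at once.

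To see that the candidate intertwines all $x$, I would consider $\Theta\,\Delta^{(1)}_z(x)\,\Theta^{-1}-\Delta^{(2)}_z(x)$ for $x=x_{i,n}^\pm$. This element again satisfies a homogeneous version of the same Cartan recursion. Its lowest components vanish by Theorem \ref{thm: asso trivial}, since the two maps agree after projecting an outer factor to the trivial quotient. Invertibility of $\mathcal{D}$ then forces it to vanish.

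\emph{Shift independence and polynomiality (Part (2)).} Uniqueness together with Theorem \ref{thm: coproduct shifted Yangian}(3) shows that $(\iota\otimes\iota)(\Theta_K^{\mu,\nu})$ satisfies the defining property of $\Theta_K^{\mu+\epsilon',\nu+\eta'}$. Hence $\Theta_{K,\beta}$ is independent of $\mu,\nu$ under $Y^\pm_\mu\cong Y^\pm_0$, and the coefficients lie in $Y^-\otimes Y^+$ because of the triangular form of the recursion. So I may take $\mu,\nu$ as antidominant as needed, where Theorem \ref{thm: coproduct shifted Yangian}(2) makes many generators primitive.

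For polynomiality I would first treat $K$ with $\rho(\xi_i(u))$ having a single linear factor, using the explicit rank-one formulas coming from \eqref{equ: trivial times}--\eqref{equ: times  trivial}. I would then use the multiplicativity $K=K_1\otimes K_2$, via coassociativity Theorem \ref{thm: coproduct shifted Yangian}(4), to express $\Theta_K$ as a product of $\Theta_{K_1}$ and $\Theta_{K_2}$. The degree bound $\langle\epsilon,\beta\rangle$ should follow by comparing the leading $z$-behaviour of $\rho_z(\xi_i(u))$, which is $u^{m_i}$-type with $m_i=\langle\epsilon,\alpha_i\rangle$, along the recursion.

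I expect the main obstacle to be this polynomiality step. A priori the recursion only produces Laurent series in $z^{-1}$, so showing the tails vanish requires a genuinely global input, namely the finite expressions in the rank-one case plus multiplicativity, rather than the local inversion of $\mathcal{D}$.
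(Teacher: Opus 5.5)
\textbf{Existence: the Cartan recursion cannot produce $\Theta_K$.} The central step fails. You note yourself that $\Delta^{(1)}_z(\xi_i(u))$ and $\Delta^{(2)}_z(\xi_i(u))$ have the same Cartan part. More precisely, by Lemma~\ref{lem: Yangian coproduct estimation} and \eqref{equ: trivial times}--\eqref{equ: times  trivial} applied to the middle factor, both equal $\Bp_i(u-z)\,\xi_i(u)\otimes\xi_i(u)$ modulo off-diagonal terms, where $(\Bp_i)$ is the $\ell$-weight of $K$. So $z$ enters the diagonal part only through a common invertible scalar.

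On $(-\beta,\beta)$-components your $\mathcal{D}$ is therefore $\Bp_i(u-z)\,\mathrm{ad}(\xi_i(u)\otimes\xi_i(u))$. The term $\mathrm{ad}(\xi_{i,s}\otimes 1+1\otimes\xi_{i,t})$ acts by $-(\alpha_i,\beta)+(\alpha_i,\beta)=0$. The next coefficient is $\mathrm{ad}$ of the $z$-free element $\xi_{i,s+1}\otimes 1+1\otimes\xi_{i,t+1}+\xi_{i,s}\otimes\xi_{i,t}$, with $s=-\langle\mu,\alpha_i\rangle$ and $t=-\langle\nu,\alpha_i\rangle$. There is no term $(\alpha_i,\beta)z$: in Proposition~\ref{prop: stable uniqueness} that term comes from $\tau_z$ acting on a non-scalar tensor factor, which does not happen here.

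The remaining operator raises loop degree. For example, on $Y^-_\mu(\Glie)_{-\alpha_j}\otimes Y^+_\nu(\Glie)_{\alpha_j}$ it is multiplication by $b_{ij}(X-Y)$ on $\BC[X,Y]$, which is not onto. So your recursion gives at most uniqueness (and even injectivity needs its own argument), not existence. Solvability at each weight is a compatibility condition you never verify, and the same gap undermines your argument that the candidate intertwines every $x$.

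The paper imports this theorem from \cite{Z2}, where existence is global. As Remark~\ref{rem: Theta Omega} indicates, $\Theta_K$ is the normalized middle factor $\mathbf{\Omega}_K$ of the shifted coproduct of the S-series $\mathcal{S}_K$. Its intertwining property comes from the module isomorphisms $\CR_{K,N}(z)=\mathcal{D}_{K,N}(z)\circ\mathcal{S}_K(z)|_N$ of Theorem~\ref{thm: one-dim R}, combined with Theorem~\ref{thm: asso trivial}: one compares moving $K$ across a tensor product in one step versus two. Shift-independence then follows from $\mathcal{S}^{\mu}_i=\mathcal{S}^0_i$ in Proposition~\ref{prop: S-series}. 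Your $\iota$-argument only gives intertwining on the image of a shift homomorphism, which is a proper subalgebra, so uniqueness does not apply directly.

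\textbf{Polynomiality: the base case you need is not available.} The formulas \eqref{equ: trivial times}--\eqref{equ: times  trivial} describe coproducts with a trivial factor. They pin down $\Theta_K$ only on the components $n\alpha_j$ (Example~\ref{ex: Theta}). For a prefundamental $K$ in general type, no closed formula for $\Theta_{K,\beta}$ is known; one exists only recently for $\mathfrak{sl}_{r+1}$. So the single-linear-factor case is essentially the whole theorem, not a starting point.

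Multiplicativity also cannot come from Theorem~\ref{thm: coproduct shifted Yangian}(4). That result needs an antidominant middle coweight, whereas one-dimensional modules live over dominantly shifted Yangians. You must use Theorem~\ref{thm: asso trivial} instead, and then the factors get twisted (compare the $u(u-a)$ example in Remark~\ref{rem: Theta Omega}). The degree bound and the claim that the coefficients lie in $Y^-\otimes Y^+$ are likewise unsupported, since the off-diagonal input to your recursion lies in $Y^{\leq}\otimes Y^{\geq}$.

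\textbf{A global argument that does work here.} Take the Yangian analogue of the Remark right after the proof of Theorem~\ref{thm: conjugation formula qaf}.
\begin{itemize}
\item Exponentiating Proposition~\ref{prop: R- intertwining} shows that $\CR^-(z)$ conjugates $(\tau_z\otimes\mathrm{Id})\Delta_{\mu,\nu}(\mathcal{S}^{\mu+\nu}_K(w))$ into $\tau_z(\mathcal{S}^{\mu}_K(w))\otimes\mathcal{S}^{\nu}_K(w)$.
\item Hence $(\tau_z\otimes\mathrm{Id})(\mathbf{\Omega}_{K,\beta}(w))$ is a finite sum of products of S-series conjugates of components of $\CR^-(z)^{\pm1}$.
\item By Proposition~\ref{prop: S-series}, these conjugations are polynomial in the $(w-a)^{-1}$, with degree controlled by $\beta$.
\item After normalizing by $\prod_i\Bp_i^{\langle\varpi_i^\vee,\beta\rangle}$, this gives polynomiality, the bound $\langle\epsilon,\beta\rangle$, and coefficients in $Y^-\otimes Y^+$.
\end{itemize}
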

We call $\Theta_K^{\mu,\nu}(z)$ a Theta series and drop occasionally the superscripts $\mu$ and $\nu$. The polynomiality implies that $\Theta_K(s)$ is a well-defined element of $Y_{\mu}^-(\Glie) \otimes_z Y_{\nu}^+(\Glie)$ for any complex number $s \in \BC$. As a consequence of the intertwining property and the unitriangularity, if $M$ is a $Y_{\mu}(\Glie)$-module and $N$ is a $Y_{\nu}(\Glie)$-module, one of which is finite-dimensional, then the Theta series $\Theta_K(0)$ evaluated at $M \otimes N$ converges to a linear automorphism of $M\otimes N$ which is interpreted as a $Y_{\mu+\epsilon+\nu}(\Glie)$-module isomorphism $$\Theta_K(0)|_{M,N}: (M\otimes K) \otimes N \longrightarrow M \otimes (K\otimes N).$$

\begin{rem} \label{rem: coideal associativity}
    As in Remark \ref{rem: coideal}, we can restrict Theorems \ref{thm: asso trivial} and \ref{thm: Theta} to coideal subalgebras. Let $M$ be a $Y_{\mu}(\Glie)$-module, $K$ be a $Y_{\epsilon}(\Glie)$-module and $N$ be a $Y_{\nu}^{\geq}(\Glie)$-module. Then both triple tensor products $(M\otimes K)\otimes N$ and $M\otimes (K\otimes N)$ are modules over $Y_{\mu+\epsilon+\nu}^{\geq}(\Glie)$ by the coideal subalgebra property.
    \begin{itemize}
        \item[(1)] If $M$ is a trivial module, then the identity map is a $Y_{\mu+\epsilon+\nu}^{\geq}(\Glie)$-module isomorphism of the triple tensor products. 
        \item[(2)] If $K$ is one-dimensional and $M$ or $N$ is finite-dimensional, then the Theta series $\Theta_K(0) \in Y_{\mu}^-(\Glie)\otimes_z Y_{\nu}^+(\Glie)$ evaluated at $M\otimes N$ defines a $Y_{\mu+\epsilon+\nu}^{\geq}(\Glie)$-module isomorphism from $(M\otimes K) \otimes N$ to $M\otimes (K\otimes N)$. 
    \end{itemize}
    Similar statements hold for the coideal subalgebra $Y_{\mu+\epsilon+\nu}^{\leq}(\Glie)$.
\end{rem}

\begin{example}  \label{ex: Theta}
    Fix $i \in I$ and $\mu = \nu = 0$. Let $\Theta_i(z)$ denote the Theta series associated with the algebra homomorphism $Y_{\varpi_i^{\vee}}(\Glie) \longrightarrow \BC$ sending $\xi_j(u)$ to $u^{\delta_{ij}}$; this corresponds to a positive prefundamental representation \cite[Remark 24]{Z1}. As in Example \ref{ex: Theta qaf}, we have the following formula from \cite[Example 5.8]{Z2}:  
    $$ \sum_{n\geq 0} \Theta_{i,n\alpha_j}(z) = \exp\left( \delta_{ij}x_{i,0}^- \otimes x_{i,0}^+\right)\quad \textrm{for $j \in I$}. $$
    Recently, an explicit formula of $\Theta_i(z)$ for $\Glie = \mathfrak{sl}_{r+1}$ has been obtained in \cite{Milot}.
\end{example}

\subsection{S-series}
In \cite{Z2} we constructed another family of generating series, called S-series, in the commutative Drinfeld--Cartan subalgebra of a shifted Yangian. They can be seen as a Yangian analog of the T-series of Frenkel--Hernandez \cite{FH}. In this subsection, we give an intrinsic characterization of S-series.

Given a shifted Yangian $Y_{\mu}(\Glie)$, consider the normalized Drinfeld--Cartan series $\overline{\xi}_i(u)$, for $i \in I$. They are power series in $u^{-1}$ of constant term 1 with coefficients in the commutative Drinfeld--Cartan subalgebra $Y_{\mu}^0(\Glie)$. Let $H_{\mu}$ denote the subspace of $Y_{\mu}^0(\Glie)$ linearly spanned by the coefficients of the logarithms $\log(\overline{\xi}_i(u))$ for all $i \in I$. All the shift homomorphisms of \eqref{def: shift homo} preserve $\overline{\xi}_i(u)$ and hence its logarithm. They induce a natural identification of vector spaces
$$ H_{\mu} \cong H_0,\qquad  \log \overline{\xi}_i(u) \mapsto \log \xi_i(u). $$
\begin{prop}  \label{prop: S-series}
For $\mu$ a coweight and $i \in I$,
there exists a unique power series 
$$\mathcal{S}_i^{\mu}(z) \in \exp\left( z^{-1} H_{\mu}[[z^{-1}]] \right) \subset Y_{\mu}^0(\Glie)[[z^{-1}]]$$
satisfying the following commutation relations with the Drinfeld generators $x_{j,n}^{\pm} \in Y_{\mu}(\Glie)$:
   \begin{gather*}
       \mathcal{S}_i^{\mu}(z) x_{j,n}^{\pm} = x_{j,n}^{\pm} \mathcal{S}_i^{\mu}(z) \quad \textrm{if $j \neq i$,}  \\
       \mathcal{S}_i^{\mu}(z) x_{i,n}^- = (x_{i,n}^- + z^{-1}x_{i,n+1}^-) \mathcal{S}_i^{\mu}(z), \quad x_{i,n}^+ \mathcal{S}_i^{\mu}(z) = \mathcal{S}_i^{\mu}(z) (x_{i,n}^+ + z^{-1}x_{i,n+1}^+).
   \end{gather*}
 Under the natural identification $H_{\mu} = H_0$, we have $\mathcal{S}_i^{\mu}(z) = \mathcal{S}_i^0(z)$.
\end{prop}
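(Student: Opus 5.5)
We look for $\mathcal{S}_i^{\mu}(z)$ in the form $\exp\bigl(\sum_{s>0} g_{i,s} z^{-s}\bigr)$ with $g_{i,s}\in H_{\mu}$, and we translate the required commutation relations into linear conditions on the $g_{i,s}$. Write $\log \overline{\xi}_j(u) = \sum_{s>0} t_{j,s} u^{-s}$, so that $H_\mu$ is spanned by the $t_{j,s}$. The first step is to compute $[t_{j,s}, x_{k,n}^{\pm}]$ from the defining relations. The relation between $\xi_{j,p}$ and $x_{k,n}^{\pm}$ is equivalent to the standard generating-series identity
$$\overline{\xi}_j(u)\, x_k^{\pm}(v)\, \overline{\xi}_j(u)^{-1} \;=\; \frac{u-v\pm \tfrac12 b_{jk}}{u-v\mp \tfrac12 b_{jk}}\, x_k^{\pm}(v)$$
modulo terms independent of $v$ (the usual Yangian computation; it holds verbatim in the shifted case because the shift only affects the leading term $u^{m_j}$, which normalization removes). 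Taking logarithms gives
$$[t_{j,s}, x_{k,n}^{\pm}] = \pm \sum_{r} c_{s}^{(jk)}(r)\, x_{k,n+r}^{\pm},$$
a finite linear combination of the shifted generators $x_{k,n+r}^{\pm}$ with $0\le r\le s-1$. The coefficients are polynomials in $b_{jk}$ coming from the expansion of $\log\frac{1-(v\mp b/2)/u}{1-(v\pm b/2)/u}$, and they depend only on $b_{jk}$ and $s$, not on $\mu$. In particular $\mathrm{ad}_{t_{j,s}}$ acts on $\mathrm{span}\{x_{k,n}^{\pm}\}_n$ as $\pm P_{jk,s}(\sigma)$, where $\sigma: x_{k,n}\mapsto x_{k,n+1}$ and $P_{jk,s}$ is a polynomial of degree $<s$ with leading behaviour $\pm b_{jk}\sigma^{s-1}$.

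Next, for $g = \sum_s\sum_j \lambda_{j,s} t_{j,s} z^{-s}$, the conjugation $\exp(g)\,x_{k,n}^{\pm}\exp(-g)$ equals $\exp\bigl(\pm\sum_{s,j}\lambda_{j,s}P_{jk,s}(\sigma)z^{-s}\bigr)x_{k,n}^{\pm}$. The target relations ask for $1+z^{-1}\sigma$ for $x^-$ at $k=i$, for $(1+z^{-1}\sigma)^{-1}$ for $x^+$ (rewritten as a conjugation by $\mathcal S$), and for $1$ when $k\ne i$. Taking logarithms, we need, for each $k$,
$$\sum_{j,s}\lambda_{j,s} P_{jk,s}(\sigma) z^{-s} = -\delta_{ik}\log(1+z^{-1}\sigma)$$
as power series in $z^{-1}$ with polynomial coefficients in $\sigma$; the signs for $x^\pm$ are consistent. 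Rather than using the polynomials $P$ directly, it is cleaner to pass to the generating function in $\sigma$: write $\log \overline\xi_j(u)$ acting via $\mathrm{ad}$ as the symbol $\log\frac{u-\sigma + b_{jk}/2}{u-\sigma-b_{jk}/2}$. This converts the system into a difference equation: we need $\Lambda_j(z)$ (the generating series of the $\lambda_{j,s}$, viewed as an operator in the shift $u\mapsto u\pm 1/2$) solving $\sum_j [b_{jk}]\text{-type difference operator}\cdot \Lambda_j = \delta_{ik}\cdot(\text{source})$. This is the Yangian analogue of inverting $([b_{jk}]_q)$ in the definition of $T_i^{\pm}(z)$, so we solve it exactly as in \eqref{def: T}: the symmetrized Cartan matrix with entries $\frac{e^{b_{jk}x/2}-e^{-b_{jk}x/2}}{x}$ (in the Borel variable $x$ dual to $z$) is invertible as a formal power series matrix because its value at $x=0$ is $(b_{jk})$, which is invertible. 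Solving degree by degree in $z^{-1}$ then gives existence: at order $z^{-s}$ the leading part of the equation is $\sum_j b_{jk}\lambda_{j,s}\sigma^{s-1}$ plus known lower terms. Uniqueness follows from the same triangularity together with the fact that the map $H_\mu\to$ derivations is injective on the span of the $t_{j,s}$, since the leading term $b_{jk}\sigma^{s-1}$ detects $\lambda_{j,s}$.

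Finally, because the $P_{jk,s}$ and hence the solution $\lambda_{j,s}$ do not depend on $\mu$, and the identification $H_\mu\cong H_0$ sends $t_{j,s}\mapsto t_{j,s}$, we get $\mathcal S_i^\mu(z)=\mathcal S_i^0(z)$. The main obstacle is the first step: establishing cleanly, in the shifted setting, the exact commutator $[t_{j,s},x_{k,n}^\pm]$ as a polynomial in $\sigma$. This includes checking that the $\xi$–$x$ relation translates into the multiplicative identity above despite $\xi_{j,p}$ with $p<0$ and the leading $u^{m_j}$. We would handle this by applying the shift homomorphisms, which preserve $\overline\xi$ and $x^\pm$ with index shifts, to reduce to a dominant or zero shift, and then quoting the standard ordinary-Yangian computation.
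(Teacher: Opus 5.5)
There is a genuine gap in your existence step. Your first step is sound, and easier than you fear. The $\xi$--$x$ relation holds for all $p\in\BZ$ in $Y_\mu(\Glie)$, with $\xi_{j,-m_j-1}=1$. Summing it against $u^{-p-2}$ gives exactly $\mathrm{Ad}_{\overline{\xi}_j(u)}=\frac{u-\sigma\pm b_{jk}/2}{u-\sigma\mp b_{jk}/2}$ on $\mathrm{span}\{x^{\pm}_{k,n}\}_n$. Hence $\mathrm{ad}_{t_{j,r}}=\pm P_{jk,r}(\sigma)$ with $P_{jk,r}(\sigma)=\frac{1}{r}\bigl[(\sigma+\frac{b_{jk}}{2})^r-(\sigma-\frac{b_{jk}}{2})^r\bigr]$, and no shift homomorphisms are needed.

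The problem is your ansatz $g=\sum_s\sum_j\lambda_{j,s}t_{j,s}z^{-s}$, which pairs $t_{j,s}$ with $z^{-s}$. The $z^{-s}$-coefficient of the target $-\delta_{ik}\log(1+z^{-1}\sigma)$ is $\delta_{ik}\frac{(-1)^s}{s}\sigma^s$, of $\sigma$-degree $s$. But $\sum_j\lambda_{j,s}P_{jk,s}(\sigma)$ has degree at most $s-1$, so your system has no solution. Your claim that the order-$z^{-s}$ equation has leading part $\sum_j b_{jk}\lambda_{j,s}\sigma^{s-1}$ ``plus known lower terms'' ignores this top-degree term on the right.

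Concretely, at $s=1$ your $g$ contains only $t_{j,1}=\xi_{j,-m_j}$. This acts on $x^{\pm}_{k,n}$ by the scalar $\pm b_{jk}$, so it can never produce $z^{-1}x^-_{i,n+1}$. The true $z^{-1}$-coefficient involves the $t_{j,2}$; for $\mathfrak{sl}_2$ it is $-\frac12 t_{1,2}$ (compare the element $\xi_{i,-m_i+1}-\frac12\xi_{i,-m_i}^2$ in the proof of Lemma~\ref{lem: trivilization}). The pairing works for the $T$-series of \eqref{def: T} only because $\mathrm{ad}_{h_{j,s}}$ is homogeneous, a pure shift by $s$. Here $t_{j,r}$ acts by an inhomogeneous polynomial of degree $r-1$ in $\sigma$.

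To repair this, let each coefficient $h_s$ of $\log\mathcal{S}_i^{\mu}(z)=\sum_{s>0}h_sz^{-s}$ be an arbitrary element of $H_\mu$. Encode $\sum_{j,r}c_{j,r}t_{j,r}$ by the $I$-tuple of polynomials $f_j(w)=\sum_r c_{j,r}w^{r-1}$. Since $P_{jk,r}(\sigma)=\int_{-b_{jk}/2}^{b_{jk}/2}(\sigma+w)^{r-1}\,dw$, the ad-action on $\mathrm{span}\{x^{\pm}_{k,n}\}_n$ is $\pm\sum_j\mathcal{B}_{kj}(\partial_\sigma)f_j(\sigma)$, where $\mathcal{B}_{kj}(x)=\frac{e^{b_{jk}x/2}-e^{-b_{jk}x/2}}{x}$. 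So your matrix is the right one, but the triangularity must be taken in $\sigma$-degree, with $\partial_\sigma$ as the variable, not in the order of $z^{-1}$.

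Since $\partial_\sigma$ is locally nilpotent on $\BC[\sigma]$ and $\mathcal{B}(0)=(b_{jk})$ is invertible, $\mathcal{B}(\partial_\sigma)$ is an automorphism of $\BC[\sigma]^I$ preserving the degree filtration. Hence for each $s$ there is a unique $f^{(s)}$, of degree $\le s$, whose image has $k$-th component $\delta_{ik}\frac{(-1)^s}{s}\sigma^s$. In other words, $h_s$ is a combination of the $t_{j,r}$ with $r\le s+1$, and this gives existence. Uniqueness is your leading-term argument applied to the top index $r$, i.e.\ injectivity of $f\mapsto\mathcal{B}(\partial_\sigma)f$. This is precisely the fact the paper quotes from \cite[Eq.(2.3)]{Z2}. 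Independence of $\mu$ holds because $\mathcal{B}$ does not involve $\mu$.

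With this correction your argument becomes a self-contained derivation. The paper, by contrast, simply imports the construction from \cite[Proposition 4.1]{Z2}.
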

\begin{proof}
    Our $\mathcal{S}_i^{\mu}(z)$ corresponds to $S_i(-z)$ in \cite[Proposition 4.1]{Z2}; see Equation (4.10), the proof of Lemma 5.1 and Step 4 of the proof of Theorem 5.7 therein. For uniqueness, it suffices to show that any element of $H_{\mu}$ that commutes with all the $x_{j,n}^{\pm}$ must be zero. This follows from \cite[Eq.(2.3)]{Z2}.
\end{proof}
We add a minus sign to the S-series of \cite{Z2} to simplify the formulas of R-matrices in the next subsection. The spectral parameter $z$ is used instead of the formal variable $u$ of generating series. This is to emphasize that S-series are essentially R-matrices. We comment that evaluations of S-series at highest weight representations of shifted Yangians were studied earlier in \cite{HZ} and \cite{GW} and were characterized as solutions to additive difference equations defined by another family of modified Drinfeld--Cartan series, due to Gerasimov--Kharchev--Lebedev--Oblezin \cite{GKLO}. 

We attach S-series to one-dimensional modules of shifted Yangians.   
Recall first from \cite[\S 4.1]{HZ} the bijection between $I$-tuples of complex monic polynomials in one variable and isomorphism classes of one-dimensional modules over shifted Yangians 
$$ \mathbf{p} \leadsto L(\mathbf{p}). $$
Here $\mathbf{p} = (\mathbf{p}_i(u))_{i\in I}$ and $\mathbf{p}_i(u)$ is a monic polynomial in $\BC[u]$ for $i \in I$. The corresponding one-dimensional module $L(\mathbf{p})$ is defined over the dominantly shifted Yangian $Y_{\epsilon}(\Glie)$ with $\epsilon := \sum_{i\in I} \deg (\mathbf{p}_i) \varpi_i^{\vee}$ by the algebra homomorphism $Y_{\epsilon}(\Glie) \longrightarrow \BC$ sending $\xi_i(u)$ to $\mathbf{p}_i(u)$ and $x_i^{\pm}(u)$ to zero. We call $\mathbf{p}$ the $\ell$-weight of the one-dimensional module. 
 
Given a one-dimensional $Y_{\epsilon}(\Glie)$-module $K$ of $\ell$-weight $(\mathbf{p}_i(u))_{i\in I}$, let us first factorize the monic polynomials in the $\ell$-weights
$$ \mathbf{p}_i(u) = (u-a_{i1}) (u-a_{i2}) \cdots (u-a_{i,k_i}) \quad \textrm{for $i \in I$}. $$ 
For $\mu$ a coweight, the S-series associated to the module $K$ is defined as \cite[(4.12)]{Z2}
$$ \mathcal{S}_K^{\mu}(z) := \prod_{i\in I} \mathcal{S}_i^{\mu}(z - a_{i1}) \mathcal{S}_i^{\mu}(z-a_{i2}) \cdots \mathcal{S}_i^{\mu}(z-a_{i,k_i}) \in  1 + z^{-1} Y_{\mu}^0(\Glie)[[z^{-1}]]. $$

\begin{rem}  \label{rem: Theta Omega}
    Theta series can be computed from the shifted coproduct of S-series. Given two coweights $\mu$ and $\nu$, we take the factorization \cite[Definition 5.2]{Z2} 
$$ \Delta_{\mu,\nu}(\mathcal{S}_K^{\mu+\nu}(z)) = (1\otimes \mathcal{S}_K^{\nu}(z)) \times \mathbf{\Omega}_K^{\mu,\nu}(z) \times (\mathcal{S}_K^{\mu}(z) \otimes 1). $$
The middle factor $\mathbf{\Omega}_K^{\mu,\nu}(z)$ at the right-hand side is a power series in $z^{-1}$ with coefficients in $Y_{\mu}(\Glie)\otimes Y_{\nu}(\Glie)$ and of constant term $1\otimes 1$. It is also unitriangular as Theta series. Restricted to weight components, we have
$$ \Theta_{K,\beta}^{\mu,\nu}(z) = \mathbf{\Omega}_{K,\beta}^{\mu,\nu}(-z) \times \prod_{i\in I} \mathbf{p}_i(-z)^{\langle \varpi_i^{\vee},\beta\rangle} \quad \textrm{for $\beta \in \BQ_+$}. $$
As an example, let $a \in \BC$ and $K$ be the one-dimensional $Y_{2\varpi_1^{\vee}}(\mathfrak{sl}_2)$-module of $\ell$-weight $u(u-a)$. Then from Example \ref{ex: Theta}(2) we know the coproduct of $\mathcal{S}_1(z)$ and hence of $\mathcal{S}_K(z) = \mathcal{S}_1(z)\mathcal{S}_1(z-a)$. This in turn gives us the Theta series
    $$ \Theta_K(z) = \exp\left(x_{1,0}^-\otimes (x_{1,1}^+-zx_{1,0}^+) \right) \times \exp\left((x_{1,1}^--(z+a)x_{1,0}^-) \otimes x_{1,0}^+ \right). $$
\end{rem}

\subsection{R-matrices for one-dimensional modules}  \label{ss: one-dim R}
Fix $K$ to be a one-dimensional module over a dominantly shifted Yangian $Y_{\epsilon}(\Glie)$. For $N$ a graded module over a shifted Yangian $Y_{\nu}(\Glie)$, we recall from \cite[\S 4]{Z2} the construction of $Y_{\epsilon+\nu}(\Glie)$-module morphisms between completions of tensor product modules $K\otimes N$ and $N\otimes K$.

Recall that the shifted Yangian $Y_{\nu}(\Glie)$ is $\BQ$-graded. By a graded module over this algebra, we mean an ordinary module $N$ that is a direct sum of vector subspaces $N_{\beta}$, for $\beta \in \BQ$, such that 
$$ Y_{\nu}(\Glie)_{\gamma} N_{\beta} \subset N_{\gamma+\beta} \quad \textrm{for $\beta, \gamma \in \BQ$.} $$
In this situation, consider the subspace of Laurent series in $z^{-1}$  with coefficients in $N$:
$$ N^z := \sum_{\beta \in \BQ} N_{\beta}((z^{-1})) \subset N((z^{-1})). $$
Alternatively, $N^z$ is the $z$-completed tensor product $\BC \otimes_z N$ where the $\BQ$-graded one-dimensional vector space $\BC$ is concentrated in degree zero. It contains $N[z]$.

Let $\rho: Y_{\epsilon}(\Glie) \longrightarrow \BC$ be the structural map of $K$. The space $N[z]$ of polynomials is naturally a $Y_{\nu}(\Glie)[z]$-module by scalar extension. Its pullbacks along the following two algebra homomorphisms
$$ (\rho \otimes \tau_z) \circ \Delta_{\epsilon,\nu} \quad \textrm{and}\quad (\tau_z \otimes \rho) \circ \Delta_{\nu,\epsilon}: Y_{\epsilon+\nu}(\Glie) \longrightarrow Y_{\nu}(\Glie)[z] $$
define two $Y_{\epsilon+\mu}(\Glie)$-modules, called deformed tensor products, and denoted by $K\otimes N_z$ and $N_z \otimes K$, respectively. Their evaluations at $z = 0$ recover the usual tensor product modules $K \otimes N$ and $N\otimes K$. By replacing $N[z]$ with the larger space $N^z$, we obtain two $Y_{\epsilon+\nu}(\Glie)$-modules, denoted by $\overline{K \otimes N_z}$ and $\overline{N_z \otimes K}$, respectively. They contain the deformed tensor product modules $K \otimes N_z$ and $N_z \otimes K$ as submodules.

Let $(\Bp_i(u))_{i\in I}$ be the $\ell$-weight of the one-dimensional module $K$.
Define the linear automorphism $\mathcal{D}_{K,N}(z)$ of $N^z$ by
$$ \mathcal{D}_{K,N}(z) (v) = v \times \prod_{i\in I} \mathbf{p}_i(z)^{-\langle \varpi_i^{\vee},\beta\rangle} \quad \textrm{for $\beta \in \BQ$ and $v \in N_{\beta}((z^{-1}))$.} $$
We also evaluate the power series $\mathcal{S}_K^{\nu}(z) \in Y_{\nu}^0(\Glie)[[z^{-1}]]$ at $N^z$ to obtain another linear automorphism $\mathcal{S}_K(z)|_N$ and then take their composition \begin{equation*} 
    \CR_{K,N}(z) := \mathcal{D}_{K,N}(z) \circ \mathcal{S}_K^{\nu}(z)|_N \in \mathrm{Aut}(N^z). 
\end{equation*}
 
\begin{theorem} \cite[Theorem 4.6]{Z2} \label{thm: one-dim R}
    For $K$ a one-dimensional $Y_{\epsilon}(\Glie)$-module and $N$ a root graded $Y_{\nu}(\Glie)$-module, the linear automorphism $\CR_{K,N}(z)$ of $N^z$ is a $Y_{\epsilon+\nu}(\Glie)$-module isomorphism of the $z$-completed tensor product modules
$$ \CR_{K,N}(z): \overline{K\otimes N_z} \longrightarrow \overline{N_z \otimes K}. $$ 
\end{theorem}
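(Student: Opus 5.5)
Since $\CR_{K,N}(z)$ is by construction a linear automorphism of $N^z$ (both factors are: $\mathcal{S}_K^{\nu}(z)$ is a power series in $z^{-1}$ with constant term $1$, and $\mathcal{D}_{K,N}(z)$ rescales each weight space by an invertible Laurent series in $z^{-1}$), the plan is to check the intertwining property on the Drinfeld generators $\xi_{i,p}$ and $x_{i,n}^{\pm}$ of $Y_{\epsilon+\nu}(\Glie)$. First we compute both module structures explicitly. Because $K$ is trivial in the sense of Definition \ref{defi: trivial module}, the projection formulas \eqref{equ: trivial times} and \eqref{equ: times  trivial} apply after composing with $\rho\otimes\tau_z$ and $\tau_z\otimes\rho$.

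On $\overline{K\otimes N_z}$ the actions are
$$\xi_i(u)\mapsto \Bp_i(u)\,\xi_i(u-z),\qquad x_i^-(u)\mapsto x_i^-(u-z),\qquad x_i^+(u)\mapsto \langle \Bp_i(u)\, x_i^+(u-z)\rangle_+^u .$$
On $\overline{N_z\otimes K}$ they are
$$\xi_i(u)\mapsto \xi_i(u-z)\,\Bp_i(u),\qquad x_i^+(u)\mapsto x_i^+(u-z),\qquad x_i^-(u)\mapsto \langle x_i^-(u-z)\,\Bp_i(u)\rangle_+^u .$$
Here the series $x_i^\pm(u-z)$ are expanded in $u^{-1}$, and the projections $\langle\cdot\rangle_+$ still make sense on $N^z$.

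The Cartan part is immediate: both sides act by the same element of the commutative subalgebra tensored with $\Bp_i(u)$. This element commutes with $\mathcal{S}_K^{\nu}(z)$ and preserves weight spaces, so it commutes with $\mathcal{D}_{K,N}(z)$ as well.

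For $x_{i,n}^-$ we must show
$$\CR(z)\, x_i^-(u-z) = \langle x_i^-(u-z)\,\Bp_i(u)\rangle_+\, \CR(z).$$
We first move $\mathcal{S}_K(z)$ past $x_{i,n}^-$ using Proposition \ref{prop: S-series}. Each factor $\mathcal{S}_i(z-a)$ acts on $x_{i,n}^-$ by $x_{i,n}^-+(z-a)^{-1}x_{i,n+1}^-$, and the factors with $j\ne i$ act trivially. Writing this as the action of a shift operator $\sigma$ ($x_{i,n}^-\mapsto x_{i,n+1}^-$), conjugation by $\mathcal{S}_K(z)$ multiplies by $\prod_k (1+(z-a_{ik})^{-1}\sigma)=\Bp_i(z+\sigma)/\Bp_i(z)$. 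Next, since $x_{i,n}^-$ lowers the weight by $\alpha_i$, moving it past $\mathcal{D}_{K,N}(z)$ produces the factor $\Bp_i(z)^{+1}$. The two factors of $\Bp_i(z)$ cancel, leaving $x_i^-(u-z)$ acted upon by $\Bp_i(z+\sigma)$.

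The key identity is then the generating-series computation
$$\Bp_i(z+\sigma)\,x_i^-(u-z)=\langle \Bp_i(u)\,x_i^-(u-z)\rangle_+^u .$$
This holds because $\sigma$ acting on $x^-(w)=\sum x_n w^{-n-1}$ equals multiplication by $w$ followed by truncation of the polynomial part, and $z+w=u$ when $w=u-z$. Truncation composes well, so an induction on the degree of $\Bp_i$ suffices. The case of $x_{i,n}^+$ is symmetric: it uses the right-multiplication relation of Proposition \ref{prop: S-series}, and the weight factor now appears on the opposite side.

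I expect the main obstacle to be bookkeeping of the expansions. One must check that $\langle\cdot\rangle_+$ in $u$ commutes with re-expansion of $x_i^-(u-z)$ in $u^{-1}$, and that everything converges in $N^z$. The latter requires $N$ to be graded, so that each weight component is a Laurent series in $z^{-1}$. I would handle this by working coefficientwise in $u^{-1}$ and noting that each coefficient involves only finitely many $x_{i,m}^-$ with polynomial coefficients in $z$.
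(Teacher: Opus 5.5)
Your proof is correct, but it takes a different route from the paper. The paper does not verify the statement on generators. It quotes \cite[Theorem 4.6]{Z2}, where $\CR_{K,N}(z)$ is shown to be an isomorphism between completed modules in which the spectral parameter sits on the one-dimensional factor, namely $K_{-z}\otimes N \to N\otimes K_{-z}$. It then moves the spectral parameter onto $N$ using Proposition \ref{prop: coproduct spectral}. Since $(\rho\otimes\tau_z)\circ\Delta_{\epsilon,\nu} = (\rho_{-z}\otimes\mathrm{Id})\circ\Delta_{\epsilon,\nu}\circ\tau_z$, and likewise for $\Delta_{\nu,\epsilon}$, the modules $\overline{K\otimes N_z}$ and $\overline{N_z\otimes K}$ are pullbacks of the earlier ones along the same automorphism $\tau_z$. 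Hence the same linear map still intertwines them.

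You instead work directly in the $N_z$ normalization. Because $K$ is trivial, the projection formulas \eqref{equ: trivial times}--\eqref{equ: times  trivial}, composed with $\rho\otimes\tau_z$ and $\tau_z\otimes\rho$, already give both actions explicitly. So you never need the general compatibility of $\Delta_{\mu,\nu}$ with $\tau_a$. What remains is:
\begin{itemize}
\item[(i)] Proposition \ref{prop: S-series};
\item[(ii)] the factor $\Bp_i(z)$ produced by conjugating with $\mathcal{D}_{K,N}(z)$;
\item[(iii)] the identity $\Bp_i(z+\sigma)\,x_i^{\pm}(u-z) = \langle \Bp_i(u)\,x_i^{\pm}(u-z)\rangle_+^u$.
\end{itemize}
The identity in (iii) does follow by induction on the degree of $\Bp_i$ from $(z+\sigma)\langle Q(u)X(u)\rangle_+ = \langle uQ(u)X(u)\rangle_+$ for $X(u)=x_i^{\pm}(u-z)$. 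This holds because $\sigma$ acts coefficientwise and sends polynomial parts to polynomial parts.

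Your route is self-contained. In effect it proves the conjugation relations \eqref{equ: conjugation T Yangian} first, directly from the S-series commutation relations, and deduces the theorem from them. The paper goes the other way and derives \eqref{equ: conjugation T Yangian} as a consequence of the theorem. The paper's route is shorter here because it reuses the earlier result together with Proposition \ref{prop: coproduct spectral}, which it also needs for the compatibility of Theta series with spectral shifts.
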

To be precise, $\CR_{K,N}(z)$ was proved in \cite[Theorem 4.6]{Z2} to be a module isomorphism from completed $K_{-z} \otimes N$ to completed $N\otimes K_{-z}$ on the same underlying space $N^z$. This is equivalent to the above theorem by \cite[Proposition 6.2]{Z2}, which is a special case of Proposition \ref{prop: coproduct spectral} below. 

We are interested in the case where $N$ is the regular representation of $Y_{\nu}(\Glie)$ whose root grading is the weight grading. In this case, $Y_{\nu}(\Glie)^z$ is the sum of $Y_{\nu}(\Glie)_{\beta}((z^{-1}))$ over $\beta \in \BQ$. It is a subalgebra of $Y_{\nu}(\Glie)((z^{-1}))$ that contains $Y_{\nu}(\Glie)[z]$. As a consequence of $\CR_{K,Y_{\nu}(\Glie)}(z)$ being a module isomorphism, we have the following equations of linear endomorphisms of $Y_{\nu}(\Glie)^z$; see also \cite[(4.14) \& comments above Definition 4.3]{Z2}:
\begin{equation}  \label{equ: conjugation T Yangian} 
  \begin{split}
     \CR_{K,Y_{\nu}(\Glie)}(z) \circ \xi_{i,s} \circ \CR_{K,Y_{\nu}(\Glie)}(z)^{-1} = \xi_{i,s}, \\
    \CR_{K,Y_{\nu}(\Glie)}(z) \circ x_{i,n}^- \circ \CR_{K,Y_{\nu}(\Glie)}(z)^{-1}  = \mathbf{p}_i(z+\sigma_i^-) (x_{i,n}^-),  \\
     \CR_{K,Y_{\nu}(\Glie)}(z) \circ x_{i,n}^+ \circ \CR_{K,Y_{\nu}(\Glie)}(z)^{-1} = \frac{1}{\mathbf{p}_i(z+\sigma_i^+)}(x_{i,n}^+).
    \end{split}
\end{equation}
Here, an element $x$ of the algebra $Y_{\nu}(\Glie)^z$  is identified with the linear endomorphism of $Y_{\nu}(\Glie)^z$ defined by left multiplication $y \mapsto x y$. Let us explain that the three terms at the right-hand side belong to $Y_{\nu}(\Glie)^z$. The first term is clear. As in \cite[\S 2.6]{GTL0}, define $\sigma_i^{\pm}$ to be algebra endomorphisms of $Y_{\nu}^{\pm}(\Glie)$ sending $x_{j,n}^{\pm}$ to $x_{j,n+\delta_{ij}}^{\pm}$. View $\mathbf{p}_i(z+\sigma_i^-)$ as an element of $\BC[\sigma_i^-][z]$ that maps $Y_{\nu}^-(\Glie)$ to $Y_{\nu}^-(\Glie)[z]$, then the second term is in $Y_{\nu}(\Glie)[z]$. Similarly, view $\frac{1}{\mathbf{p}_i(z+\sigma_i^+)}$ as an element of $\BC[\sigma_i^+][[z^{-1}]]$, then the third term is in $Y_{\nu}(\Glie)_{\alpha_i}[[z^{-1}]]$. As a consequence, the conjugation map $x \mapsto \CR_{K,Y_{\mu}(\Glie)}(z)^{-1} \circ x \circ \CR_{K,Y_{\mu}(\Glie)}(z)$ restricts to a homomorphism of $\BC[z]$-algebras
\begin{equation}  \label{equ: psi-}
    \mathrm{C}_{K,z}^-:  Y_{\mu}^-(\Glie)[z] \longrightarrow Y_{\mu}^-(\Glie)((z^{-1})), \quad x_{i,n}^- \mapsto \frac{1}{\mathbf{p}_i(z+\sigma_i^-)}(x_{i,n}^-).
\end{equation}
Similarly, the conjugation map $x \mapsto \CR_{K,Y_{\mu}(\Glie)}(z) \circ x \circ \CR_{K,Y_{\mu}(\Glie)}(z)^{-1}$ restricts to a homomorphism of $\BC[z]$-algebras
\begin{equation}  \label{equ: psi+}
    \mathrm{C}_{K,z}^+:  Y_{\mu}^+(\Glie)[z] \longrightarrow Y_{\mu}^+(\Glie)((z^{-1})), \quad x_{i,n}^+ \mapsto \frac{1}{\mathbf{p}_i(z+\sigma_i^+)}(x_{i,n}^+). 
\end{equation}

\section{Compatibility with spectral parameter automorphisms}  \label{sec: spectral parameter coproduct}
It is well-known that the spectral parameter automorphisms $\tau_a$, for $a\in \BC$, are Hopf algebra automorphisms of the ordinary Yangian. In this section, we prove a similar statement for shifted Yangians. 

\begin{prop}  \label{prop: coproduct spectral}
    For $\mu$ and $\nu$ two coweights and $a \in \BC$, we have 
    $$ (\tau_a \otimes \tau_a) \circ \Delta_{\mu,\nu} = \Delta_{\mu,\nu} \circ \tau_a : Y_{\mu+\nu}(\Glie) \longrightarrow Y_{\mu}(\Glie) \otimes Y_{\nu}(\Glie).$$
\end{prop}
\begin{proof}
Fix $a\in \BC$ and define the algebra homomorphism $\Delta_{\mu,\nu}' := (\tau_a \otimes \tau_a)\circ\Delta_{\mu,\nu} \circ \tau_a^{-1}$ from $Y_{\mu+\nu}(\Glie)$ to $Y_{\mu}(\Glie) \otimes Y_{\nu}(\Glie)$. It suffices to show that the family $(\Delta_{\mu,\nu}')_{\mu,\nu\in \BP^{\vee}}$ satisfies all the properties of Theorem \ref{thm: coproduct shifted Yangian}. 

    Part (4) is almost trivial. Part (1) follows from the well-known fact that $\tau_a$ is a Hopf algebra automorphism of the ordinary Yangian $Y_0(\Glie)$. 

    For Part (2), assume $0\leq m < -\langle \nu, \alpha_i\rangle$. Since $\tau_a^{-1}(x_{i,m}^+)$ is a linear combination of $x_{i,k}^+$ for $0\leq k \leq m$, we have 
    $$ \Delta_{\mu,\nu} (\tau_a^{-1}(x_{i,m}^+)) = \tau_a^{-1}(x_{i,m}^+) \otimes 1, \quad \Delta_{\mu,\nu}'(x_{i,m}^+) = x_{i,m}^+ \otimes 1. $$
    The case of $x_{i,n}^-$ is similar.

    For Part (3), we deform the shift homomorphism $\iota_{\epsilon,\eta}^{\mu}$ of Eq.\eqref{def: shift homo}, for $\epsilon$ and $\eta$ anti-dominant, to another algebra homomorphism $\jmath_{\epsilon,\eta}^{\mu} := \tau_a^{-1} \circ \iota_{\epsilon,\eta}^{\mu} \circ \tau_a$ from $Y_{\mu}(\Glie)$ to $Y_{\mu+\epsilon+\eta}(\Glie)$. Then we are reduced to an equation of algebra homomorphisms
    $$ (\sharp)_{\mu,\nu}:\qquad (\jmath_{\epsilon,0}^{\mu} \otimes \jmath_{0,\eta}^{\nu}) \circ \Delta_{\mu,\nu} = \Delta_{\mu+\epsilon,\nu+\eta} \circ \jmath_{\epsilon,\eta}^{\mu+\nu}.  $$

\noindent {\it Claim 1.} For $\epsilon,\epsilon',\eta,\eta'$ anti-dominant coweights, we have 
$$ \jmath_{\epsilon',\eta'}^{\mu+\epsilon+\eta} \circ  \iota^{\mu}_{\epsilon,\eta} = \iota_{\epsilon,\eta}^{\mu+\epsilon'+\eta'} \circ \jmath_{\epsilon',\eta'}^{\mu}: Y_{\mu}(\Glie) \longrightarrow Y_{\mu+\epsilon+\epsilon'+\eta+\eta'}(\Glie). $$

 \begin{proof}
    We check the images of the generating series $x_i^+(u)$, for $i \in I$, of $Y_{\mu}(\Glie)$ under the two algebra homomorphisms at both sides; the cases $x_i^-(u)$ and $\xi_i(u)$ are similar. Set $s = -\langle \epsilon, \alpha_i\rangle$ and $t = - \langle \epsilon',\alpha_i\rangle$. Then $s$ and $t$ are non-negative integers and
    \begin{align*}
        \textrm{left-hand side: } \quad & x_i^+(u)  \mapsto \langle u^s x_i^+(u)\rangle_+ \mapsto \langle u^s \langle (u-a)^t x_i^+(u) \rangle_+   \rangle_+; \\
        \textrm{right-hand side: } \quad & x_i^+(u) \mapsto \langle (u-a)^t x_i^+(z) \rangle_+ \mapsto \langle (u-a)^t \langle z^s x_i^+(u) \rangle_+ \rangle_+.
    \end{align*}
    Here we used the fact that $\langle f\rangle_+^u = \langle f\rangle_+^{u-a}$ for $f$ a vector-valued formal Laurent series in $u$, viewed also as a formal Laurent series in $u-a$.  
    Conclude by the identity $\langle P\langle Q f\rangle_+\rangle_+ = \langle PQ f\rangle_+$ for $P, Q \in \BC[u]$.
 \end{proof}   

\noindent {\it Claim 2.} For $\epsilon$ and $\eta$ anti-dominant coweights and $b \in \BC$, we have
$$ (\tau_b \otimes \tau_b) \circ \Delta_{\epsilon,\eta} = \Delta_{\epsilon,\eta} \circ \tau_b: Y_{\epsilon+\eta}(\Glie) \longrightarrow Y_{\epsilon}(\Glie) \otimes Y_{\eta}(\Glie). $$
\begin{proof}
   For $i \in I$, set $s_i = -\langle \epsilon, \alpha_i\rangle$ and $t_i = - \langle \eta, \alpha_i\rangle$; both are non-negative integers.
    The shifted Yangian $Y_{\epsilon+\eta}(\Glie)$ as an algebra is generated by $x_{i,0}^{\pm}$ and $\xi_{i,s_i+t_i+1}$. We compute the images of these generators under the two algebra homomorphisms based on the explicit coproduct formulas in the anti-dominant case \cite[\S 4]{coproduct}. 
    
    For $x_{i,0}^+$, the two images are equal to
     $x_{i,0}^+ \otimes 1 + \delta_{0,s_i} \otimes x_{i,0}^+$. Similarly, for $x_{i,0}^-$, both images are equal to $1\otimes x_{i,0}^- + x_{i,0}^- \otimes \delta_{0,t_i}$.   
     
     For $\xi_{i,s_i+t_i}$, under the identifications $Y_{\mu}^{\pm}(\Glie) = Y_0^{\pm}(\Glie)$, the left-hand side gives  
     \begin{align*}
         \xi_{i,s_i+t_i+1} &\mapsto 1\otimes \xi_{i,t_i+1} + \xi_{i,s_i+1} \otimes 1 + \xi_{i,s_i} \otimes \xi_{i,t_i} - \sum_{\beta \in \Phi} (\alpha_i,\beta) x_{\beta}^- \otimes x_{\beta}^+   \\
         &\mapsto 1 \otimes (\xi_{i,t_i+1} + (t_i+1) b \xi_{i,t_i} + \frac{t_i(t_i+1)}{2} b^2 )  \\
         &\qquad + (\xi_{i,s_i+1} + (s_i+1)b \xi_{i,s_i} + \frac{s_i(s_i+1)}{2}b^2) \otimes 1 \\
         & \qquad + (\xi_{i,s_i}+s_i b) \otimes (\xi_{i,t_i} + t_i b) - \sum_{\beta\in \Phi} (\alpha_i,\beta) x_{\beta}^- \otimes x_{\beta}^+.
     \end{align*} 
     The right-hand side gives
     \begin{align*}
         \xi_{i,s_i+t_i+1} &\mapsto \xi_{i,s_i+t_i+1} + (s_i+t_i+1)b \xi_{i,s_i+t_i} + \frac{(s_i+t_i)(s_i+t_i+1)}{2}b^2  \\
         &\mapsto 1\otimes \xi_{i,t_i+1} + \xi_{i,s_i+1} \otimes 1 + \xi_{i,s_i} \otimes \xi_{i,t_i} - \sum_{\beta \in \Phi} (\alpha_i,\beta) x_{\beta}^- \otimes x_{\beta}^+ \\
         &\qquad + (s_i+t_i+1) b (\xi_{i,s_i} \otimes 1 + 1 \otimes \xi_{i,t_i}) + \frac{(s_i+t_i)(s_i+t_i+1)}{2}b^2 \otimes 1.
     \end{align*}
    Both sides coincide. 
\end{proof}
    
    Let $\epsilon'$ and $\eta'$ be anti-dominant coweights such that both $\mu+\epsilon'$ and $\nu+\eta'$ are anti-dominant. Applying the injective map $\iota_{\epsilon',0}^{\mu} \otimes \iota_{0,\eta'}^{\nu}$ to $(\sharp)_{\mu,\nu}$ and making use of Claim 1      we see that $(\sharp)_{\mu,\nu}$ is equivalent to the following equation 
    $$ (\sharp)_{\mu+\epsilon',\nu+\eta'}:\qquad (\jmath_{\epsilon,0}^{\mu+\epsilon'} \otimes \jmath_{0,\eta}^{\nu+\eta'}) \circ \Delta_{\mu+\epsilon',\nu+\eta'} = \Delta_{\mu+\epsilon'+\epsilon,\nu+\eta'+\eta} \circ \jmath_{\epsilon,\eta}^{\mu+\epsilon+\nu+\eta}. $$
    Rewriting the deformed shift homomorphisms $\jmath$ in terms of the shift homomorphisms $\iota$ and using Claim 2 to move $\tau_a \otimes \tau_a$ to the left and $\tau_a^{-1} = \tau_{-a}$ at the right, we are reduced to the known equation of Theorem \ref{thm: coproduct shifted Yangian}(3).
\end{proof}

The spectral parameter automorphism $\tau_a$ of a non-zero shifted $Y_{\mu}(\Glie)$ does not send an S-series to another S-series. Nevertheless, it is compatible with Theta series.

\begin{cor}
    The Theta series of Theorem \ref{thm: Theta} satisfy the following compatibility with spectral parameter automorphisms: 
    $$ (\tau_a\otimes \tau_a)(\Theta_{K,\beta}^{\mu,\nu}(z)) = \Theta_{K,\beta}^{\mu,\nu}(z-a) \in Y_{\mu}^-(\Glie)_{-\beta} \otimes Y_{\nu}^+(\Glie)_{\beta}[z] \quad \textrm{for $a \in \BC$}. $$
    As a consequence, $\Theta_K(z)$ is obtained from $\Theta_K(0)$ by applying $\tau_{-z}\otimes \tau_{-z}$.
\end{cor}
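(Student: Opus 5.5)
The plan is to use the uniqueness in Theorem \ref{thm: Theta}(1). Set $\Theta'(z) := (\tau_a\otimes\tau_a)(\Theta_K^{\mu,\nu}(z+a))$. We will show that $\Theta'(z)$ satisfies the same defining properties as $\Theta_K^{\mu,\nu}(z)$, hence $\Theta'(z)=\Theta_K^{\mu,\nu}(z)$. Substituting $z\mapsto z-a$ then gives the stated identity.

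First, the form of $\Theta'$. Each $\tau_a$ preserves the weight grading and the subalgebras $Y^{\pm}$. By Theorem \ref{thm: Theta}(2), each component $\Theta_{K,\beta}(z+a)$ is a polynomial in $z$ with coefficients in $Y_{\mu}^-(\Glie)_{-\beta}\otimes Y_{\nu}^+(\Glie)_{\beta}$. So the same holds for $\Theta'_\beta(z)$, and $\Theta'_0=1\otimes 1$. In particular $\Theta'(z)$ is a unitriangular element of $Y_\mu(\Glie)\otimes_z Y_\nu(\Glie)$, and the shift $z\mapsto z+a$ causes no convergence issues.

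Second, the intertwining property. The key identity is
$$(\tau_a\otimes\rho_{z+a}\otimes\tau_a)\circ\Delta_{(\mu\epsilon)\nu} = (\mathrm{Id}\otimes\rho_z\otimes\mathrm{Id})\circ\Delta_{(\mu\epsilon)\nu}\circ\tau_a,$$
together with the same identity for $\Delta_{\mu(\epsilon\nu)}$. To prove it, note that $\rho_{z+a}=\rho_z\circ\tau_a$, since $\tau_z\circ\tau_a=\tau_{z+a}$ as homomorphisms into $Y_\epsilon(\Glie)[z]$. This holds by the generating series formula $\tau_z(X(u))=X(u-z)$. Then apply Proposition \ref{prop: coproduct spectral} twice: first to $\Delta_{\mu+\epsilon,\nu}$, giving $(\tau_a\otimes\tau_a)\circ\Delta_{\mu+\epsilon,\nu}=\Delta_{\mu+\epsilon,\nu}\circ\tau_a$, and then to $\Delta_{\mu,\epsilon}$. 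Together these show $(\tau_a\otimes\tau_a\otimes\tau_a)\circ\Delta_{(\mu\epsilon)\nu}=\Delta_{(\mu\epsilon)\nu}\circ\tau_a$.

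Now take the intertwining equation of Theorem \ref{thm: Theta}(1) with $z$ replaced by $z+a$. This is legitimate because the Theta components are polynomial in $z$, and the $\Delta$-images are polynomial in $z$ as well. Apply $\tau_a\otimes\tau_a$, extended $\BC[z]$-linearly and to the $z$-completion; it is an algebra automorphism there because it preserves weights. Using the key identity with $x$ replaced by $\tau_a^{-1}(x)$, we obtain
$$\Theta'(z)\,(\mathrm{Id}\otimes\rho_z\otimes\mathrm{Id})\Delta_{(\mu\epsilon)\nu}(x)=(\mathrm{Id}\otimes\rho_z\otimes\mathrm{Id})\Delta_{\mu(\epsilon\nu)}(x)\,\Theta'(z)$$
for all $x\in Y_{\mu+\epsilon+\nu}(\Glie)$. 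Uniqueness now gives $\Theta'=\Theta_K^{\mu,\nu}$. The consequence follows by taking $a=-z$ and evaluating at the point $0$, i.e. $(\tau_{-z}\otimes\tau_{-z})(\Theta_K(0))=\Theta_K(z)$, which is legitimate by polynomiality.

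The main obstacle is the careful bookkeeping that the substitution $z\mapsto z+a$ and the action of $\tau_a\otimes\tau_a$ make sense in the $z$-completed tensor product. The substitution is harmless since each weight component is a polynomial. For $\tau_a$, the issue is that it does not preserve the filtration by the index $n$, only the weight. Since $\otimes_z$ is built weight-componentwise from Laurent series in $z^{-1}$, and $\tau_a$ acts on the coefficients with no dependence on $z$, this is fine.
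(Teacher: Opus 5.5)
Your proposal is correct and follows the paper's argument. Both apply $\tau_a\otimes\tau_a$ to the intertwining equation, commute it past the iterated coproducts via Proposition \ref{prop: coproduct spectral} (picking up the shift $\rho_z\circ\tau_a=\rho_{z+a}$), and conclude by the uniqueness in Theorem \ref{thm: Theta}(1). Your reparametrization $(\tau_a\otimes\tau_a)(\Theta_K(z+a))$ makes the appeal to uniqueness, which is stated for the formal parameter $z$, literal, where the paper leaves this step implicit.
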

\begin{proof}
    It suffices to check that the left-hand side solves the intertwining property of $\Theta_K(z-a)$ in Theorem \ref{thm: Theta}(1). By Proposition \ref{prop: coproduct spectral}, we have:
    $$ (\tau_a \otimes \tau_a) \circ (\mathrm{Id} \otimes \rho_z \otimes \mathrm{Id}) \circ \Delta_{(\mu\epsilon)\nu} = (\mathrm{Id}\otimes \rho_{z-a} \otimes \mathrm{Id}) \circ \Delta_{(\mu\epsilon)\nu} \circ \tau_a.  $$
    Similar relation holds for $\Delta_{\mu(\epsilon\nu)}$. By applying $\tau_a \otimes \tau_a$ to the intertwining property of $\Theta_K(z)$, we get the desired intertwining property of $\Theta_K(z-a)$.
\end{proof}

\section{The unitriangular R-matrix for shifted Yangians}  \label{sec: uni R shifted Yangian}
The ordinary Yangian possesses a universal R-matrix $\CR(z)$ that admits a Gauss decomposition, due to Gautam--Toledano Laredo--Wendlandt \cite{GTLW}, into a product of three power series of constant term $1\otimes 1$:
$$ \CR(z) = \CR^+(z) \times \CR^0(z) \times \CR^-(z) \in Y(\Glie)^{\otimes 2}[[z^{-1}]].  $$
The abelian part $\CR^0(z)$ has coefficients in the commutative subalgebra $Y^0(\Glie) \otimes Y^0(\Glie)$, one of the unitriangular parts $\CR^-(z)$ has coefficients in $Y^-(\Glie) \otimes Y^+(\Glie)$, and the other unitriangular part $\CR^+(z)$ is simply $\CR^-(-z)^{-1}_{21}$ where $21$ denotes the flip of the two tensor factors. For this reason, we are concentrated on $\CR^-(z)$, and call it {\it the} unitriangular R-matrix of the Yangian.

In this section, building on the construction of \cite{GTLW}, we extend the unitriangular R-matrix $\CR^-(z)$ to shifted Yangians and prove a uniqueness statement for the evaluation of $\CR^-(z)$ at the tensor product of the regular representation of a shifted Yangian with an arbitrary representation of another shifted Yangian.

\begin{theorem}\cite[Theorem 4.1]{GTLW}   \label{thm: R- GTLW}
    There exists a unique power series in $z^{-1}$ with coefficients in $Y^-(\Glie) \otimes Y^+(\Glie)$ of constant term $1\otimes 1$, which is unitriangular:
    \begin{gather*}
        \CR^-(z) = \sum_{\beta \in \BQ_+} \CR_{\beta}^-(z) \quad \textrm{such that} \\
        \CR_{\beta}^-(z) \in (Y^-(\Glie)_{-\beta}\otimes Y^+(\Glie)_{\beta})[[z^{-1}]] \quad \textrm{for}\quad \beta \in \BQ_+ \quad \textrm{and}\quad \CR_0^-(z) = 1\otimes 1
    \end{gather*}
    and which satisfies the intertwining equations in $Y(\Glie)^{\otimes 2}((z^{-1}))[[u^{-1}]]$ for $i \in I$:
    \begin{gather*}
    \CR^-(z) \times (\tau_z \otimes \mathrm{Id})\circ \Delta(\xi_i(u)) = (\xi_i(u-z)\otimes \xi_i(u)) \times \CR^-(z), \\
    \CR^-(-z) \times (\mathrm{Id} \otimes \tau_z)\circ \Delta(\xi_i(u)) = (\xi_i(u)\otimes \xi_i(u-z)) \times \CR^-(-z).
    \end{gather*}
\end{theorem}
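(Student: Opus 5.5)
The statement is the Gauss-decomposition theorem of Gautam--Toledano Laredo--Wendlandt, so I will reconstruct it from their construction rather than derive it from the earlier results of this paper. My plan is to treat the first intertwining equation as a recursive system on weight components, prove uniqueness by a triangularity argument, obtain existence from the GTLW factorization of the universal R-matrix, and finally derive the second equation from the first.

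For uniqueness I would expand both sides of the first intertwining equation into $\BQ_+$-weight components. Write $(\tau_z\otimes\mathrm{Id})\Delta(\xi_i(u))$ modulo higher terms using the coproduct estimation of Lemma \ref{lem: Yangian coproduct estimation} in the zero-shifted case. The leading term is $\xi_i(u-z)\otimes\xi_i(u)$, and the correction terms lie in $\sum_{\beta\neq0}Y^{\leq}_{-\beta}\otimes Y^{\geq}_{\beta}$.

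Comparing $\beta$-components gives an equation of the form
$$\mathrm{ad}(\xi_i(u-z)\otimes\xi_i(u))\bigl(\CR^-_\beta(z)\bigr)=(\text{terms built from }\CR^-_\gamma(z),\ \gamma<\beta).$$
Here $\mathrm{ad}(X)(Y)$ stands for $XY-YX$ after suitable normalization. Passing to the logarithms of $\xi_i$, the operator on the left acts on $Y^-_{-\beta}\otimes Y^+_\beta$ through the commutators $[\log\xi_i(u-z)\otimes1+1\otimes\log\xi_i(u),\,\cdot\,]$. To leading order in $u^{-1}$ it is $u^{-1}(\alpha_i,\beta)$ times a unit, plus $z$-dependent corrections in $z/u$. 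Since $\beta\neq0$ is a nonzero element of $\BQ_+$, some $i$ has $(\alpha_i,\beta)\neq0$, so this operator is injective on $(Y^-_{-\beta}\otimes Y^+_\beta)[[z^{-1}]]$. Induction on the height of $\beta$ then gives uniqueness.

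For existence I would not solve the recursion directly, since its solvability is the real content. Instead I would invoke the GTLW construction. The universal R-matrix $\CR(z)$ intertwines $(\tau_z\otimes\mathrm{Id})\Delta$ with its opposite. Its abelian part $\CR^0(z)$ intertwines the Drinfeld formal coproduct, under which $\xi_i(u)$ is grouplike, so that $\Delta^{D}_z(\xi_i(u))=\xi_i(u-z)\otimes\xi_i(u)$. The lower factor $\CR^-(z)$ is then the element that twists $\Delta$ into the Drinfeld coproduct; GTLW construct it as the regularized conjugating element relating the two coproducts on $Y^{\geq}\otimes Y^{\leq}$.

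The main obstacle is exactly this existence step. It requires showing that the recursive equations are consistent and that the resulting series have coefficients in $Y^-\otimes Y^+$ with only nonpositive powers of $z$. I would defer to \cite{GTLW} for it, or alternatively establish it by a Yangian degeneration of the Khoroshkin--Tolstoy/EKP statement used in Proposition \ref{prop: unitriangular R qaf}(2).

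The second intertwining equation should follow from the first. Substitute $z\mapsto -z$ in the first equation and apply $\tau_{z}\otimes\tau_{z}$, using that $\tau_z$ is a Hopf automorphism. This gives
$$\CR^-(-z)\times(\mathrm{Id}\otimes\tau_z)\Delta(\xi_i(u))=(\xi_i(u)\otimes\xi_i(u-z))\times\CR^-(-z),$$
with $(\tau_z\otimes\tau_z)(\CR^-(-z))=\CR^-(-z)$. That last identity itself follows from uniqueness applied to the transformed element.
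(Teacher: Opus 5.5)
The paper gives no proof of its own here: it quotes \cite[Theorem 4.1]{GTLW} for both existence and uniqueness. Deferring existence to GTLW therefore matches the paper. However, the two steps you argue yourself each contain a real flaw.

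\emph{Uniqueness.} The injectivity mechanism you describe is wrong. Every coefficient of $\CR^-_\beta(z)$ lies in $Y^-(\Glie)_{-\beta}\otimes Y^+(\Glie)_{\beta}$, which has total weight zero. It therefore commutes with $\xi_{i,0}\otimes 1+1\otimes\xi_{i,0}$, the $u^{-1}$-coefficient of $\xi_i(u-z)\otimes\xi_i(u)$ and of its logarithm. So the leading-order operator is $0$, not $(\alpha_i,\beta)$ times a unit.

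The usable equation is the $u^{-2}$-coefficient. There $\tau_z(\xi_{i,1})=\xi_{i,1}+z\,\xi_{i,0}$ produces a term $z\,\xi_{i,0}\otimes 1$. Let $A(z)$ be the difference of two solutions, and take a weight $\beta$ all of whose lower-height components already vanish. Then one gets
\begin{equation*}
[A_\beta(z),\ \xi_{i,1}\otimes 1+1\otimes \xi_{i,1}+\xi_{i,0}\otimes\xi_{i,0}]+(\alpha_i,\beta)\,z\,A_\beta(z)=0 .
\end{equation*}
Choose $i$ with $(\alpha_i,\beta)\neq 0$ and compare the highest power of $z$: this kills the top coefficient of $A_\beta(z)$. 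A descending induction on the power of $z$ then gives $A_\beta=0$; this uses that $A_\beta(z)$ is bounded above in $z$. This is the argument of Proposition \ref{prop: stable uniqueness}. As you wrote it, the injectivity claim is not justified.

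\emph{Second equation.} You cannot apply $\tau_z\otimes\tau_z$, with the same variable $z$, directly to $\CR^-(-z)$. It sends the coefficient of $z^{-s}$ to a polynomial in $z$ whose degree is in general unbounded in $s$, so a given power of $z$ may receive infinitely many contributions. The paper flags exactly this issue after Proposition \ref{prop: R- intertwining}. The expression $(\tau_z\otimes\tau_z)(\CR^-(-z))$ only acquires a meaning once you know each coefficient is translation invariant. That is the very identity you want to deduce, so invoking uniqueness on ``the transformed element'' is circular as stated.

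Uniqueness does legitimately give $(\tau_a\otimes\tau_a)(\CR^-(z))=\CR^-(z)$ for $a\in\BC$: the operation is defined coefficientwise, and you substitute $u\mapsto u+a$. But you do not need translation invariance at all. The Hopf property gives
\begin{equation*}
(\mathrm{Id}\otimes\tau_z)\Delta(\xi_i(u))=(\tau_{-z}\otimes\mathrm{Id})\Delta(\xi_i(u-z)).
\end{equation*}
So the second equation is the first one with $z\mapsto -z$, followed by the substitution $u\mapsto u-z$. That substitution is a well-defined ring endomorphism of $Y(\Glie)^{\otimes 2}((z^{-1}))((u^{-1}))$, because each power of $u^{-1}$ receives only finitely many terms. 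It also leaves $\CR^-(-z)$ untouched, since $\CR^-(-z)$ does not involve $u$.
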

Let $\mu$ and $\nu$ be two coweights. Under the natural identifications of the subalgebras $Y_{\mu}^{\pm}(\Glie)$ of the shifted Yangian with the subalgebras $Y^{\pm}(\Glie)$ of the ordinary Yangian, we view the power series $\CR^-(z)$ of the above theorem in
$$ \CR^-(z) \in (Y_{\mu}^-(\Glie) \otimes Y_{\nu}^+(\Glie))[[z^{-1}]] $$
with the same unitriangularity property. 
\begin{prop}  \label{prop: R- intertwining}
    Given two coweights $\mu$ and $\nu$, we have the following intertwining equations in $(Y_{\mu}^{\leq}(\Glie)\otimes Y_{\nu}^{\geq}(\Glie))((z^{-1}))((u^{-1}))$ for $i \in I$:
    \begin{gather*}
        \CR^-(z) \times (\tau_z \otimes \mathrm{Id})\circ \Delta_{\mu,\nu}(\xi_i(u)) = (\xi_i(u-z)\otimes \xi_i(u)) \times \CR^-(z), \\
        \CR^-(-z) \times (\mathrm{Id} \otimes \tau_z)\circ \Delta_{\mu,\nu}(\xi_i(u)) = (\xi_i(u) \otimes \xi_i(u-z)) \times \CR^-(-z).
    \end{gather*}
\end{prop}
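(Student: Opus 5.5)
The plan is to reduce to the unshifted case of Theorem \ref{thm: R- GTLW} through shift homomorphisms, using Theorem \ref{thm: coproduct shifted Yangian}(3) and Proposition \ref{prop: coproduct spectral}. Both equations are handled the same way, so we treat only the first.

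\textbf{Step 1: the antidominant case $\mu,\nu \to 0$ direction.} Take antidominant coweights $\epsilon,\eta$ and set $\mu = \epsilon$, $\nu = \eta$. We view the zero-shifted identity of Theorem \ref{thm: R- GTLW} as an identity in $Y(\Glie)^{\otimes 2}((z^{-1}))[[u^{-1}]]$ and apply the injective map $\iota_{\epsilon,0}^0 \otimes \iota_{0,\eta}^0$. On $Y^-\otimes Y^+$ this map is $x^-_{i,n}\otimes x^+_{j,m} \mapsto x^-_{i,n}\otimes x^+_{j,m}$, because $\iota_{\epsilon,0}$ does not shift $x^-$ and $\iota_{0,\eta}$ does not shift $x^+$. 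Hence $\CR^-(z)$ is sent to itself under the identifications $Y^\pm_\mu = Y^\pm_0$.

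By Theorem \ref{thm: coproduct shifted Yangian}(3), $(\iota_{\epsilon,0}\otimes\iota_{0,\eta})\circ\Delta = \Delta_{\epsilon,\eta}\circ \iota_{\epsilon,\eta}$. Moreover, $\iota_{\epsilon,\eta}$ preserves $\overline\xi_i(u)$, so $\iota_{\epsilon,\eta}(\xi_i(u)) = u^{-\langle\epsilon+\eta,\alpha_i\rangle}\xi_i(u)$. The factor $\xi_i(u-z)\otimes\xi_i(u)$ maps to $(u-z)^{-\langle\epsilon,\alpha_i\rangle}u^{-\langle\eta,\alpha_i\rangle}\,\xi_i(u-z)\otimes\xi_i(u)$.

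We still have to commute $\iota_{\epsilon,0}$ with $\tau_z$. The map $\tau_z\circ\iota\circ\tau_z^{-1}$ differs from $\iota$: it is the deformed map $\jmath$ of Proposition \ref{prop: coproduct spectral}. For $\xi$-series, however, a direct computation gives $\tau_z\iota_{\epsilon,0}(\xi_i(u)) = (u-z)^{s}\xi_i(u-z) = \iota_{\epsilon,0}\tau_z(\xi_i(u))\cdot(u-z)^s/u^s$, with $s = -\langle\epsilon,\alpha_i\rangle$. This is a scalar discrepancy, which is harmless in a multiplicative identity.

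The cleaner route is to avoid commuting $\tau_z$ past $\iota$ on generators. Instead, write $(\tau_z\otimes\mathrm{Id})\circ\Delta_{\epsilon,\eta}(\xi_i(u))$ and compare it with $(\iota\otimes\iota)(\tau_z\otimes \mathrm{Id})\Delta(\xi_i(u))$ multiplied by the scalar $(u-z)^{s}u^{t}$, where $t=-\langle\eta,\alpha_i\rangle$. Both sides of the resulting identity then carry the same scalar factor, which cancels, and this proves the claim for antidominant $\mu,\nu$.

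\textbf{Step 2: general $\mu,\nu$.} Choose antidominant $\epsilon,\eta$ with $\mu+\epsilon$ and $\nu+\eta$ antidominant. Apply the injective map $\iota_{\epsilon,0}^\mu\otimes\iota_{0,\eta}^\nu$ to the desired identity for $(\mu,\nu)$. As in Step 1, $\CR^-(z)$ is fixed by this map. Compatibility (3) of Theorem \ref{thm: coproduct shifted Yangian}, together with the same scalar bookkeeping for $\tau_z$ on $\xi$-series, turns the desired identity into the identity for $(\mu+\epsilon,\nu+\eta)$ multiplied on both sides by a common invertible scalar Laurent series. That identity holds by Step 1. Injectivity of $\iota\otimes\iota$, which persists after tensoring with $\BC((z^{-1}))((u^{-1}))$, yields the identity for $(\mu,\nu)$.

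The main obstacle is Step 1's interaction between $\tau_z$ and the shift homomorphisms on $\xi_i(u)$. We must check that the discrepancy really is only a scalar multiplication, via $\overline\xi_i$ and the identity $\tau_z(u^{-m}\cdot)$ versus $(u-z)^{-m}$, and that it appears identically on both sides of the equation. The second equation follows the same way, or by applying the flip and $z\mapsto -z$ symmetry.
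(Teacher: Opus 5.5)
Your skeleton matches the paper's zigzag: the antidominant case from the unshifted identity, then general $(\mu,\nu)$ by injectivity of $\iota^\mu_{\epsilon,0}\otimes\iota^\nu_{0,\eta}$. You are right that this map fixes $\CR^-(z)$ and sends $\xi_i(u-z)\otimes\xi_i(u)$ to $(u-z)^s u^t\,\xi_i(u-z)\otimes\xi_i(u)$, where $s=-\langle\epsilon,\alpha_i\rangle$ and $t=-\langle\eta,\alpha_i\rangle$. The gap is on the other side. Everything hinges on
\[
(\iota^{\mu}_{\epsilon,0}\otimes\iota^{\nu}_{0,\eta})\circ(\tau_z\otimes\mathrm{Id})\circ\Delta_{\mu,\nu}(\xi_i(u)) = (u-z)^s u^t\,(\tau_z\otimes\mathrm{Id})\circ\Delta_{\mu+\epsilon,\nu+\eta}(\xi_i(u)),
\]
which you assert but do not prove. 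Your only supporting computation compares $\iota\circ\tau_z$ with $\tau_z\circ\iota$ on the series $\xi_i(u)$, and it has the two swapped: in fact $\tau_z\iota(\xi_i(u))=u^s\xi_i(u-z)$ while $\iota\tau_z(\xi_i(u))=(u-z)^s\xi_i(u-z)$. More importantly, this cannot suffice. The maps $\iota\circ\tau_z$ and $\tau_z\circ\iota$ are different algebra homomorphisms on $Y^{\le}_\mu$; they already differ on each $\xi_{i,p}$. Moreover, $\Delta_{\mu,\nu}(\xi_i(u))$ is not $\xi_i(u)\otimes\xi_i(u)$: it contains non-Cartan terms such as $x^-_\gamma\otimes x^+_\gamma$, on which both composites act trivially. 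So a scalar relation on $\xi_i(u)$ does not transfer to $\Delta_{\mu,\nu}(\xi_i(u))$, and a check ``via $\overline{\xi}_i$'' leaves the main step open.

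The identity is nevertheless true. What is missing is Proposition~\ref{prop: coproduct spectral} applied at the level of $\Delta$; you cite it in your plan but never use it.

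\begin{itemize}
\item For $a\in\BC$ put $\jmath_a:=\tau_{-a}\circ\iota^\mu_{\epsilon,0}\circ\tau_a$. Proposition~\ref{prop: coproduct spectral} and Theorem~\ref{thm: coproduct shifted Yangian}(3) give $(\jmath_a\otimes\mathrm{Id})\circ\Delta_{\mu,\nu}=\Delta_{\mu+\epsilon,\nu}\circ\jmath_a$. This is the case $\eta=0$ of $(\sharp)$ in that proof.
\item One checks directly that $\jmath_a(\xi_i(u))=(u-a)^s\xi_i(u)$.
\item Write $\iota\circ\tau_a=\tau_a\circ\jmath_a$. Apply $\mathrm{Id}\otimes\iota^\nu_{0,\eta}$ together with Theorem~\ref{thm: coproduct shifted Yangian}(3) once more, and finally $\tau_a\otimes\mathrm{Id}$. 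This gives the display at $z=a$, hence for formal $z$ by polynomiality.
\end{itemize}

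With this inserted, your argument is complete. The second equation goes the same way with scalar $u^s(u-z)^t$. The ``flip and $z\mapsto -z$'' shortcut is not available, because the flip leaves $Y^-\otimes Y^+$, and translation invariance would require applying $\tau_z\otimes\tau_z$ to a series in $z^{-1}$.

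The paper takes a different route. It uses the deformed map $\tau_z\circ\iota^\mu_{\epsilon,0}\circ\tau_{-z}$, which commutes with $\tau_z$ exactly, so only Theorem~\ref{thm: coproduct shifted Yangian}(3) is needed and the scalar is $u^{s+t}$ on both sides. That map still fixes $Y^-_\mu$, hence the coefficients of $\CR^-(z)$. But because it depends on $z$, the paper has to expand into the coefficient system $(\sharp)_{k,t}$. Your undeformed $\iota$ is $z$-independent, so applying it coefficientwise to $\CR^-(z)$ and using injectivity is immediate, at the price of needing Proposition~\ref{prop: coproduct spectral}.
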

\begin{proof}
We shall prove the first intertwining equation; the same idea works for the second equation. By Lemma \ref{lem: Yangian coproduct estimation} and Theorem \ref{thm: R- GTLW}, all the Laurent series have coefficients in the subalgebra $Y_{\mu}^{\leq}(\Glie)\otimes Y_{\nu}^{\geq}(\Glie)$.
    Given a complex number $a$ and an anti-dominant coweight $\epsilon$, consider the algebra homomorphism in the proof of Proposition \ref{prop: coproduct spectral}: $$ \tau_a \circ \iota_{\epsilon,0}^{\mu} \circ \tau_{-a}: Y_{\mu}(\Glie) \longrightarrow Y_{\mu+\epsilon}(\Glie) $$
    It acts on the generating series of $Y_{\mu}(\Glie)$ as follows:
    $$ x_i^-(u) \mapsto x_i^-(u),\quad x_i^+(u) \mapsto \langle (u+a)^{-\langle \epsilon,\alpha_i\rangle} x_i^+(u)\rangle_+,\quad \xi_i(u) \mapsto (u+a)^{-\langle \epsilon,\alpha_i\rangle} \xi_i(u). $$
    In particular, it is the evaluation at $z = a$ of the following algebra homomorphism
    $$ \iota_{\epsilon,0}^{\mu}(z): Y_{\mu}(\Glie) \longrightarrow Y_{\mu+\epsilon}(\Glie)[z] $$
    which is injective by the injectivity of $\iota_{\epsilon,0}^{\mu}$. 

    For $s \leq 0$, let $\CR_s^- \in Y_{\mu}^-(\Glie) \otimes Y_{\nu}^+(\Glie)$ denote the coefficient of $z^s$ in the power series $\CR^-(z)$; it is independent of $\mu$ and $\nu$. Fix from now on $i \in I$. Let us develop the second and third factors of the $i$th intertwining equation as Laurent series in $z$ and $u^{-1}$:
    $$ (\tau_z \otimes \mathrm{Id})\circ \Delta_{\mu,\nu}(\xi_i(u)) = \sum_{k} \sum_{n\geq 0} f_{k,n}^{\mu,\nu} z^n u^{-k-1},\quad \xi_i(u-z) \otimes \xi_i(u) = \sum_k \sum_{n\geq 0} g_{k,n}^{\mu,\nu}z^n u^{-k-1}. $$
    Then $k$ is bounded from below, and for each $k$, the coefficients $f_{k,n}^{\mu,\nu}$ and $g_{k,n}^{\mu,\nu}$ are nonzero for finitely many $n$. The $i$th intertwining equation is rewritten as
    $$   \sum_k u^{-k-1} \sum_{t} z^t \left(\sum_{s+n=t} \CR_s f_{k,n}^{\mu,\nu} \right) = \sum_k u^{-k-1} \sum_{t} z^t \left(\sum_{s+n=t} g_{k,n}^{\mu,\nu} \CR_s \right). $$
    For fixed $k$, the second summation over $t$ is bounded from above, the third summation is finite and belongs to the tensor product algebra $Y_{\mu}(\Glie) \otimes Y_{\nu}(\Glie)$. So the intertwining equation becomes the system of equations for $k, t \in \BZ$
    $$ (\sharp)^{\mu,\nu}_{k,t}: \quad \sum_{s+n=t} \CR_s \ f_{k,n}^{\mu,\nu} = \sum_{s+n=t} g_{k,n}^{\mu,\nu} \CR_s \in Y_{\mu}(\Glie)\otimes Y_{\nu}(\Glie). $$

    Given two anti-dominant coweights $\epsilon$ and $\eta$, we apply the injective algebra homomorphism $\iota_{\epsilon,0}^{\mu}(z) \otimes \iota_{0,\eta}^{\nu}: Y_{\mu}(\Glie) \otimes Y_{\nu}(\Glie) \longrightarrow Y_{\mu+\epsilon}(\Glie)\otimes Y_{\nu+\eta}(\Glie)[z]$ to this system. Let us determine the resulting equivalent system. Notice first that $\CR^-_s \in Y^-(\Glie) \otimes Y^+(\Glie)$ is fixed by this homomorphism. Next, to compute the images of the $f_{k,n}^{\mu,\nu}$ by this homomorphism, let us evaluate $z$ at an arbitrary complex number $a$:
    \begin{align*}
       &\quad (\tau_a \circ \iota_{\epsilon,0}^{\mu}\circ \tau_{-a} \otimes \iota_{0,\eta}^{\nu}) \circ (\tau_a \otimes \mathrm{Id}) \circ \Delta_{\mu,\nu}(\xi_i(u))  \\
       &= (\tau_a \otimes \mathrm{Id}) \circ (\iota_{\epsilon,0}^{\mu}\otimes \iota_{0,\eta}^{\nu}) \circ \Delta_{\mu,\nu}(\xi_i(u)) = (\tau_a \otimes \mathrm{Id}) \circ \Delta_{\mu+\epsilon,\nu+\eta} \circ \iota_{\epsilon,\eta}^{\mu+\nu}(\xi_i(u)) \\
       &= (\tau_a\otimes \mathrm{Id}) \circ \Delta_{\mu+\epsilon,\nu+\eta}(u^{-\langle \epsilon+\eta,\alpha_i\rangle} \xi_i(u)) = \sum_k \sum_{n\geq 0} f_{k-\langle \epsilon+\eta,\alpha_i\rangle,n}^{\mu+\epsilon,\nu+\eta} a^n u^{-k-1}.
    \end{align*}
   By polynomiality in $z$, we see that the image of $f_{k,n}^{\mu,\nu}$ under the map $\iota_{\epsilon,0}^{\mu}(z) \otimes \iota_{0,\eta}^{\eta}$ is the constant polynomial $f_{k-\langle \epsilon+\eta,\alpha_i\rangle,n}^{\mu+\epsilon,\nu+\eta}$. Similar computations can be done for $g_{k,n}^{\mu,\nu}$. As a consequence, we get an equivalence
   $$(\sharp)_{k,t}^{\mu,\nu}  \quad \Longleftrightarrow \quad  (\sharp)^{\mu+\epsilon,\nu+\eta}_{k-\langle \epsilon+\eta,\alpha_i\rangle, t} \quad \textrm{for $k,\, t \in \BZ$ and $\epsilon,\, \eta$ antidominant}. $$
   Since $(\sharp)_{k,t}^{0,0}$ holds true for all $k, t \in \BZ$ by Theorem \ref{thm: R- GTLW}, standard zigzag arguments as in the proof of \cite[Theorem 4.12]{coproduct} show that $(\sharp)_{k,t}^{\mu,\nu}$ holds true for all $k, t \in \BZ$ and all coweights $\mu, \nu \in \BP^{\vee}$.
\end{proof}
The idea of the proof is to apply the injective algebra homomorphism $\iota_{\epsilon,0}^{\mu}(z) \otimes \iota_{0,\eta}^{\nu}$ to the intertwining equations to perform zigzag arguments. This can be done for the second and third factors because they are polynomial in $z$, but not for $\CR^-(z)$ which is a power series in $z^{-1}$, so extra care is needed. One may deduce the second intertwining equation from the first equation by substituting $z\mapsto -z$, applying $\tau_{z}\otimes \tau_{z}$ and using the translation invariance \cite[Theorem 4.1(2)]{GTLW}. Again, a similar issue is that $\tau_z\otimes \tau_z$ cannot be applied directly to the power series $\CR^-(z)$. 

Given a representation $\varphi: Y_{\nu}^{\geq }(\Glie) \longrightarrow \mathrm{End}(N)$ of the subalgebra $Y_{\nu}^{\geq}(\Glie)$, let us make sense of the formal series $(\mathrm{Id}\otimes \varphi)(\CR^-(z))$. For $\beta \in \BQ$, we define
$$ \mathscr{E}_{N,\beta} := \{ f \in \mathrm{End} (N) \ |\ \varphi(\xi_{i,-\langle \nu,\alpha_i\rangle} ) \circ f - f \circ \varphi(\xi_{i,-\langle \nu,\alpha_i\rangle} ) = (\beta,\alpha_i)f \quad \textrm{for $i \in I$} \}. $$
Clearly, the map $\varphi$ sends $Y_{\nu}^{\geq}(\Glie)_{\beta}$ to $\mathscr{E}_{N,\beta}$.
The sum of the subspaces $\mathscr{E}_{N,\beta}$, for $\beta \in \BQ$, is direct and forms a subalgebra of $\mathrm{End}(N)$ that contains the image of $\varphi$. Let $\mathscr{E}_N$ denote the resulting $\BQ$-graded algebra. Then $(\mathrm{Id}\otimes \varphi)(\CR^-(z))$ belongs to the $z$-completed tensor product $Y_{\mu}(\Glie) \otimes_z \mathscr{E}_N$.  Similar constructions work for $Y_{\nu}^{\leq}(\Glie)$-modules.

The next result follows essentially from \cite[\S 4.2]{GTLW}. We give a slightly different proof without any assumption on the representations.

\begin{prop}  \label{prop: stable uniqueness}
    Let $\mu$ and $\nu$ be two coweights. 
    \begin{itemize}
   \item[(1)] For $\varphi: Y_{\nu}^{\geq}(\Glie) \longrightarrow \mathrm{End}(N)$ a representation,  there exists a unique formal series
    \begin{gather*}
        \mathcal{A}(z) = \sum_{\beta \in \BQ_+} \mathcal{A}_{\beta}(z) \in Y_{\mu}(\Glie) \otimes_z \mathscr{E}_N \quad \textrm{with} \\
        \mathcal{A}_{\beta}(z) \in (Y_{\mu}(\Glie)_{-\beta} \otimes \mathscr{E}_{N,\beta})((z^{-1})) \quad \textrm{for}\quad \beta \in \BQ_+ \quad \textrm{and}\quad \mathcal{A}_0(z) = 1 \otimes \mathrm{Id}
    \end{gather*}
    which satisfies the intertwining equations in $(Y_{\mu}(\Glie)\otimes_z \mathscr{E}_N)((u^{-1}))$ for $i \in I$:
    $$ \mathcal{A}(z) \times (\tau_z \otimes \varphi) \circ \Delta_{\mu,\nu}(\xi_i(u)) = (\xi_i(u-z) \otimes \varphi(\xi_i(u))) \times \mathcal{A}(z). $$
    Such a formal series is precisely $(\mathrm{Id} \otimes \varphi)(\CR^-(z))$.
    \item[(2)] For $\varphi: Y_{\mu}^{\leq}(\Glie) \longrightarrow \mathrm{End} (M)$ a representation, there exists a unique formal series
    \begin{gather*}
        \mathcal{B}(z) = \sum_{\beta \in \BQ_+} S_{\beta}(z) \in \mathscr{E}_M \otimes_z Y_{\nu}(\Glie) \quad \textrm{with} \\
        \mathcal{B}_{\beta}(z) \in (\mathscr{E}_{M,-\beta} \otimes Y_{\mu}(\Glie)_{\beta})((z^{-1})) \quad \textrm{for}\quad \beta \in \BQ_+ \quad \textrm{and}\quad \mathcal{B}_0(z) = \mathrm{Id} \otimes 1
    \end{gather*}
    which satisfies the intertwining equations in $(\mathscr{E}_M \otimes_z Y_{\nu}(\Glie))((u^{-1}))$ for $i \in I$:
    $$ \mathcal{B}(z) \times (\varphi \otimes \tau_z)\circ \Delta_{\mu,\nu}(\xi_i(u)) = (\varphi(\xi_i(u)) \otimes \xi_i(u-z)) \times \mathcal{B}(z). $$
    Such a formal series is precisely $(\varphi \otimes \mathrm{Id})(\CR^-(-z))$.
    \end{itemize}
\end{prop}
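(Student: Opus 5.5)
Existence is immediate from Proposition \ref{prop: R- intertwining}: apply $\mathrm{Id}\otimes\varphi$ to its first intertwining equation. This is legitimate because Lemma \ref{lem: Yangian coproduct estimation} gives $(\tau_z\otimes\mathrm{Id})\circ\Delta_{\mu,\nu}(\xi_i(u))\in Y_\mu(\Glie)\otimes Y_\nu^{\geq}(\Glie)$ (coideal property), and the second tensor factor of $\CR^-(z)$ lies in $Y_\nu^+(\Glie)$. The image $(\mathrm{Id}\otimes\varphi)(\CR^-(z))$ then lies in $Y_\mu(\Glie)\otimes_z\mathscr{E}_N$ with the required unitriangular shape. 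So the content is uniqueness, and Part (2) is symmetric, using the second equation of Proposition \ref{prop: R- intertwining} with $z\mapsto -z$.

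For uniqueness, let $\mathcal{A}(z),\mathcal{A}'(z)$ be two solutions and set $\mathcal{C}(z):=\mathcal{A}(z)^{-1}\mathcal{A}'(z)$, again unitriangular. It commutes with $X_i(u):=(\tau_z\otimes\varphi)\Delta_{\mu,\nu}(\xi_i(u))$ for all $i$. We argue that $\mathcal{C}_\beta=0$ for $\beta\neq0$ by induction on the height of $\beta$. Suppose $\mathcal{C}_\gamma=0$ for $0<\mathrm{ht}(\gamma)<\mathrm{ht}(\beta)$. We project the commutation relation onto weight $(-\beta,\beta)$ and use the estimate of Lemma \ref{lem: Yangian coproduct estimation}: $X_i(u)\equiv\xi_i(u-z)\otimes\varphi(\xi_i(u))$ modulo terms in $Y_\mu^{\leq}(\Glie)_{-\gamma}\otimes(\cdot)_\gamma$ with $\gamma\neq 0$. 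Since the lower components of $\mathcal{C}$ vanish, this yields
$$[\mathcal{C}_\beta(z),\ \xi_i(u-z)\otimes\varphi(\xi_i(u))]=0\quad\text{for all }i.$$
Only the leading (diagonal) part of $X_i$ survives, because multiplying the off-diagonal parts by $\mathcal{C}_0=1\otimes\mathrm{Id}$ appears identically on both sides.

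Next, extract the coefficient of $u^{-1}$ in $\bar\xi_i$, i.e.\ the first nontrivial coefficient of $\log$ of both factors. On $Y_\mu(\Glie)_{-\beta}\otimes\mathscr{E}_{N,\beta}$ it acts by conjugation through $\xi_{i,-m_i}\otimes1+1\otimes\varphi(\xi_{i,-n_i})$, shifted by $z$-multiples of scalars. Conjugation by the weight operators gives zero on this weight space, so no information comes from here. Taking the next coefficient of $\log$, however, produces a term of the form $z\cdot(\text{weight})$. Expanding $\log\xi_i(u-z)=\log\xi_i(u)+\log(1-z/u)^{\dots}$ and using translation, the $u^{-2}$ coefficient gives
$$[\mathcal{C}_\beta,\ h\otimes1+1\otimes h']+z\,c\,\mathcal{C}_\beta=0,$$
where $c=\pm(\alpha_i,\beta)$ comes from $\mathrm{ad}(\xi_{i,-m_i})$ acting on the first factor of $\mathcal{C}_\beta$, and $h,h'$ are degree-one Cartan elements. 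Since $\mathcal{C}_\beta\in(\cdots)((z^{-1}))$, we compare leading $z$-degrees: if $d$ is the top degree of $\mathcal{C}_\beta$, the $z^{d+1}$ coefficient gives $(\alpha_i,\beta)\,\mathcal{C}_{\beta,d}=0$. Because $\beta\neq0$, some $i$ has $(\alpha_i,\beta)\neq0$, forcing $\mathcal{C}_\beta=0$.

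The main obstacle is this extraction step. Precisely, we must verify that the second log-coefficient of $\xi_i(u-z)$ contributes exactly $z\cdot\xi_{i,-m_i}$ up to scalars, so that the $\mathrm{ad}$ term is genuinely $z(\alpha_i,\beta)$ times $\mathcal{C}_\beta$ and is not cancelled by the $\varphi$-side. The point is that the $\varphi$-side is $z$-independent, hence cannot cancel a top-degree $z$ term. Handling the precise normalization through $\overline{\xi}_i$ and the shifts $m_i$ is routine.
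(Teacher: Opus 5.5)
Your proposal is correct and follows essentially the paper's argument: induct on $\mathrm{ht}(\beta)$, isolate the second Drinfeld--Cartan coefficient, where the spectral shift $\tau_z$ produces the term $(\alpha_i,\beta)\,z\,\mathcal{C}_\beta(z)$, and compare top $z$-degrees (this is the paper's decreasing induction on $n$). The differences are cosmetic. The paper uses the difference of two solutions (with vanishing $0$-component) and the explicit formula for $\Delta_{\mu,\nu}(\xi_{i,s_i+t_i+1})$, where you use the ratio $\mathcal{A}(z)^{-1}\mathcal{A}'(z)$, the coproduct estimation and logarithms. One thing to fix in your write-up: the $z\,\xi_{i,-m_i}$ term comes from translating $\log\overline{\xi}_i(u)=\sum_{k\geq 1}\ell_k u^{-k}$, whose $u^{-2}$-coefficient becomes $\ell_2+z\ell_1$, and not from the scalar factor $(1-z/u)^{m_i}$ that your displayed expansion suggests, since that factor commutes with everything.
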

\begin{proof}
We only prove Part (1); the same idea works for Part (2).
    Clearly, the power series $(\mathrm{Id} \otimes \varphi)(\CR^-(z))$ is unitriangular and satisfies the intertwining property as a consequence of the first intertwining property of Proposition \ref{prop: R- intertwining}. For uniqueness, let $A(z)$ be an element in $Y_{\mu}(\Glie) \otimes_z \mathscr{E}_N$ with the same weight decomposition and intertwining property as $\mathcal{A}(z)$, but with the initial condition $A_0(z) = 0$. It suffices to show that $A_{\beta}(z) = 0$ for $\beta \in \BQ_+$. We proceed by induction on the height of $\beta$ defined as $\mathrm{ht}(\beta) :=\langle \sum_{i\in I} \varpi_i^{\vee}, \beta \rangle \in \BN$. 

   For $i \in I$, let $s_i = -\langle \mu, \alpha_i\rangle$ and $t_i := -\langle \nu, \alpha_i\rangle$. Define also
\begin{equation*}  
    g_i^{\mu,\nu} := 1 \otimes \xi_{i,t_i+1} + \xi_{i,s_i+1} \otimes 1 + \xi_{i,s_i} \otimes \xi_{i,t_i} \in Y_{\mu}^0(\Glie)\otimes Y_{\nu}^0(\Glie). 
\end{equation*}
This is precisely the coefficient of $u^{-s_i-t_i-2}$ in $\xi_i(u) \otimes \xi_i(u)$. We have
$$ \Delta_{\mu,\nu}(\xi_{i,s_i+t_i+1}) = g_i^{\mu,\nu} - \sum_{\gamma \in \Phi} (\alpha_i,\gamma) x_{\gamma}^-\otimes x_{\gamma}^+, $$
and $(\tau_z \otimes \varphi)(g_i^{\mu,\nu})$ is the following polynomial in $z$ with coefficients in $Y_{\mu}(\Glie) \otimes \mathscr{E}_N$: 
   $$ 1 \otimes \varphi(\xi_{i,t_i+1}) + \left(\xi_{i,s_i+1} + (s_i+1)z \xi_{i,s_i}+\frac{s_i(s_i+1)}{2}z^2\right)\otimes \mathrm{Id} + (\xi_{i,s_i} + s_iz) \otimes \varphi(\xi_{i,t_i}).  $$
   In the $i$th intertwining equation for $A(z)$, we take the coefficient of $u^{-s_i-t_i-2}$ at both sides and then project to their $(-\beta,\beta)$-components. In view of the commutativity of $A(z)$ and $\xi_{i,s_i} \otimes \mathrm{Id} + 1 \otimes \varphi(\xi_{i,t_i})$, this results in:
   \begin{equation*}
   \begin{split}
        [A_{\beta}(z), 1 \otimes \varphi(\xi_{i,t_i+1}) + \xi_{i,s_i+1} \otimes \mathrm{Id} + \xi_{i,s_i} \otimes \varphi(\xi_{i,t_i})] + (\alpha_i,\beta)z A_{\beta}(z) \\
        = \sum_{\gamma \in \Phi} (\alpha_i,\gamma) A_{\beta-\gamma}(z) (x_{\gamma}^- \otimes \varphi(x_{\gamma}^+)).
   \end{split}          
   \end{equation*}
   Here we set $A_{\beta-\gamma}(z) = 0$ if $\beta - \gamma \notin \BQ_+$. Since $\mathrm{ht}(\beta-\gamma) < \mathrm{ht}(\beta)$, we apply the induction hypothesis to get that the right-hand side is zero. 
   For $n \in \BZ$, let $A_{\beta,n}$ denote the coefficient of $z^n$ in the Laurent series $A_{\beta}(z)$. Then $A_{\beta,n} = 0$ for $n \gg 0$, and the above equation becomes
   $$ [A_{\beta,n}, 1 \otimes \varphi(\xi_{i,t_i+1}) + \xi_{i,s_i+1} \otimes \mathrm{Id} + \xi_{i,s_i} \otimes \varphi(\xi_{i,t_i})] + (\alpha_i,\beta) A_{\beta,n-1} = 0 \quad \textrm{for $i \in I$ and $n \in \BZ$}. $$
   Assume that $A_{\beta,n} = 0$. Then $(\alpha_i,\beta) A_{\beta,n-1} = 0$ for all $i \in I$. Since $\beta \neq 0$, there exists $i \in I$ such that $(\alpha_i,\beta) \neq 0$. It follows that $A_{\beta,n-1} = 0$. Another decreasing induction on $n$ shows that $A_{\beta,n} = 0$ for all $n \in \BZ$. We conclude that $A_{\beta}(z) = 0$. 
\end{proof}

\section{Conjugation formulas for shifted Yangians} \label{se: conjugation formula shifted Yangians}
In this section, we establish conjugation formulas for the evaluations of the unitriangular R-matrix at a large family of representations of shifted Yangians.

Recall from the co-ideal subalgebra property of Lemma \ref{lem: Yangian coproduct estimation} that the tensor product of a $Y_{\mu}(\Glie)$-module with a $Y_{\nu}^{\geq}(\Glie)$-module is naturally a $Y_{\mu+\nu}^{\geq}(\Glie)$-module.
\begin{lem} \label{lem: trivilization}
    Let $\nu$ be a coweight and $N$ be a finite-dimensional $Y_{\nu}^{\geq}(\Glie)$-module. Then, there exists a one-dimensional module $K$ over another shifted Yangian $Y_{\epsilon}(\Glie)$ such that the tensor product $K\otimes N$ is a trivial $Y_{\epsilon+\nu}^{\geq}(\Glie)$-module. 
\end{lem}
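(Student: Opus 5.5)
Since $K$ is one-dimensional, its structural map $\rho: Y_{\epsilon}(\Glie) \to \BC$ kills every $x_{i,m}^{\pm}$. The trivial-times formula \eqref{equ: trivial times}, restricted to the coideal subalgebra as in Remark \ref{rem: coideal}, then describes the action of $x_i^+(u)$ on $K\otimes N$ as
$$ \langle \rho(\xi_i(u))\, \varphi(x_i^+(u))\rangle_+ = \langle \Bp_i(u)\, \varphi(x_i^+(u))\rangle_+ , $$
where $\varphi: Y_{\nu}^{\geq}(\Glie)\to \mathrm{End}(N)$ is the structural map and $(\Bp_i(u))_{i\in I}$ is the $\ell$-weight of $K$. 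So it suffices to choose monic polynomials $\Bp_i$ for which $\Bp_i(u)\varphi(x_i^+(u))$ has no negative powers of $u$, i.e.\ is a polynomial in $u$. We then take $K = L(\Bp)$ over $Y_{\epsilon}(\Glie)$ with $\epsilon = \sum_i \deg(\Bp_i)\varpi_i^{\vee}$.

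Choosing such $\Bp_i$ is the Yangian analogue of Lemma \ref{lem: quotient qaf}: the series $\varphi(x_i^+(u)) = \sum_{n\ge0}\varphi(x_{i,n}^+)u^{-n-1}$ should be the expansion at $u=\infty$ of an $\mathrm{End}(N)$-valued rational function vanishing at infinity. Then $\Bp_i$ can be taken as a common denominator, and $\Bp_i(u)\varphi(x_i^+(u))$ is a polynomial, whose part $\langle\cdot\rangle_+$ in negative powers is zero.

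The main step is this rationality, and I would prove it directly from the relations of $Y_{\nu}^{\geq}(\Glie)$. Take the relation $[\xi_{i,p+1},x_{i,n}^+]-[\xi_{i,p},x_{i,n+1}^+] = \frac{b_{ii}}{2}(\xi_{i,p}x_{i,n}^+ + x_{i,n}^+\xi_{i,p})$ with $p=-\langle\nu,\alpha_i\rangle$. Then $\xi_{i,p}$ acts on weight spaces by scalars (the weight grading), so it is semisimple with eigenvalues shifting by $b_{ii}$ under $x^+$. Write $X_n = \varphi(x_{i,n}^+)$ and $h=\varphi(\xi_{i,p+1})$, and decompose $\mathrm{End}(N)$ into weight components. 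On the component of weight $\alpha_i$ the relation becomes
$$ X_{n+1} = \lambda^{-1}\left([h,X_n] - \tfrac{b_{ii}}{2}(\xi_{i,p}X_n + X_n \xi_{i,p})\right)\ \text{ up to affine corrections}, $$
with $\lambda = \pm b_{ii} \neq 0$ coming from the commutator of $\xi_{i,p}$ with $X_{n+1}$. Thus $X_{n+1} = L(X_n)$ for a fixed linear operator $L$ on the finite-dimensional space $\mathrm{End}(N)$. Hence $\sum_n X_n u^{-n-1} = (u-L)^{-1}X_0$ is rational, with denominator dividing the characteristic polynomial of $L$. I expect the bookkeeping of this step to be the main obstacle: showing that the shift $x_{i,n}^+\mapsto x_{i,n+1}^+$ is realized by a fixed linear map, in the style of the standard argument for finite-dimensional Yangian modules. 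If necessary I would instead use $\xi_{i,p+1}$ alone and the identity $[\xi_{i,p+1}, x_{i,n}^+] = b_{ii}x_{i,n+1}^+ + (\text{terms in }\xi_{i,p}, x_{i,n}^+)$, obtained from the relation with $p$ replaced by the leading index where $\xi_{i,p}=1$ or is central-like.

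Finally, I would take $\Bp_i$ monic, for example the characteristic polynomial of $L$. By \eqref{equ: trivial times}, $K\otimes N$ then has identically zero $x_{i,n}^+$-action for every $i$, so it is a trivial $Y_{\epsilon+\nu}^{\geq}(\Glie)$-module.
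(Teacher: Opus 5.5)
Your proposal is correct and is essentially the paper's argument. The paper also realizes the shift $x_{i,n}^+\mapsto x_{i,n+1}^+$ as a fixed operator on $\mathrm{End}(N)$, namely $\frac{1}{2d_i}\mathrm{ad}_{\varphi(\xi_{i,p+1}-\frac{1}{2}\xi_{i,p}^2)}$ with $p=-\langle\nu,\alpha_i\rangle$; it uses finite-dimensionality to get a monic linear relation among $\varphi(x_{i,0}^+),\dots,\varphi(x_{i,s}^+)$, propagates it to all $n$, and concludes with \eqref{equ: trivial times}. With this operator the recursion is exact, so there are no ``affine corrections.'' You also do not need semisimplicity of $\varphi(\xi_{i,p})$, which can fail for a general $Y_\nu^{\geq}(\Glie)$-module; you only use $[\varphi(\xi_{i,p}),X_n]=2d_iX_n$, and your characteristic polynomial of $L$ is just one explicit monic relation of this kind (by Cayley--Hamilton).
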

\begin{proof}
 Let $\varphi: Y_{\nu}^{\geq}(\Glie) \longrightarrow \mathrm{End}(N)$ be the structural map of the module $N$. For each $i \in I$, the vectors $\varphi(x_{i,n}^+)$ for $n \in \BN$ are linearly dependent in the finite-dimensional vector space $\mathrm{End}(N)$. There exist coefficients $c_0, c_1, \cdots, c_s \in \BC$ such that $c_s = 1$ and  
    $$ c_0 \varphi(x_{i,0}^+) + c_1 \varphi(x_{i,1}^+) + \cdots + c_s \varphi(x_{i,s}^+) = 0. $$
    Applying repeatedly the following commutation relation in $Y_{\nu}^{\geq}(\Glie)$:
    $$[\xi_{i,-\langle \nu,\alpha_i\rangle+1} - \frac{1}{2} \xi_{i,-\langle \nu,\alpha_i\rangle}^2, x_{i,n}^{\pm}] = \pm 2d_i x_{i,n+1}^{\pm}$$
     we get the following relation for $n \in \BN$,
    $$ c_0 \varphi(x_{i,n}^+) + c_1 \varphi(x_{i,n+1}^+) + \cdots + c_s \varphi(x_{i,n+s}^+) = 0. $$
    This is precisely the coefficient of $u^{-n-1}$ in the product of $\varphi(x_i^+(u))$ with the polynomial $\mathbf{p}_i(u) := c_0+c_1u+\cdots +c_s u^s$, which is monic by our assumption $c_s = 1$.  

We have obtained a tuple $(\mathbf{p}_i(u))_{i\in I}$ of monic polynomials in $u$ such that the power series $\langle \mathbf{p}_i(u) \varphi(x_i^+(u))\rangle_+$ is identically zero for each $i \in I$.
    Let $K$ be the corresponding one-dimensional module over a shifted Yangian $Y_{\epsilon}(\Glie)$.  By Eq.\eqref{equ: trivial times}, in the $Y_{\epsilon+\nu}^{\geq}(\Glie)$-module $K\otimes N$ whose underlying space is identified with $N$, each generating series $x_i^+(u)$, for $i \in I$, acts as $\langle \mathbf{p}_i(u) \varphi(x_i^+(u))\rangle_+ = 0$. Therefore, $K\otimes N$ is a trivial module.
\end{proof}
In the situation of the lemma, we call the one-dimensional module $K$ a {\it left trivializer} of $N$. Similarly, if $M$ is a finite-dimensional $Y_{\mu}^{\leq}(\Glie)$-module, then there exists a one-dimensional module $K$ over a shifted Yangian $Y_{\epsilon}(\Glie)$ such that the tensor product $M\otimes K$ is a trivial $Y_{\mu+\epsilon}^{\leq}(\Glie)$-module. In this case, we call $K$ a {\it right trivializer} of $M$.

\begin{rem}
   Assume $N$ is a module over the full shifted Yangian $Y_{\nu}(\Glie)$. By considering also $x_i^-(u)$ in the proof, we obtain another one-dimensional module $L$ over a shifted Yangian $Y_{\eta}(\Glie)$ such that the triple tensor product $(K\otimes N) \otimes L$ is a trivial module over $Y_{\epsilon+\nu+\eta}(\Glie)$. This gives a representation-theoretic meaning to the generalized Baxter relations \cite[Corollary 4.2]{HZ} in the Grothendieck ring of finite-dimensional representations of shifted Yangians by noticing that the isomorphism class of a finite-dimensional trivial module is a sum of isomorphism classes of one-dimensional modules. 
\end{rem}

For $\mu$ a coweight, recall from Eqs.\eqref{equ: psi-}--\eqref{equ: psi+} the two $\BC[z]$-algebra homomorphisms $\mathrm{C}_{K,z}^{\pm}$ from $Y_{\mu}^{\pm}(\Glie)[z]$ to $Y_{\mu}^{\pm}(\Glie)((z^{-1}))$. They are defined by conjugations with R-matrices associated to a one-dimensional module $K$ of another shifted Yangian. 

Our second main result of the paper is a Yangian analog of the first and third conjugation formulas in Theorem \ref{thm: conjugation formula qaf}.

\begin{theorem}  \label{thm: main result}
    Let $\mu, \epsilon$ and $\nu$ be three coweights and $K$ be a one-dimensional module over the shifted Yangian $Y_{\epsilon}(\Glie)$.
    \begin{itemize}
        \item[(1)] Given a finite-dimensional representation $\varphi: Y_{\nu}^{\geq}(\Glie) \longrightarrow \mathrm{End}(N)$ which is left trivialized by $K$, we have 
    $$ (\mathrm{Id} \otimes \varphi)(\CR^-(z)) = (\mathrm{C}_{K,z}^- \circ \tau_z \otimes \varphi) (\Theta_K^{\mu,\nu}(0)). $$
    \item[(2)] Given a finite-dimensional representation $\varphi: Y_{\mu}^{\leq}(\Glie) \longrightarrow \mathrm{End}(M)$ which is right trivialized by $K$, we have 
    $$ (\varphi \otimes \mathrm{Id})(\CR^-(-z)) = (\varphi \otimes \mathrm{C}_{K,z}^+ \circ \tau_z) (\Theta_K^{\mu,\nu}(0)^{-1}). $$ 
    \end{itemize}
\end{theorem}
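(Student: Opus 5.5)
The plan is to use the uniqueness statement of Proposition \ref{prop: stable uniqueness}: I will show that the right-hand side of (1) is unitriangular and satisfies the defining intertwining equation of $\mathcal{A}(z)$. Part (2) is the mirror argument with right trivializers and Proposition \ref{prop: stable uniqueness}(2).

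\textbf{Unitriangularity.} By Theorem \ref{thm: Theta}(2), $\Theta_{K,\beta}^{\mu,\nu}(0)\in Y_\mu^-(\Glie)_{-\beta}\otimes Y_\nu^+(\Glie)_\beta$, with zero component $1\otimes 1$. Applying $\tau_z$ to the first factor gives a polynomial in $z$. The map $\mathrm{C}^-_{K,z}$ of \eqref{equ: psi-} preserves weight and has values in Laurent series in $z^{-1}$. Hence the right-hand side lies in $Y_\mu(\Glie)\otimes_z\mathscr{E}_N$, has the required weight decomposition, and has zero component $1\otimes\mathrm{Id}$.

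\textbf{Intertwining.} I would realize everything on modules, in the spirit of the proof of Theorem \ref{thm: conjugation formula qaf}. Take $M$ to be the regular representation of $Y_\mu(\Glie)$, deformed by $\tau_z$ and completed to $M^z$. Consider the $Y^{\geq}_{\mu+\epsilon+\nu}(\Glie)$-modules $(M_z\otimes K)\otimes N$ and $M_z\otimes(K\otimes N)$, which make sense by Remark \ref{rem: coideal associativity}. The proof then chains three isomorphisms.

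\begin{enumerate}
\item By the corollary of Proposition \ref{prop: coproduct spectral}, the associator $(\tau_z\otimes\varphi)(\Theta_K(0))$ intertwines these two structures. This is Remark \ref{rem: coideal associativity}(2), with $N$ finite-dimensional.
\item On the source side, $\CR_{K,M}(z)$ of Theorem \ref{thm: one-dim R} identifies $\overline{K\otimes M_z}$ with $\overline{M_z\otimes K}$. Combined with coassociativity of Remark \ref{rem: coideal associativity}(1) (the first factor $K$ is trivial), this turns $(M_z\otimes K)\otimes N$ into $K\otimes(M_z\otimes N)$.
\item On the target side, $K\otimes N$ is a trivial $Y^{\geq}_{\epsilon+\nu}(\Glie)$-module by Lemma \ref{lem: trivilization}. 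By Remark \ref{rem: coideal}, each $\xi_i(u)$ therefore acts on $M_z\otimes(K\otimes N)$ diagonally, as $\xi_i(u-z)\otimes\mathbf{p}_i(u)\varphi(\xi_i(u))$.
\end{enumerate}

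On $K\otimes(M_z\otimes N)$, the series $\xi_i(u)$ acts as $\mathbf{p}_i(u)\,(\tau_z\otimes\varphi)\Delta_{\mu,\nu}(\xi_i(u))$. Composing the chain yields an operator
$$X(z)=(\tau_z\otimes\varphi)(\Theta_K(0))\circ(\CR_{K,M}(z)\otimes\mathrm{Id})$$
satisfying
$$X(z)\,(\tau_z\otimes\varphi)\Delta_{\mu,\nu}(\xi_i(u))=(\xi_i(u-z)\otimes\varphi(\xi_i(u)))\,X(z),$$
after cancelling the scalar $\mathbf{p}_i(u)$. Since $\CR_{K,M}(z)\otimes\mathrm{Id}$ commutes with the Cartan series $\xi_i(u-z)\otimes\varphi(\xi_i(u))$ by \eqref{equ: conjugation T Yangian}, conjugating $X(z)$ by it gives
$$(\mathrm{Ad}^{-1}_{\CR_{K,M}(z)}\circ\tau_z\otimes\varphi)(\Theta_K(0)).$$
Because $\Theta_K(0)$ has first factor in $Y_\mu^-$, this equals $(\mathrm{C}^-_{K,z}\circ\tau_z\otimes\varphi)(\Theta_K(0))$. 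Here I also have to check that the diagonal factor $\mathcal{D}_{K,M}$ and the weight normalisations cancel.

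\textbf{Main obstacle.} The hardest step is making the chain of module isomorphisms rigorous in the completed spaces. I need the one-dimensional R-matrix of Theorem \ref{thm: one-dim R} to be compatible with tensoring by $N$ under the shifted coproducts, that is, with coassociativity involving a deformed factor. I would handle this through Theorem \ref{thm: asso trivial} together with Proposition \ref{prop: coproduct spectral}. I would also track the scalar $\prod_i\mathbf{p}_i(z)^{\langle\varpi_i^\vee,\beta\rangle}$ from $\mathcal{D}_{K,M}$ against the weights of $\Theta_{K,\beta}$ to confirm that no stray factor survives.
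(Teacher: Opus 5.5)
Your proposal is correct and is essentially the paper's proof: uniqueness via Proposition~\ref{prop: stable uniqueness}(1), then the chain $\overline{K\otimes(M_z\otimes N)}=\overline{(K\otimes M_z)\otimes N}\to\overline{(M_z\otimes K)\otimes N}\to\overline{M_z\otimes(K\otimes N)}$ given by $\CR_{K,M}(z)\otimes\mathrm{Id}_N$ followed by $\Theta_K^{\mu,\nu}(0)|_{M_z,N}$, and finally a comparison of the $\xi_i(u)$-actions using that $K$ and $K\otimes N$ are trivial. The remaining slips are cosmetic: passing from $X(z)$ to the right-hand side is left multiplication by $(\CR_{K,M}(z)\otimes\mathrm{Id})^{-1}$, not conjugation; $\mathcal{D}_{K,M}$ needs no separate bookkeeping because $\mathrm{C}^-_{K,z}$ is by definition conjugation by the full $\CR_{K,M}(z)$; and your ``main obstacle'' is only functoriality of tensoring a module isomorphism with $\mathrm{Id}_N$, with $\dim N<\infty$ used to identify the completed space with $M^z\otimes N$.
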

\begin{proof}
    We will mainly prove Part (1), and sketch Part (2) in the end. Let $\mathcal{A}(z)$ denote the right-hand side. We need to show that $\mathcal{A}(z)$ satisfies the unitriangularity and intertwining property of Proposition \ref{prop: stable uniqueness}(1), so that is equal to $(\mathrm{Id} \otimes \varphi)(\CR^-(z))$. 

    First, notice that  $\mathcal{A}(z)$ is the sum over $\beta \in \BQ_+$ of the formal series
$$\mathcal{A}_{\beta}(z) :=  (\mathrm{C}_{K,z}^- \circ \tau_z \otimes \varphi)(\Theta_{K,\beta}^{\mu,\nu}(0)). $$
By Theorem \ref{thm: Theta}, each $\Theta_{K,\beta}^{\mu,\nu}(0)$ lies in the ordinary tensor product $Y_{\mu}^-(\Glie)_{-\beta} \otimes Y_{\nu}^+(\Glie)_{\beta}$. Applying the algebra homomorphism $\mathrm{C}_{K,z}^- \circ \tau_z: Y_{\mu}^-(\Glie) \longrightarrow Y_{\mu}^-(\Glie)((z^{-1}))$ to the first tensor factor, we see that the formal series $\mathcal{A}_{\beta}(z)$ belongs to the tensor product $Y_{\mu}^-(\Glie)_{-\beta}((z^{-1})) \otimes \mathscr{E}_{N,\beta}$ and is therefore a Laurent series in $z^{-1}$ with coefficients in $Y_{\mu}^-(\Glie)_{-\beta} \otimes \mathscr{E}_{N,\beta}$. Together with $\mathcal{A}_0(z) = 1\otimes 1$, we obtain the unitriangularity of $\mathcal{A}(z)$.

 The proof of intertwining property for $\mathcal{A}(z)$ is in the spirit of \cite[Theorem 5.6]{Z2}.
    To simplify notations, let $V$ denote the regular representation of the shifted Yangian $Y_{\mu}(\Glie)$ on itself. Let $V[z]$ be the $Y_{\mu}(\Glie)[z]$-module by scalar extension, and then take its pullback along the algebra homomorphism $\tau_z: Y_{\mu}(\Glie) \longrightarrow Y_{\mu}(\Glie)[z]$. The resulting $Y_{\mu}(\Glie)$-module, denoted by $V_z$, is called a deformed module. 

For the one-dimensional module $K$ over the shifted Yangian $Y_{\epsilon}(\Glie)$, let $(\mathbf{p}_i(u))_{i\in I}$ denote its $\ell$-weight and $\rho: Y_{\epsilon}(\Glie) \longrightarrow \BC$ denote its structural map.

    \medskip 
    
    \noindent {\bf Step 1: completed triple tensor product modules.}
     Consider the triple tensor product module $K \otimes (V_z \otimes N)$ over $Y_{\epsilon+\mu+\nu}^{\geq}(\Glie)$. It can be obtained similarly as the above deformed module in two steps. First, let $(V\otimes N)[z]$ be the $Y_{\mu}(\Glie) \otimes Y_{\nu}^{\geq}(\Glie)[z]$-module by scalar extension. Then, take its pullback along the following algebra homomorphism
   $$ (\rho \otimes \tau_z \otimes \mathrm{Id}) \circ \Delta_{\epsilon(\mu\nu)}: Y_{\epsilon+\mu+\nu}^{\geq}(\Glie) \longrightarrow Y_{\epsilon}(\Glie) \otimes Y_{\mu}(\Glie) \otimes Y_{\nu}^{\geq}(\Glie) \longrightarrow Y_{\mu}(\Glie) \otimes Y_{\nu}^{\geq}(\Glie)[z]. $$
   These two steps can be carried out by replacing the underlying space $V\otimes N[z]$ with the following larger space of Laurent series in $z^{-1}$, based on the weight grading of $Y_{\mu}(\Glie)$:
   $$ \mathscr{X} := \sum_{\beta \in \BQ} (Y_{\mu}(\Glie)_{\beta} \otimes N)((z^{-1})) \subset (Y_{\mu}(\Glie) \otimes N)((z^{-1})). $$
   The resulting $Y_{\epsilon+\mu+\nu}^{\geq}(\Glie)$-module structure on $\mathscr{X}$ is denoted by $\overline{K\otimes (V_z \otimes N)}$ to indicate that it contains the ordinary triple tensor product $K \otimes (V_z \otimes N)$ as a submodule. Similarly, on the same space $\mathscr{X}$, we define the following $Y_{\epsilon+\mu+\nu}^{\geq}(\Glie)$-module structures as completions of the corresponding triple tensor product modules, which make sense because the first two tensor factors are modules over full shifted Yangians:
   $$ \overline{(K \otimes V_z) \otimes N}, \quad \overline{(V_z \otimes K) \otimes N}, \quad \overline{V_z \otimes (K\otimes N)}. $$

\medskip

\noindent {\bf Step 2: module isomorphisms.}  Since the one-dimensional module $K$ is trivial, by Theorem \ref{thm: asso trivial} and Remark \ref{rem: coideal associativity}(1), the two modules $\overline{K\otimes (V_z\otimes N)}$ and $\overline{(K\otimes V_z) \otimes N}$ are the same. Since $N$ is finite-dimensional, $\mathscr{X}$ is identified with the ordinary tensor product space $V^z \otimes N$. By Theorem \ref{thm: one-dim R}, we have a module isomorphism
   $$ \CR_{K,V}(z) \otimes \mathrm{Id}_N: \overline{(K\otimes V_z) \otimes N} \longrightarrow \overline{(V_z \otimes K) \otimes N}. $$
   By Theorem \ref{thm: Theta} and Remark \ref{rem: coideal associativity}(2), the Theta series defines a module isomorphism
   $$ \Theta_K^{\mu,\nu}(0)|_{V_z, N}: \overline{(V_z \otimes K) \otimes N} \longrightarrow \overline{V_z \otimes (K\otimes N)}. $$
  We obtain the following module isomorphism, denoted by $\Phi(z) \in \mathrm{Aut}(\mathscr{X})$:
   $$ \Theta_K^{\mu,\nu}(0)|_{V_z, N} \circ (\CR_{K,V}(z) \otimes \mathrm{Id}_N): \overline{K\otimes (V_z \otimes N)} \longrightarrow \overline{V_z \otimes (K\otimes N)}. $$
      Recall from Theorem \ref{thm: Theta} that the first tensor factor of $\Theta_K^{\mu,\nu}(0)$ lies in $Y_{\mu}^-(\Glie)$, on which the conjugation by $\CR_{K,V}(z)^{-1}$ defines the map $\mathrm{C}_{K,z}^-$ of \eqref{equ: psi-}. This implies
   $$ \Phi(z)  = (\CR_{K,V}(z) \otimes \mathrm{Id}) \circ (\mathrm{Ad}_{\CR_{K,V}(z)}^{-1} \otimes \mathrm{Id})(\Theta_K^{\mu,\nu}(0)|_{V_z,N}) = (\CR_{K,V}(z) \otimes \mathrm{Id}) \circ \mathcal{A}(z). $$
   Since $\CR_{K,V}(z)$ commutes with the action of the Drinfeld--Cartan subalgebra $Y_{\mu}^0(\Glie)$ on $V^z$ by left multiplication, it suffices to prove the same intertwining property of Proposition \ref{prop: stable uniqueness}(1) with $\mathcal{A}(z)$ replaced by $\Phi(z)$ everywhere.
   
\medskip

\noindent{\bf Step 3: actions of Drinfeld--Cartan series.}
   Consider the module isomorphism $$\Phi(z) : \overline{K\otimes (V_z \otimes N)}\longrightarrow \overline{V_z \otimes (K\otimes N)}.$$ 
   We compute the actions of the Drinfeld--Cartan series $\xi_i(u)$ of $Y_{\epsilon+\mu+\nu}^{\geq}(\Glie)$ on both modules as Laurent series in $u^{-1}$ with coefficients in $\mathrm{End}(\mathscr{X})$. Since $K$ is a trivial module, by Remark \ref{rem: coideal}, the action of $\xi_i(u)$ on the first module is $\mathbf{p}_i(u) (\tau_z\otimes \varphi)(\Delta_{\mu,\nu}(\xi_i(u)))$. 

   Since $K\otimes N$ is a trivial $Y_{\epsilon+\nu}^{\geq}(\Glie)$-module, by Remark \ref{rem: coideal}, the action of $\xi_i(u) \in Y_{\mu+\nu+\nu}^{\geq}(\Glie)[[u^{-1}]]$ on $V_z \otimes (K\otimes N)$ is precisely the tensor product of $\xi_i(u) \in Y_{\mu}(\Glie)[[u^{-1}]]$ acting on $V_z$ and $\xi_i(u) \in Y_{\epsilon+\nu}^{\geq}(\Glie)[[u^{-1}]]$ acting on $K\otimes N$. The former is multiplication by $\xi_i(u-z)$. Again by Remark \ref{rem: coideal}, the latter is  $\Bp_i(u) \varphi(\xi_i(u))$ after identifying the underlying spaces $N$ and $K\otimes N$. In summary, the action of $\xi_i(u)$ on $V_z \otimes (K\otimes N)$ and therefore on the second module is $\xi_i(u-z) \otimes \mathbf{p}_i(u)\varphi(\xi_i(u))$.
  
   As a consequence of $\Phi(z)$ being a module morphism, we have 
   $$ \Phi(z) \circ \mathbf{p}_i(u) (\tau_z \otimes \varphi)(\Delta_{\mu,\nu}(\xi_i(u))) = (\xi_i(u-z) \otimes \mathbf{p}_i(u) \varphi(\xi_i(u))) \circ \Phi(z) \in \mathrm{End}(\mathscr{X})((u^{-1})).  $$
   Dividing both sides by the monic polynomial $\mathbf{p}_i(u)$, we get the desired $i$th intertwining equation for $\Phi(z)$ in Proposition \ref{prop: stable uniqueness}(1).

   \medskip

   For Part (2), we modify Steps 1--2 by considering the following completed triple tensor product modules over $Y_{\mu+\nu+\epsilon}^{\leq}(\Glie)$, all defined on the same underlying space:
   $$ \overline{(M \otimes Y_{\nu}(\Glie)_z)\otimes K}, \quad \overline{M \otimes (Y_{\nu}(\Glie)_z \otimes K)}, \quad \overline{M \otimes (K \otimes Y_{\nu}(\Glie)_z)}, \quad \overline{(M\otimes K) \otimes Y_{\nu}(\Glie)_z}. $$
   They are all isomorphic: identify map from the first to second, $\mathrm{Id} \otimes \CR_{K,Y_{\nu}(\Glie)}(z)^{-1}$ from the second to third, and $\Theta_K^{\mu,\nu}(0)^{-1}$ evaluated at $M \otimes Y_{\nu}(\Glie)_z$ from the third to fourth. Their composition defines an isomorphism from the first module to the fourth module which is factorized as follows by Eqs.\eqref{equ: conjugation T Yangian} and \eqref{equ: psi+}:  
   $$ (\mathrm{Id} \otimes \mathcal{R}_{K,Y_{\nu}(\Glie)}(z)^{-1}) \circ (\varphi\otimes \mathrm{C}_{K,z}^+\circ \tau_z)(\Theta_K^{\mu,\nu}(0)^{-1}). $$
  We are reduced to the intertwining equations of Proposition \ref{prop: stable uniqueness}(2) for the composition. This can be done by almost the same arguments as in Step 3.
\end{proof}
Based on Remark \ref{rem: Theta Omega} and Proposition \ref{prop: R- intertwining}, we can prove Theorem \ref{thm: coproduct shifted Yangian} purely algebraically as Theorem \ref{thm: conjugation formula qaf} by introducing suitable quotients of $Y^{\pm}(\Glie)$. The above proof is representation-theoretical, and it does not work for shifted quantum affine algebras \cite{FT} because their Drinfeld--Jimbo coproduct is unknown beyond type A.
\begin{example}
This is a Yangian analog of Example \ref{ex: Ding-Frenkel}. Let $\Glie = \mathfrak{sl}_{r+1}$. On the vector space $\BC^{r+1}$ there is a representation $\varphi$ of the Yangian $Y(\Glie)$ given by
$$ \varphi(x_{i,n}^+) = (\frac{i}{2})^n E_{i,i+1}, \quad \varphi(x_{i,n}^-) = (\frac{i}{2})^n E_{i+1,i} \quad \textrm{for $1\leq i \leq r$.} $$
It is left and right trivialized  by the one-dimensional module $K$ of $\ell$-weight $(u-\frac{i}{2})_{1\leq i \leq r}$. We argue as in Example \ref{ex: Theta qaf} to get $\Theta_{K,\alpha_i}(z) = \Theta_{i,\alpha_i}(z+\frac{i}{2})$ and
\begin{align*}
    (\mathrm{Id}\otimes \varphi)(\CR^-(z)) &= (\mathrm{C}_{K,z}^- \circ \tau_z \otimes \varphi)(x_{i,0}^- \otimes x_{i,0}^+) = - x_i^-(\frac{i}{2}-z) \otimes  E_{i,i+1}, \\
    (\varphi \otimes \mathrm{Id})(\CR^-(-z)) &= (\varphi \otimes \mathrm{C}_{K,z}^+ \circ \tau_z)(-x_{i,0}^- \otimes x_{i,0}^+) = E_{i+1,i} \otimes x_i^+(\frac{i}{2}-z).
\end{align*}
The generating series $x_i^{\pm}(z-\frac{i}{2})$ are recovered as off-diagonal entries of the unitriangular L-operators associated with $\rho$. This agrees with the Yangian analog of the Ding--Frenkel homomorphism \cite[(5.26)--(5.27)]{BK}. In view of the R-matrix realization of the Yangian \cite[Theorem 6.2]{Wendlandt}, the two unitriangular matrices of the Gauss decomposition in \cite[(5.1)]{BK} are $(\mathrm{Id}\otimes \varphi)(\CR^-(-z))$ and $(\varphi\otimes \mathrm{Id})(\CR^-(z))^{-1}$.
\end{example}

We conclude this section with rationality of evaluations of $\CR^-(z)$ at finite-dimensional representations. It is a shifted version of \cite[Theorem 4.1(3), first point]{GTLW}.

\begin{prop}  \label{prop: rationality R-}
    Let $\varphi: Y_{\mu}^{\leq}(\Glie) \longrightarrow \mathrm{End}(M)$ and $\psi: Y_{\nu}^{\geq}(\Glie) \longrightarrow \mathrm{End}(N)$ be two finite-dimensional representations. Then the power series $(\varphi \otimes \psi)(\CR^-(z))$ is the Taylor expansion around $z=\infty$ of an $\mathrm{End}(M\otimes N)$-valued rational function. Moreover, each pole of this rational function is necessarily of the form $b-a$ where $a$ is a pole of $\varphi(x_i^-(u))$ and $b$ is a pole of $\psi(x_i^+(u))$ for certain $i \in I$.
\end{prop}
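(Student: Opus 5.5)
We follow the proof of Proposition \ref{prop: rationality qaf}, with Theorem \ref{thm: main result}(1) replacing the first conjugation formula of Theorem \ref{thm: conjugation formula qaf}.

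\textbf{Step 1: the conjugation formula.} By Lemma \ref{lem: trivilization}, $\psi$ admits a left trivializer $K$, a one-dimensional $Y_{\epsilon}(\Glie)$-module. We choose its $\ell$-weight $\Bp_i(u)$ to be the monic minimal polynomial annihilating $\langle \Bp_i(u)\psi(x_i^+(u))\rangle_+$. Since $N$ is finite-dimensional, this is the denominator of the rational function $\psi(x_i^+(u))$, so every root of $\Bp_i$ is a pole $b$ of $\psi(x_i^+(u))$. Theorem \ref{thm: main result}(1), followed by applying $\varphi$ to the first tensor factor, gives
$$ (\varphi\otimes\psi)(\CR^-(z)) = \sum_{\beta\in\BQ_+} (\varphi\circ \mathrm{C}_{K,z}^-\circ\tau_z\otimes\psi)(\Theta_{K,\beta}^{\mu,\nu}(0)). $$
Because $\psi$ is finite-dimensional, only finitely many weights $\beta$ contribute a nonzero term. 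Each $\Theta_{K,\beta}(0)$ lies in the ordinary tensor product $Y_{\mu}^-(\Glie)_{-\beta}\otimes Y_{\nu}^+(\Glie)_{\beta}$. So the right-hand side is a finite sum of products of elements $\varphi(\mathrm{C}_{K,z}^-\tau_z(x_{i,n}^-))\otimes \psi(x_{j,m}^+)$. It therefore suffices to show that each $\varphi(\mathrm{C}_{K,z}^-\tau_z(x_{i,n}^-))$ is the expansion at $z=\infty$ of a rational function whose poles have the form $b-a$.

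\textbf{Step 2: reduction to a generating series.} By \eqref{equ: psi-}, we have $\mathrm{C}_{K,z}^-(x_{i,k}^-) = \Bp_i(z+\sigma_i^-)^{-1}(x_{i,k}^-)$, expanded in $z^{-1}$. We take the partial fraction decomposition
$$\frac{1}{\Bp_i(z+\sigma)}=\sum_{(b,m)} \frac{\lambda_{b,m}}{(z+\sigma-b)^{m+1}}.$$
Each term is obtained from $(z-b+\sigma)^{-1}$ by $\partial_z^m$ up to a constant. Moreover,
$$(z-b+\sigma_i^-)^{-1}(x_{i,k}^-)=\sum_{s\ge0}(-1)^s x_{i,k+s}^-(z-b)^{-s-1}.$$
Up to finitely many correction terms and a sign, this is $(z-b)^{k}$ times the tail of $x_i^-(u)$ at $u=b-z$. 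Composing with $\tau_z$, which replaces $x_i^-(u)$ by $x_i^-(u-z)$, only reorganizes this into finite combinations of shifted coefficients.

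The cleaner route is to work directly with generating series. From \eqref{equ: psi-} and the formula $\tau_z(x_i^-(u)) = x_i^-(u-z)$, I expect
$$\mathrm{C}^-_{K,z}\tau_z(x_i^-(u))=\Big\langle \frac{1}{\Bp_i(z+\cdot)}\Big\rangle \text{-twisted } x_i^-(u-z).$$
Concretely, its coefficients are finite combinations of derivatives in $z$ of $x_i^-(b-z)$, where $b$ runs over the roots of $\Bp_i$.

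\textbf{Step 3: rationality and poles.} By the Yangian analogue of Lemma \ref{lem: quotient qaf}(1), $\varphi(x_i^-(u))$ is a rational function of $u$ vanishing at $\infty$ whose poles are the poles $a$. Hence $\varphi(x_i^-(b-z))$ and its $z$-derivatives are rational functions of $z$ with poles at $z=b-a$. The remaining step is to check that the expansion in $z^{-1}$ coming from $\mathrm{C}^-_{K,z}$ agrees with the expansion of this rational function at $z=\infty$; this holds because both expansions converge in $z^{-1}$.

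\textbf{Main obstacle.} The hard part is the bookkeeping in Step 2. One must match $\mathrm{C}_{K,z}^-\circ\tau_z$ applied to $x_{i,n}^-$ exactly with derivatives of $x_i^-(b-z)$, including the binomial shifts coming from $\tau_z$ and possible polynomial parts. I would first verify this for $\Bp_i(u)=u-b$ and $n=0$, where the computation reduces to the identity
$$\sum_s (-1)^s x_{i,s}^-(z-b)^{-s-1}=-x_i^-(b-z).$$
The general case then follows from multiplicativity and differentiation.
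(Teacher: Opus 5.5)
Your proposal is correct and follows essentially the paper's own proof. You take the left trivializer $K$ whose $\ell$-weight is the denominators of $\psi(x_i^+(u))$, apply Theorem~\ref{thm: main result}(1), and use partial fractions to reduce to $\frac{1}{z-b+\sigma_i^-}(x_{i,k}^-)$. You correctly identify this, up to sign and finitely many polynomial corrections, with $(b-z)^k$ times the tail of $x_i^-(b-z)$, which is exactly the paper's displayed identity. The speculative ``cleaner route'' paragraph is unnecessary: $\BC[z]$-linearity of $\mathrm{C}_{K,z}^-$ already reduces $\tau_z(x_{i,n}^-)$ to the $x_{i,k}^-$ with $k\le n$. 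The bookkeeping you flag as the main obstacle is just the one-line computation you already carried out.
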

\begin{proof}
    The idea is almost the same as  Proposition \ref{prop: rationality qaf}. For $i \in I$, by Lemma \ref{lem: trivilization}, the power series $\psi(x_i^+(u))$ is rational and admits a unique denominator $\Bp_i(u) \in \BC[u]$ as a monic polynomial. Let $K$ be the one-dimensional module of $\ell$-weight $(\Bp_i(u))_{i\in I}$, so that it is a left trivializer of $\psi$. Then, by Theorem \ref{thm: main result}(1) and polynomiality of Theta series and, it is enough to prove the rationality of $\varphi\circ \mathrm{C}_{K,z}^- (x_{i,m}^-)$ for all $i \in I$ and $m \in \BN$. If $\Bp_i(u) = 1$, this is trivial. Otherwise, take the partial fraction decomposition
    $$ \frac{1}{\Bp_i(u)} = \sum_{(b,n) \in \Lambda} \frac{\lambda_{b,n}}{(u-b)^{n+1}} $$
    where $\Lambda \subset \BC\times \BN$ is finite and $\lambda_{b,n} \in \BC^{\times}$ for $(b,n) \in \Lambda$. We have
    \begin{align*}
        \mathrm{C}_{K,z}^-(x_{i,m}^-) &= \frac{1}{\Bp_i(z+\sigma_i^-)}(x_{i,m}^-) = \sum_{(b,n)\in \Lambda} \frac{\lambda_{b,n}}{(z+\sigma_i^--b)^{n+1}}(x_{i,m}^-) \\
        &= \sum_{(b,n)\in \Lambda} \frac{(-1)^n \lambda_{b,n}}{n!} \partial_z^n \left(\frac{1}{z-b+\sigma_i^-} (x_{i,m}^-) \right) \\
        &= \sum_{(b,n)\in \Lambda} \frac{(-1)^{n+1}\lambda_{b,n}}{n!} \partial_z^n\left((b-z)^m x_i^-(b-z) - \sum_{k=0}^{m-1} x_{i,k}^-(b-z)^{m-k-1} \right).
    \end{align*}
   Then conclude from the rationality of $\varphi(x_i^-(z))$ in Lemma \ref{lem: trivilization}.
\end{proof}

Each $Y_{\nu}(\Glie)$-module in the category $\mathcal{O}_{\nu}$ of \cite[\S 3.3]{HZ} is a union of finite-dimensional sub-$Y_{\nu}^{\geq}(\Glie)$-modules, so Theorem \ref{thm: main result} and Proposition \ref{prop: rationality R-} are applicable.

\end{document}